\newcommand{\jap}[1]{\langle #1 \rangle}
\def\a{\alpha}
\def\b{\beta}
\def\c{\gamma}
\def\d{\delta}
\def\e{\varepsilon}
\def\f{\varphi}
\def\g{\psi}
\def\k{\kappa}
\def\l{\lambda}
\def\m{\mu}
\def\s{\sigma}
\def\x{\xi}
\def\y{\eta}
\renewcommand{\L}{\Lambda}
\def\re{\mathbb{R}}
\def\ze{\mathbb{Z}}
\def\T{\mathbb{T}}
\def\pa{\partial}
\renewcommand{\Re}{\text{{\rm Re}\;}}
\renewcommand{\Im}{\text{{\rm Im}\;}}
\newcommand{\supp}{\text{{\rm supp}\;}}
\newcommand{\Ker}{\text{{\rm Ker}\;}}
\newcommand{\Ran}{\text{{\rm Ran}\;}}
\newcommand{\sgn}{\mathrm{sgn}}
\newtheorem{thm}{Theorem}[section]
\newtheorem{lem}[thm]{Lemma}
\newtheorem{prop}[thm]{Proposition}
\newtheorem{cor}[thm]{Corollary}
\theoremstyle{definition}
\newtheorem{ass}{Assumption}
\newtheorem{example}{Example}
\theoremstyle{remark}
\newtheorem{rem}[thm]{Remark}
\title{Limiting absorption principle on $L^p$-spaces and scattering theory}
\author{Kouichi Taira}
\address{Graduate School of Mathematical Sciences, University of Tokyo, 3-8-1 Komaba, Meguroku, Tokyo, Japan 153-8914}
\email{taira@ms.u-tokyo.ac.jp}
\subjclass[2010]{Primary 47A10, Secndary 47A40}
\keywords{discrete Schr\"odinger operators, resolvents, limiting absorption principle}
\begin{document}

\maketitle

\begin{abstract}
In this paper, we study the mapping property form $L^p$ to $L^q$ of the resolvent of the Fourier multipliers and scattering theory of generalized Schr\"odinger operators. Though the first half of the subject is studied in \cite{C2}, we extend their result to away from the duality line and we also study the H\"older continuity of the resolvent.
\end{abstract}

\section{Introduction}

In this note, we study $L^p$-estimates for resolvents of the Fourier multipliers and the scattering theory of the discrete Schr\"odinger operator, the fractional Schr\"odinger operators and the Dirac operators.

One of the interest in the scattering theory of the Schr\"odinger operator is to prove the asymptotic completeness of the wave operators:
\begin{align*}
W_{\pm}=s-\lim_{t\to \pm \infty}e^{it(-\Delta+V)}e^{-it(-\Delta)},
\end{align*}
i.e. that $W_{\pm}$ are surjections onto the absolutely continuous subspace of $L^2(\re^d)$. Through the Kato's smooth perturbation theory, the asymptotic completeness of the wave operators is closely related to the limit absorption principle:
\begin{align}
\sup_{z\in I_{\pm}\setminus I}\||V|^{\frac{1}{2}}(-\Delta-z)^{-1}|V|^{\frac{1}{2}}\|_{B(L^2(\re^d))}<&\infty, \label{lim}\\
\sup_{z\in I_{\pm}\setminus I}\||V|^{\frac{1}{2}}(-\Delta+V-z)^{-1}|V|^{\frac{1}{2}}\|_{B(L^2(\re^d))}<&\infty, \label{lim2} 
\end{align}
where $I\subset (0,\infty)$ is an interval and $I_{\pm}=\{z\in \mathbb{C}\mid \pm \Im z\geq 0\}$ and $V$ is a real-valued function. A strong tool for proving $(\ref{lim})$ and $(\ref{lim2})$ is the Mourre theory \cite{M}, which gives sufficient conditions that $(\ref{lim})$ and $(\ref{lim2})$ hold.

On the other hands, Kenig, Ruiz and Sogge \cite{KRS} establish the $L^p$-type limiting absorption principle for the free Schr\"odinger operator:
\begin{align}\label{L^plim}
\|(-\Delta-z)^{-1}\|_{B(L^p(\re^d), L^{q}(\re^d))}\leq C_{p,q}|z|^{\frac{d}{2}(\frac{1}{p}-\frac{1}{q})-1},\,\, z\in \mathbb{C}\setminus [0,\infty),\,\, d\geq 3
\end{align}
where $C_{p,q}>0$ is independent of $z\in \mathbb{C}\setminus [0,\infty)$ and $(1/p,1/q)\in (0,1)\times (0,1)$ satisfies $2/(d+1)\leq 1/p-1/q\leq 2/d$, $(d+1)/2d<1/p$ and $1/q<(d-1)/(2d)$. $(\ref{L^plim})$ is also proved by Kato and Yajima \cite{KY} independently when $1/p+1/q=1$, and applied to the scattering theory of the Schr\"odinger operator $-\Delta+V$, where $V\in L^{p}(\re^d)$, $d/2\leq p<(d+1)/2$ is real-valued. Note that $(\ref{lim})$ for $V\in L^p(\re^d)$ for $d/2\leq p\leq (d+1)/2$ follow from $(\ref{L^plim})$ and H\"older's inequality.
Goldberg and Schlag \cite{GS} proved the  $L^p$-type limiting absorption principle for Schr\"odinger operator $-\Delta+V$ with a real-valued potential $V\in L^{r}(\re^d)\cap L^{3/2}(\re^d)$, $r>3/2$:
\begin{align*}
\sup_{\Re z\geq \l_0, 0<\pm \Im z\leq 1}\|(-\Delta+V-z )^{-1}\|_{B(L^p(\re^d), L^q(\re^d))}\leq C(\Re z)^{\frac{d}{2}(\frac{1}{p}-\frac{1}{q})-1},
\end{align*}
where $\l_0>0$, $d=3$, $p=4/3$ and $q=4$. The strategy of the proof in \cite{GS} is to replace the $L^2$-trace theorem in the proof of the classical Agmon-Kato-Kuroda theorem \cite[Theorem XIII. 33]{RS} by Stein-Tomas $L^p$-restriction theorem for the sphere \cite{T}.
Ionescu and Schlag \cite{IS} extends the result of \cite{GS} to a large class of potentials $V$, which contains $L^{p}(\re^d)$, $d/2\leq p\leq (d+1)/2$, the global Kato class potentials and some perturbations of first order operators. See also the recent works by Huang, Yao, Zheng \cite{HYZ} and Mizutani \cite{M}. Moreover, in \cite{IS}, it is also proved that existence and asymptotic completeness of the wave operators. We note that there are no positive eigenvalues of $-\Delta+V$  when $V\in L^{p}(\re^d)$, $d/2\leq p\leq (d+1)/2$ and it is false if $p>(d+1)/2$ (\cite{IJ} and \cite{KoT}).

In this paper, for a large class of operators $T(D)$ on $X^d$, we study uniform resolvent estimates, H\"older continuity of the resolvent and Carleman type inequalities for Fourier multipliers on $X^d$, where $X=\re$ or $X=\ze$. The uniform resolvent estimates for a Fourier multipliers are investigated in \cite{C2} and \cite{C3} in the duality line when $X=\re$ in order to study the Lieb-Thirring type bounds for fractional Schr\"odinger operators and Dirac operators.  One of the purpose is to prove the uniform resolvent estimates away form the duality line and to extend to the case of $X=\ze$. To prove this, we follow the argument in \cite[Appendix]{Gu} for the Laplacian on the Euclidean space, however, the argument in \cite{Gu} does not cover the general case since in the proof of \cite[Theorem 6]{Gu}, the spherical symmetry and the Stein-Tomas theorem for the sphere are crucial. Moreover, we study the scattering theory of the discrete Schr\"odinger operator, the fractional Schr\"odinger operators and the Dirac operators. We note that the limiting absorption principle for free discrete Schr\"odinger operators is studied in \cite{IK}, \cite{KM} and \cite{TT}. In \cite{KM}, the scattering theory of the discrete Schr\"odinger operators perturbed by $L^{p}$-potentials are studied for a range of $p$. In \cite{TT}, it is proved that the range of $(p,q)$ which the uniform resolvent estimate holds for the discrete Schr\"odinger operators differs from the one for the continuous Schr\"odinger operators when $d\geq 5$.

We remark that almost all results in this paper can be extended to the Lorentz space $L^{p,r}$ by real interpolation. For simplicity we do not mention this below.

Throughout this paper, we denote $X^d=\ze^d$ or $\re^d$ for an integer $d\geq 2$. We denote $\m$ by the Lebesgue measure if $X^d=\re^d$ by the counting measure if $X^d=\ze^d$. Moreover, we write $\widehat{X^d}=\re^d$ if $X^d=\re^d$ and $\widehat{X^d}=\T^d=(\re/\ze)^d$ if $X^d=\ze^d$.  We often use $[-1/2,1/2)^d\subset \re^d$ as a fundamental domain of $\T^d$.

Let $T\in C^{\infty}(\widehat{X^d},\re)$. Moreover, we assume $T\in \mathcal{S}'(\re^d)$ if $X=\re$. We denote the set of all critical values of $T$ by $\Lambda_{c}(T)$ and set $M_{\l}=\{\x\in \widehat{X^d}\mid T(\x)=\l\}$ for $\l\in \re$.  We denote the induced surface measure by $\m_{\l}$ away from the critical points of $T$. Moreover, for $I\subset \re$, we write $I_{\pm}=\{z\in \mathbb{C}\mid \Re z\in I,\, \pm\Im z\geq 0\}$.

Set 
\begin{align}\label{prange}
S_k=\{(\frac{1}{p}, \frac{1}{q})\in [0,1]\times [0,1]\mid \frac{1}{q}\leq \frac{1}{p}-\frac{1}{k+1},\, \frac{1+k}{1+2k}<\frac{1}{p},\, \frac{1}{q}<\frac{k}{1+2k}\}.
\end{align}

\begin{ass}\label{assa}
Let $U\subset \widehat{X^{d}}$ be a relativity compact open set and  $I\subset \re$ be an compact interval. Suppose $\pa_{\x}T(\x)\neq 0$ for $\x\in \bar{U}$.
The Fourier transform of the induced surface measure satisfies the following estimate: For any $\chi\in C_c^{\infty}(\widehat{X^d})$ supported in $U$, there exists $C>0$ such that
\begin{align}\label{surmes}
|\int_{M_{\l}}e^{2\pi ix\cdot\x}\chi(\x)d\m_{\l}(\x)|\leq C(1+|x|)^{-k},\,\, x\in X^d, \l\in I.
\end{align}
\end{ass}
\begin{rem}
If $\pa_{\x_d}T\neq 0$ on $\supp \chi$ and $\supp \chi$ is small enough, $(\ref{surmes})$ is rewritten as
\begin{align*}
|\int_{\widehat{X^{d-1}}}e^{2\pi i(x'\cdot\x'+x_dh_{\l}(\x'))}\chi(\x',h_{\l}(\x'))d\x'|\leq C'(1+|x|)^{-k},\,\, x\in X^d, \l\in I
\end{align*}
where $\x=(\x',\x_d)$ and $M_{\l}=\{(\x',\x_d)\in \widehat{X^{d}}\mid \x_d=h_{\l}(\x')\}$. Moreover, if $(\ref{surmes})$ holds, then there exits $N\geq 0$ such that
\begin{align*}
|\int_{\widehat{X^{d-1}}}e^{2\pi i(x'\cdot\x'+x_dh_{\l}(\x'))}b(\x')d\x'|\leq C\sum_{|\a|\leq N}\sup_{\x'\in \widehat{X^{d-1}}} |\pa_{\x'}^{\a}b(\x')|
\end{align*}
where $b\in C_c^{\infty}(\widehat{X^{d-1}})$ which is supported in $\{\x'\mid (\x',h_{\l}(\x'))\in \supp \chi\}$ and $C$ is independent of $b$.
\end{rem}

\begin{example}\label{ex}
Suppose that $M_{\l}\cap \supp \chi$ has at least $m$ nonvanishing principal curvature curvature at every point, then $(\ref{surmes})$ holds for $k=m/2$ by the stationary phase theorem.
\end{example}

Set $R_0^{\pm}(z)=(T(D)-z)^{-1}$ for $z\in \{z\in \mathbb{C}\mid \pm\Im z> 0\}$. Moreover, for a signature $\pm$, we define $\chi(D)R_0^{\pm}(\l\pm i 0)$ if $\pa_{\x}T\neq 0$ on $\supp \chi$  by the Fourier multiplier with its symbol $\chi(\x)(T(\x)-\l\pm i0)^{-1}$. For $1\leq p\leq \infty$, $L^p(X^d)$ denotes the Lebesgue space with the Lebesgue measure if $X=\re$ and with the counting measure if $X=\ze$.

Our first result is the following:
\begin{thm}\label{mainprop}
Let $T\in C^{\infty}(\widehat{X^d},\re)$ and let $I$ be a compact interval of $\re$. Suppose that $T^{-1}(I)$ is compact. Fix a signature $\pm$. Let $\chi\in C_c^{\infty}(\widehat{X^d})$. Suppose that $(\ref{surmes})$ holds for $\l\in I$ and $\supp \chi\subset U$. 
\item[$(i)$] There exists such that
\begin{align*}
\sup_{z\in I_{\pm}}\|\chi(D)R_0^{\pm}(z)\|_{B(L^{p}(X^d), L^{q}(X^d))}<\infty,
\end{align*}
for $(1/p,1/q)\in S_k$.
\item[$(ii)$] Set $k_{\d}=k-\d$ for $0<\d\leq 1$ and $\b_{\d}=(2/p-1)\d$. Then
\begin{align*}
\sup_{z,w\in I_{\pm}, |z-w|\leq 1}|z-w|^{-\b_{\d}}\|\chi(D)(R_0^{\pm}(z)-R_0^{\pm}(w))\|_{B(L^{p}(X^d), L^{p^*}(X^d))}<\infty,
\end{align*}
for $(1/p,1/p^*)\in S_{k_{\d}}$, where $p^*=p/(p-1)$.
\item[$(iii)$]
Suppose $X=\re$. Under Assumption \ref{assa}, for $(1/p,1/q)\in S_k$, there exists $C_{N,p,q}>0$ such that
\begin{align*}
\|\m_{N,\c}(x) \chi(D) u\|_{L^{q}(\re^d)\cap \mathcal{B}^*}\leq C_{N,p,q}\|\m_{N,\c}(x) (T(D)-\l)\chi(D) u\|_{L^{p}(\re^d)+\mathcal{B}}
\end{align*}
for $u\in \mathcal{S}(\re^d)$.
\end{thm}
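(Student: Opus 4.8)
The bound in (iii) is an \emph{a priori} estimate, so the plan is to represent $\chi(D)u$ explicitly in terms of $f\coloneqq (T(D)-\l)\chi(D)u$ and then to feed this representation into part (i) together with the $L^2$ limiting absorption estimate. Fix $\chi_1\in C_c^{\infty}(U)$ with $\chi_1\equiv 1$ on $\supp\chi$; under Assumption~\ref{assa} one has $\pa_{\x}T\neq 0$ on $\bar U\supset\supp\chi_1$, so the multiplier $\chi_1(D)R_0^{\pm}(\l\pm i0)$ (symbol $\chi_1(\x)(T(\x)-\l\pm i0)^{-1}$) is well defined, and since $\widehat f$ is supported in $\supp\chi$ and the distributional identity $(T(\x)-\l\pm i0)^{-1}(T(\x)-\l)=1$ holds on $\supp\chi_1$, we obtain the exact identity
\begin{align*}
\chi(D)u=\chi_1(D)R_0^{\pm}(\l\pm i0)\,(T(D)-\l)\chi(D)u .
\end{align*}
Part (i), applied with $\chi_1$ in place of $\chi$ — legitimate because under Assumption~\ref{assa} the estimate $(\ref{surmes})$ holds for every cutoff supported in $U$ — gives $\|\chi_1(D)R_0^{\pm}(\l\pm i0)\|_{B(L^p,L^q)}\le C$ uniformly for $\l\in I$ and $(1/p,1/q)\in S_k$; this settles the $L^q$ half of (iii) in the case $\m_{N,\c}\equiv 1$.

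For the $\mathcal{B}^*$ half one needs the classical Agmon--H\"ormander limiting absorption bound $\sup_{\l\in I}\|\chi_1(D)R_0^{\pm}(\l\pm i0)\|_{B(\mathcal{B},\mathcal{B}^*)}<\infty$, which I would derive from $(\ref{surmes})$ in the standard way: split $(T(\x)-\l\pm i0)^{-1}$ into principal value plus delta via Sokhotski--Plemelj; the delta term, localized by $\chi_1$ and rewritten through the coarea formula, is essentially a $TT^*$ operator for the restriction-to-$M_\l$ operator, hence a convolution whose kernel is the Fourier transform of a smooth-density surface measure on $M_\l$ supported in $U$, and so bounded $\mathcal{B}\to\mathcal{B}^*$ by $(\ref{surmes})$; the principal-value term is handled by a dyadic decomposition in $|T(\x)-\l|$ with the same surface estimate on each piece. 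Together with the representation of the first paragraph, this proves (iii) for $\m_{N,\c}\equiv 1$.

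It remains to insert the weight, and the mechanism is conjugation. Conjugating $T(D)-\l$ by the exponential-type Carleman weight $\m_{N,\c}$ produces $\m_{N,\c}(x)(T(D)-\l)\m_{N,\c}(x)^{-1}$, which is again a Fourier multiplier with the complex-translated symbol $T(\x+i\c)-\l$ (up to a lower-order error if $\m_{N,\c}$ carries a polynomial or truncation factor: integration by parts in $\x$ then turns the resulting $x$-powers into a bounded number, $\lesssim N$, of $\pa_{\x}$ landing on the cutoffs, which is precisely the situation covered by the derivative-free reformulation of $(\ref{surmes})$ in the Remark after Assumption~\ref{assa}). For $|\c|$ in a fixed bounded range — all $\c$ when $T$ extends holomorphically near $\supp\chi_1$ — the symbol $T(\x+i\c)-\l$ is a small smooth perturbation of $T(\x)-\l$, so its level sets still satisfy $(\ref{surmes})$ with the same $k$ and with constants depending only on finitely many derivatives of the symbol on $\supp\chi_1$. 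Hence the representation of the first paragraph and the two boundedness statements go through verbatim for the conjugated operator, uniformly in $\c$ and $\l$, yielding (iii) with a constant $C_{N,p,q}$; the passage from $u\in C_c^{\infty}$ to $u\in\mathcal{S}$ is by density, using that $\m_{N,\c}$ and $\m_{N,\c}^{-1}$ are tempered.

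The step I expect to be the main obstacle is the uniformity in the Carleman parameter $\c$: one must rerun the proof of part (i), and the $\mathcal{B}\to\mathcal{B}^*$ estimate, for the whole family of \emph{complex-valued}, $\c$-dependent symbols $T(\x+i\c)-\l$, verifying that the non-degeneracy/curvature information encoded in $(\ref{surmes})$ survives the perturbation with $\c$-independent constants and that the $\pm i0$ prescription near $M_\l$ stays compatible with the sign of $\Im T(\x+i\c)$ there. Of the two ingredients the $\mathcal{B}\to\mathcal{B}^*$ endpoint is the more delicate, because — unlike the $L^p\to L^q$ bound — it is genuinely an $L^2$-restriction statement and needs slightly more than the pointwise decay in $(\ref{surmes})$, essentially a square-function bound over dyadic shells, whose uniformity in $\c$ must be checked by hand.
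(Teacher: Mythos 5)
The first two paragraphs are sound for the \emph{unweighted} case: the identity $\chi(D)u=\chi_1(D)R_0^{\pm}(\l\pm i0)(T(D)-\l)\chi(D)u$ is valid, and combined with part (i) and an Agmon--H\"ormander $\mathcal{B}\to\mathcal{B}^*$ bound it does give (iii) when $\m_{N,\c}\equiv 1$. The gap is in the weight insertion, and it is fatal. The weight $\m_{N,\c}(x)=(1+|x|^2)^N(1+\c|x|^2)^{-N}$ is \emph{not} an exponential weight; conjugating a Fourier multiplier by a rational weight of this kind does \emph{not} produce a Fourier multiplier, let alone one with the translated symbol $T(\x+i\c)-\l$. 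The parameter $\c$ is a regularization parameter (letting $\c\dto 0$ recovers the polynomial weight $(1+|x|^2)^N$), not a translation direction, and the ``lower-order errors'' you hope to absorb by integrating by parts are in fact the entire content of the estimate: $\m_{N,\c}\,\chi_1(D)R_0^{\pm}(\l\pm i0)\,\m_{N,\c}^{-1}$ is a genuinely variable-coefficient operator, and there is no perturbative reason for it to obey the same $L^p\to L^q$ or $\mathcal{B}\to\mathcal{B}^*$ bounds as $\chi_1(D)R_0^{\pm}(\l\pm i0)$ uniformly in $\c$. In particular, the fixed-signature kernel of $R_0^{\pm}(\l\pm i0)$ has support on all of $x_d-y_d\in\re$, so the ratio $\m_{N,\c}(x)/\m_{N,\c}(y)$ is large on half of the support no matter how you slice it; no integration by parts will save this.

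The device the paper actually uses (following \cite{IS}) is a different, weight-compatible parametrix, not $R_0^{\pm}$. After reducing $\m_{N,\c}(x)$ to one-dimensional weights $\tilde{\m}_{N,\c}(x\cdot\x_j^{+})$ and rotating so the relevant direction is $x_d$, one factors $T(\x)-\l=e(\x,\l)(\x_d-h_{\l}(\x'))$ and solves the resulting ODE $(D_{x_d}-h_{\l}(\x'))\tilde u=\tilde g$ by variation of parameters \emph{with direction-biased boundary conditions}: integrate from $-\infty$ when $x_d\leq 0$ and from $+\infty$ when $x_d\geq 0$. On the support of the resulting kernel one always has $|y_d|\geq|x_d|$, hence $\tilde{\m}_{N,\c}(x_d)/\tilde{\m}_{N,\c}(y_d)\leq 1$ \emph{pointwise}, so the weighted kernel is dominated by the unweighted one. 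This monotonicity is the heart of the argument, and it does not correspond to a single signature $\pm i0$; it is a mixed retarded/advanced Green's function. After this pointwise domination, the kernel is checked to satisfy Assumptions \ref{assc} and \ref{assd} exactly as in the proof of (i), and Propositions \ref{abpr2} and \ref{away} give both the $L^p\to L^q$ and the $\mathcal{B}\to\mathcal{B}^*$ halves uniformly in $\c\in(0,1]$ and $\l\in I$. So while your overall slogan (``represent $\chi(D)u$ and feed it into (i)'') is the right one, the representation must be chosen so that the weight ratio is under control, and $R_0^{\pm}(\l\pm i0)$ is precisely the representation that is not.
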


\subsection{Applications to the fractional Schr\"odinger operators and the Dirac operators}

Let $n=2^{d/2}$ if $d$ is even and $n=2^{(d+1)/2}$ if $d$ is odd.
We define the Dirac operators on $\re^d$:
\begin{align*}
\mathcal{D}_0=\sum_{j=1}^d\a_jD_j,\,\,\mathcal{D}_1=\sum_{j=1}^d\a_j D_j+\a_{d+1},
\end{align*}
where $\a_j$ are $n\times n$ Hermitian matrix and satisfy the Clifford relations:
\begin{align*}
\a_j\a_k+\a_k\a_j=-2\d_{jk}I_{n\times n}
\end{align*}
and $D_j=\pa_{x_j}/(2\pi i)$.
Note that if we define $D_{d+1}=mI_{n\times n}$, then
\begin{align*}
\mathcal{D}_0^2=-(\sum_{j=1}^{d}I_{n\times n}D_j^2)=-\Delta\cdot I_{n\times n},\,\, \mathcal{D}_1^2=(-\Delta+1)\cdot I_{n\times n},
\end{align*}
where we denote $\Delta=(\sum_{j=1}^d\pa_{x_j}^2)/(4\pi^2)$.
In this subsection, we suppose that $T(D)$ is the one of the following operators:
\begin{align*}
T(D)=(-\Delta)^{s/2},\, T(D)=(-\Delta+1)^{s/2}-1,\, T(D)=\mathcal{D}_0,\, T(D)=\mathcal{D}_{1},
\end{align*}
where $0<s< d$. We use the convention that $s=1$ when $T(D)=\mathcal{D}_0$ or $T(D)=\mathcal{D}_1$. Moreover, we denote the product space $Z^{n}$ for a function space $Z$ by simply $Z$ when $T(D)=\mathcal{D}_0$ or $T(D)=\mathcal{D}_1$. As is noted in \cite[\S 2]{C2}, 
\begin{align*}
\L_c((-\Delta)^{s/2})=\begin{cases}
\{0\}\,\, &\text{if}\,s> 1,\\
\emptyset\,\, &\text{if}\, s\leq 1,
\end{cases}
\quad \L_c((-\Delta+1)^{s/2}-1)=\{0\},
\end{align*}
and 
\begin{align*}
\L_c(\mathcal{D}_0)=\{0\},\quad \L_c(\mathcal{D}_1)=\{-1,1\}.
\end{align*}
Moreover, $T(D)$ is self-adjoint on its domain $H^s(\re^d)$ by the elliptic regularity.

Let $Y_1, Y_2$ be Banach spaces such that
\begin{align}\label{YLap}
&(Y_1,Y_2) \in \bigcup_{(\frac{1}{p},\frac{1}{q})\in S_{\frac{d-1}{2}}}\{L^p(\re^d)\}\times \{L^q(\re^d)\},
\end{align}
if $2d/(d+1)\leq s< d$ and 
\begin{align}\label{Ydirac}
&(Y_1,Y_2) \in \bigcup_{\substack{(\frac{1}{p_1},\frac{1}{q_1})\in S_{\frac{d-1}{2}},\\ \frac{1}{p_2}-\frac{1}{q_2}\leq \frac{s}{d} }}\{L^{p_1}(\re^d)+L^{p_2}(\re^d)\}\times \{L^{q_1}(\re^d)\cap L^{q_2}(\re^d)\},
\end{align}
if $0<s<\frac{2d}{d+1}$.

A part of the following estimate is a generalization of \cite[Theorem3.1]{C2}.

\begin{thm}\label{diracth}
Let $I\subset \re\setminus \L_c(T(D))$ be a compact interval. We define $R_0^{\pm}(\l)$ for $\l\in I$ by the Fourier multiplier of the distribution $(T(\x)-(\l\pm i0))^{-1}$, where this distribution is well-defined since $T(\x)$ has no critical points in $T^{-1}(I)$.
\item[$(i)$] We have
\begin{align*}
\sup_{z\in I_{\pm}}\|R_0^{\pm}(z)\|_{B(Y_1,Y_2)}<\infty.
\end{align*}

\item[$(ii)$] Let $(Y_1,Y_2)$ be satisfying $p=q$ in $(\ref{YLap})$ if $2d/(d+1)\leq s< d$ and $p_1=q_1$ in $(\ref{Ydirac})$ if $0<s<2d/(d+1)$. Let $0<\d\leq 1$ and $\b_{\d}=(2/p-1)\d$. Then
\begin{align*}
\sup_{z,w\in I_{\pm}, |z-w|\leq 1}|z-w|^{-\b_{\d}}\|(R_0^{\pm}(z)-R_0^{\pm}(w))\|_{B(Y_1, Y_2)}<\infty.
\end{align*}


\item[$(iii)$] Let $V \in L^{(d+1)/2}(\re^d)\cap  L^{\infty}(\re^d)$. Assume $V$ is a self-adjoint matrix if $T(D)=\mathcal{D}_0$ or $\mathcal{D}_1$. Set $H_0=T(D)$ and $H=H_0+V$ denotes the unique self-adjoint extensions of $T(D)|_{C_c^{\infty}(\re^d)}$ and $T(D)+V|_{C_c^{\infty}(\re^d)}$ respectively. Then the wave operators
\begin{align*}
W_{\pm}=s-\lim_{t\to \pm \infty}e^{itH}e^{-itH_0}
\end{align*}
exist and are complete, i.e. the ranges of $W_{\pm}$ are the absolutely continuous subspace $\mathcal{H}_{\mathrm{ac}}(H)$ of $H$.

\item[$(iv)$] Let $V \in L^{(d+1)/2}(\re^d)\cap  L^{\infty}(\re^d,\re)$. Assume $s>1/2$ only when $T(D)=(-\Delta)^{s/2}$ with $2s\notin \mathbb{N}$. Then the set of nonzero eigenvalues $\s_{pp}(H)\setminus \{0\}$ is discrete in $\re\setminus \{0\}$. Moreover, each eigenvalue in $\s_{pp}(H)\setminus \{0\}$ has finite multiplicity. 
\end{thm}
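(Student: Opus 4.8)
\textbf{Proof plan for Theorem \ref{diracth}.}

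The plan is to deduce all four parts from Theorem \ref{mainprop} together with standard trace-class and Kato-smoothness arguments. For parts (i) and (ii), the point is that each operator $T(D)$ in the list has the property that for a compact interval $I\subset\re\setminus\L_c(T(D))$ the sphere-like hypersurfaces $M_\l$ carry nonvanishing Gaussian curvature off the critical set; for $(-\Delta)^{s/2}$ and $(-\Delta+1)^{s/2}-1$ the level sets are actual Euclidean spheres, while for $\mathcal{D}_0,\mathcal{D}_1$ one squares the operator (using $\mathcal{D}_0^2=-\Delta$, $\mathcal{D}_1^2=-\Delta+1$) and factors $(T(D)-z)^{-1}=(T(D)+z)(T(D)^2-z^2)^{-1}$, reducing to the scalar case on each spinor component. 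Thus $(\ref{surmes})$ holds with $k=(d-1)/2$, so Theorem \ref{mainprop}(i)--(ii) applies with a cutoff $\chi$ equal to $1$ near $T^{-1}(I)$; away from $T^{-1}(I)$ the symbol $(T(\x)-z)^{-1}$ is bounded with all derivatives uniformly in $z\in I_\pm$, so $(1-\chi)(D)R_0^\pm(z)$ is a nice Fourier multiplier, bounded $L^p\to L^q$ for the relevant exponents by the Mikhlin--Hörmander theorem (in the $\ze^d$ case by the corresponding periodic multiplier theorem), and in the low-regularity fractional case $0<s<2d/(d+1)$ one splits the output using the $L^{q_2}$ endpoint with $1/p_2-1/q_2\le s/d$ to absorb the local singularity of the symbol near $\x=0$. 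Combining the two pieces gives (i) and (ii) on the spaces $(\ref{YLap})$, $(\ref{Ydirac})$.

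For part (iii), the strategy is the classical one of Kato--Yajima and Ionescu--Schlag: write $|V|^{1/2}R_0^\pm(z)|V|^{1/2}$ and, since $V\in L^{(d+1)/2}\cap L^\infty$ implies $|V|^{1/2}\in L^{d+1}\cap L^\infty$, use H\"older together with part (i) at the dual-line endpoint $(1/p,1/q)=((d+2)/(2(d+1)),\,d/(2(d+1)))\in S_{(d-1)/2}$ (and its low-$s$ analogue from $(\ref{Ydirac})$) to obtain
\begin{align*}
\sup_{z\in I_\pm}\||V|^{1/2}R_0^\pm(z)|V|^{1/2}\|_{B(L^2(\re^d))}<\infty .
\end{align*}
Part (ii) upgrades this to H\"older continuity of $z\mapsto |V|^{1/2}R_0^\pm(z)|V|^{1/2}$ up to the boundary $I_\pm$, which is exactly the hypothesis needed for the Kato--Kuroda/stationary-scattering scheme. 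One then checks: $V$ is $H_0$-compact (so $H=H_0+V$ is self-adjoint with the same essential spectrum), $V$ is $H_0$-smooth on $I$ in Kato's sense on account of the uniform bound just displayed, the Born series $(I+|V|^{1/2}R_0^\pm(z)|V|^{1/2})^{-1}$ exists for all but a closed zero-measure set of $\l\in I$ by analytic Fredholm theory plus the H\"older continuity, and hence the limiting absorption principle $(\ref{lim2})$ holds for $H$ on compact subintervals of $\re\setminus\L_c(T(D))$ avoiding the (closed, measure-zero) exceptional set. Existence and completeness of $W_\pm$ then follow from the stationary representation of the wave operators together with the absence of singular continuous spectrum; exceptional points are handled by covering $\re\setminus(\L_c(T(D))\cup\{0\})$ by countably many good intervals and noting that a countable union of zero-measure sets is still spectrally negligible.

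For part (iv), the plan is to run the Fredholm analysis of (iii) more carefully: on each compact $I\subset\re\setminus(\L_c(T(D))\cup\{0\})$ the operator-valued function $z\mapsto K(z)\coloneqq |V|^{1/2}R_0^\pm(z)|V|^{1/2}$ is compact (indeed, since $|V|^{1/2}\in L^{d+1}\cap L^\infty$ and the kernel bounds give enough decay, one shows $K(z)$ is in fact Hilbert--Schmidt, or at worst compact via approximation of $V$ by bounded compactly supported potentials) and, by part (ii), H\"older continuous on $\overline{I_\pm}$; analytic Fredholm theory then shows $(I+K(z))^{-1}$ exists except on a discrete subset of $I$, and $\l$ is a (nonzero) eigenvalue of $H$ with eigenfunction $u$ exactly when $-1\in\sigma(K(\l\pm i0))$ with $|V|^{1/2}u$ the corresponding null vector — the Birman--Schwinger correspondence. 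Discreteness of $\s_{pp}(H)\setminus\{0\}$ and finite multiplicity of each such eigenvalue follow immediately, the extra hypothesis $s>1/2$ when $T(D)=(-\Delta)^{s/2}$, $2s\notin\na$, being needed to guarantee that eigenfunctions lie in the right space for the Birman--Schwinger kernel to be well defined (this is where the local behaviour of the symbol $|\x|^s$ at the origin, i.e. the distinction between $(\ref{YLap})$ and $(\ref{Ydirac})$, matters). The main obstacle, and the place requiring genuine care rather than routine bookkeeping, is the compactness/Hilbert--Schmidt property of $K(z)$ together with its continuity up to the real axis at the exceptional points: one must combine the off-diagonal decay encoded in $(\ref{surmes})$ with the $L^p$-mapping bounds of Theorem \ref{mainprop}, and treat separately the contribution of the low-frequency region near $\x=0$ in the fractional case, where the multiplier is genuinely singular and the two-component description $(\ref{Ydirac})$ is indispensable.
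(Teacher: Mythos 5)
Your route for parts (i), (ii) and (iii) is essentially the paper's. One small inaccuracy in (i)--(ii): you invoke Mikhlin--H\"ormander for the ``tail'' piece $(1-\chi)(D)R_0^\pm(z)$, but a multiplier theorem only gives $L^p\to L^p$; the off-diagonal gain $1/p-1/q\le s/d$ needed for $(\ref{YLap})$/$(\ref{Ydirac})$ comes from the decay $|\x|^{-s}$ of the symbol at infinity. The paper therefore factors through Sobolev weights and the Hardy--Littlewood--Sobolev inequality:
\begin{align*}
\|(1-\chi(D))R_0(z)\|_{B(L^p,L^q)}\le \|(I-\Delta)^{-\a}\|_{B(L^p,L^2)}\,\|(1-\chi(D))(I-\Delta)^{\a+\b}R_0(z)\|_{B(L^2)}\,\|(I-\Delta)^{-\b}\|_{B(L^2,L^q)} ,
\end{align*}
which is what actually closes the estimate. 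For (iii), your H\"older + Birman--Schwinger + local wave-operator scheme matches the paper (which delegates to the proof of Theorem \ref{discth}(iv) via Lemma \ref{dirBir}).

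Part (iv), however, contains a genuine gap. You claim that analytic Fredholm theory gives discreteness of the set $\{\l : I+K(\l\pm i0) \text{ is not invertible}\}$. That does not follow: $K(z)=|V|^{1/2}R_0^\pm(z)|V|^{1/2}$ is analytic only on the open half-plane $\{\pm\Im z>0\}$ (where, by self-adjointness, $I+K(z)$ is already invertible); up to the real boundary you only have H\"older continuity from part (ii). So the analytic Fredholm alternative is not applicable on $I\subset\re$, and the paper's boundary version of this argument (Lemma \ref{Jen} and Proposition \ref{cpxprop}, via Jensen's formula) only yields that the exceptional set $\s_{\mathrm{BS}}(H)$ on the real line is \emph{closed with Lebesgue measure zero} --- not discrete, and not directly tied to eigenvalues of $H$ with $L^2$ eigenfunctions. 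Indeed, that weaker conclusion is exactly what is used, and suffices, in the proof of (iii), but it does not prove discreteness or finite multiplicity.

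What the paper actually does for (iv) is orthogonal to your plan: it proves a Carleman-type inequality in the weights $\m_{N,\c}$ (Proposition \ref{Car}), shows the potential is admissible in the weighted scale (Lemma \ref{adlem}), and then argues by contradiction. If $\s_{pp}(H)\setminus\{0\}$ accumulated in $\re\setminus\{0\}$, there would be an orthonormal sequence of eigenfunctions $u_j$ with eigenvalues in a compact set away from $0$; the Carleman bound plus absorption gives a uniform weighted $W^{s/2,2}$ bound on $u_j$, the compact embedding $(1+|x|)^{1/2+\e_1-2N}\L_{-s/2}L^2\hookrightarrow L^2$ (here the restriction $N<s/2$, i.e. $s>1/2$ with $2s\notin\na$, enters because of the singularity of $|\x|^s$ at $\x=0$ in Lemma \ref{kersin}) gives a strongly convergent subsequence in $L^2$, contradicting orthonormality. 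The same compactness gives finite multiplicity. Your reading of the role of the hypothesis $s>1/2$ as a Birman--Schwinger domain issue is therefore also off the mark: it is a weight-compatibility condition in the Carleman estimate, not a condition on $K(z)$.
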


\begin{rem}
$(i)$ is proved in \cite{C2} if $1/p+1/q=1$. In \cite{HYZ}, $(i)$ is proved when $T(D)=(-\Delta)^{s/2}$ for $2d/(d+1)\leq s<d$.
\end{rem}

\begin{rem}
In $(iii)$ and $(iv)$, the condition $V\in L^{\infty}(\re^d)$ is expected to be relaxed if we consider the appropriate selj-adjoint extension of $T(D)+V$. However, in order to avoid the technical difficulty, we assume $V\in L^{\infty}(\re^d)$.
\end{rem}

\begin{rem}
When $T(D)=\mathcal{D}_0$ or $T(D)=(-\Delta)^{s/2}$, by a scaling argument as in \cite[Remark 4.2]{C2}, we have the uniform bound of $R_0^{\pm}(z)$ with $z\in \mathbb{C}_{\pm}$. Even when $T(D)=\mathcal{D}_1$ or $T(D)=(-\Delta+1)^{s/2}-1$, the author expects to obtain the uniform bound of $R_0^{\pm}(z)$ with $z\in \mathbb{C}_{\pm}$ by further analysis.
\end{rem}

\begin{rem}
When $T(D)=(-\Delta)^{s/2}$ or $T(D)=(-\Delta+1)^{s/2}-1$, under the assumption of part $(iv)$, we can prove
\begin{align}\label{Scuni}
\sup_{z\in I_{\pm}}\|(H-z)^{-1}\|_{B(X,X^*)}<\infty
\end{align}
for any compact set $I\subset \re\setminus(\s_{pp}\cup\{0\})$.
In particular, the singular continuous spectrum of $T(D)$ is empty. For its proof, we may mimic the argument in \cite[Section 4]{IS}. However, when $T(D)=\mathcal{D}_0$ or $T(D)=\mathcal{D}_1$, the author do not know whether $(\ref{Scuni})$ holds or not since the difference of the outgoing resolvent and incoming resolvent is not always positive definite:
\begin{align*}
R_0^+(\l)-R_0^-(\l)=&(\mathcal{D}_0+\l)(\mathcal{R}_0^{+}(\l)-\mathcal{R}_0^{-}(\l)),\,\, \text{if}\,\,T(D)=\mathcal{D}_0,\\
R_0^+(\l)-R_0^-(\l)=&(\mathcal{D}_1+\l)(\mathcal{R}_1^{+}(\l)-\mathcal{R}_1^{-}(\l)),\,\, \text{if}\,\,T(D)=\mathcal{D}_1,
\end{align*}
where $\mathcal{R}_0^{\pm}(\l)=(-\Delta-(\l\pm i0)^2)^{-1}$ and $\mathcal{R}_1^{\pm}(\l)=(-\Delta+1-(\l\pm i0)^2)^{-1}$. See the arguments in \cite[Proof of Theorem 1.3 (d) and (e)]{IS} or \cite[Lemma 8 in the proof of Theorem XIII.33]{RS}.
\end{rem}

\begin{rem}
Under the assumption of $(iv)$, we can prove that each eigenfunction $u$ of $H$ associated with eigenvalue $\l\in \re\setminus \{0\}$ satisfies
\begin{align*}
(1+|x|)^Nu\in H^1(\re^d),\,\,  N\geq0
\end{align*}
and $N< s-1/2$ only when $T(D)=(-\Delta)^{s/2}$ with $s\notin 2\mathbb{N}$. The restriction $N< s-1/2$ when $T(D)=(-\Delta)^{s/2}$ with $s\notin 2\mathbb{N}$ is needed due to the singularity of the symbol $T(\x)=|\x|^s$ at $\x=0$.
\end{rem}

\subsection{Scattering theory for the discrete Schr\"odinger oeprators}

The scattering theory of the discrete Schr\"odinger operators is studied in \cite{KM} for the potential $V\in L^p(\ze^d)$, with $1\leq p<6/5 $ if $d=3$ and $1\leq p <3d/(2d+1)$ if $d\geq 4$. In this subsection, we extend their results to when $V\in L^p(\ze^d)$ for $1\leq p\leq d/3$ at the cost of the restriction of the dimension: $d\geq 4$.

We define the discrete Schr\"odinger operator:
\begin{align*}
H_0u(x)=-\sum_{|x-y|=1, y\in \ze^d}(u(x)-u(y)), \quad x\in \ze^d.
\end{align*}
Note that $H_0$ is a bounded self-adjoint operator on $L^2(\ze^d)$. We write
\begin{align*}
h_0(\x)=4\sum_{j=1}^d\sin ^2\pi \x_j\,\, \text{for}\,\, \x\in \T^d, \,\, H_0=h_0(D)
\end{align*}
and hence the spectrum $\s(H_0)$ of $H_0$ is equal to $[0,4d]$. Moreover, $\s_{ac}(H_0)=[0,4d]$, where $\s_{ac}(H_0)$ is the absolutely continuous spectrum of $H_0$. Set $R_0^{\pm}(z)=(H_0-z)^{-1}$ for $\pm\Im z>0$. Note that $\L_c(h_0(D))=\{4k\}_{k=0}^d$, where we recall that $\L_c(h_0(D))$ is the set of all critical values of $h_0(\x)$. Moreover, if $V\in L^p(\ze^d,\re)$ for some $1\leq p<\infty$, $H=H_0+V$ is a bounded self-adjoint operator and $\s_{ess}(H)=[0,4d]$ since $V\in L^p(\ze^d)\subset L^{\infty}(\ze^d)$ and $V(x)\to \infty$ as $|x|\to \infty$. Here $\s_{ess}(H)$ denotes the essential spectrum of $H$. 

 We define $R_0^{\pm}(\l)$ for $\l\in I$ by the Fourier multiplier of the distribution $(h_0(\x)-(\l\pm i0))^{-1}$, where this distribution is well-defined by virtue of \cite[Theoerem 1.8]{TT}. Note that we may take $\l$ as a critical value.
We recall that
\begin{align*}
\sup_{z\in \mathbb{C}\setminus \re}\|R_0^{\pm}(z)\|_{B(L^{p}(\ze^d), L^{p^*}(\ze^d))}<\infty,
\end{align*}
holds for $1\leq p\leq \frac{3d}{d+3}$ (\cite[Proposition 3.3]{TT}) and $d\geq 4$.

\begin{thm}\label{discth}
Fix a signature $\pm$ and let $d\geq 4$. 
\begin{itemize}
\setlength{\itemindent}{-5.8mm}
\item[$(i)$]  Let $1\leq p\leq \frac{3d}{d+3}$. Then 
\begin{align*}
\sup_{z\in \mathbb{C}_{\pm}}\|R_0^{\pm}(z)\|_{B(L^{p}(\ze^d), L^{p^*}(\ze^d))}<\infty.
\end{align*}

\item[$(ii)$] Let $1\leq p<\frac{3d}{d+3} $. Take $0<\d\leq 1$ such that $p<2/(3\d/d+(d+3)/d)$. Then
\begin{align*}
\sup_{z,w\in \mathbb{C}_{\pm}, |z-w|\leq 1}|z-w|^{-\b_{\d}}\|(R_0^{\pm}(z)-R_0^{\pm}(w))\|_{B(L^{p}(\ze^d), L^{q}(\ze^d))}<\infty.
\end{align*}

\item[$(iii)$] Let $V\in L^{p}(\ze^d)$ for $1\leq p<d/3$ and set $V^{1/2}=\sgn V|V|^{1/2}$. Then, a map $z\in I_{\pm}\mapsto |V|^{1/2}R_{0}^{\pm}(z)|V|^{1/2}$ is H\"older continuous. Moreover, for $V\in L^{d/3}(\ze^d)$, it follows that a map $z\in I_{\pm}\mapsto |V|^{1/2}R_{0}^{\pm}(z)|V|^{1/2}$ is continuous.

\item[$(iv)$] Let $V\in L^{d/3}(\ze^d,\re)$ and set $H=H_0+V$. Then the wave operators
\begin{align*}
W_{\pm}=s-\lim_{t\to \pm \infty}e^{itH}e^{-itH_0}
\end{align*}
exist and are complete, i.e. the ranges of $W_{\pm}$ are the absolutely continuous subspace $\mathcal{H}_{\mathrm{ac}}(H)$ of $H$.

\end{itemize}
\end{thm}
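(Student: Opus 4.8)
Since the four parts build on one another, I would prove them in order, viewing (i)--(ii) as the analytic core and (iii)--(iv) as scattering-theoretic consequences.

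\emph{Parts (i) and (ii).} For $z\in\mathbb{C}\setminus\re$ the bound in (i) is \cite[Proposition 3.3]{TT}; the only new point is to include $\Im z=0$. By \cite[Theorem 1.8]{TT} the limiting distributions $(h_0(\x)-(\l\pm i0))^{-1}$ exist, so $R_0^\pm(\l\pm i0)$ is well-defined and $R_0^\pm(\l\pm i\e)\to R_0^\pm(\l\pm i0)$ in the weak-$*$ sense on finitely supported sequences; weak-$*$ lower semicontinuity of the operator norm then gives $\|R_0^\pm(\l\pm i0)\|_{B(L^p,L^{p^*})}\le\liminf_{\e\dto 0}\|R_0^\pm(\l\pm i\e)\|_{B(L^p,L^{p^*})}$, which is dominated by the $\mathbb{C}\setminus\re$ bound; taking $\sup$ over $\l$ proves (i). For (ii) I would use a finite partition of unity over $h_0^{-1}(I)$: on pieces away from the critical values $\{4k\}_{k=0}^d$ the level sets of $h_0$ are compact smooth hypersurfaces satisfying $(\ref{surmes})$ with the relevant $k$, so Theorem \ref{mainprop}(ii) gives the H\"older bound with $\b_\d=(2/p-1)\d$ in the stated range of $p$, while near the critical values --- where the generic surface-measure decay degenerates --- I would fall back on the finer (dyadic) estimates of \cite{TT}; the boundary values are then reached as in (i). This localization near $\{4k\}_{k=0}^d$ is the one genuinely substantial step, and it is what forces $d\ge 4$ and the precise range of $p$.

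\emph{Part (iii).} This is H\"older's inequality plus a density argument. If $V\in L^r(\ze^d)$ then multiplication by $|V|^{1/2}$ is bounded $L^2(\ze^d)\to L^p(\ze^d)$ and $L^{p^*}(\ze^d)\to L^2(\ze^d)$ whenever $2/p-1=1/r$, with norm $\||V|^{1/2}\|_{L^{2r}(\ze^d)}$; hence $\||V|^{1/2}(R_0^\pm(z)-R_0^\pm(w))|V|^{1/2}\|_{B(L^2(\ze^d))}\le\||V|^{1/2}\|_{L^{2r}(\ze^d)}^2\,\|R_0^\pm(z)-R_0^\pm(w)\|_{B(L^p(\ze^d),L^{p^*}(\ze^d))}$, and for $r<d/3$ (equivalently $p<2d/(d+3)$) part (ii) applies and yields H\"older continuity. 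For the endpoint $V\in L^{d/3}(\ze^d)$ (where $p=2d/(d+3)$ and only (i) is available) I would approximate $V$ in $L^{d/3}$ by finitely supported $V_n$, for which the effective exponent is small and H\"older continuity holds; since $\||V|^{1/2}-|V_n|^{1/2}\|_{L^{2d/3}}^2\le\|\,|V|-|V_n|\,\|_{L^{d/3}}\to 0$, the operators $|V_n|^{1/2}R_0^\pm(\cdot)|V_n|^{1/2}$ converge uniformly on $I_\pm$ to $|V|^{1/2}R_0^\pm(\cdot)|V|^{1/2}$, which is therefore continuous. The same estimate with $V^{1/2}=\sgn V\,|V|^{1/2}$ controls $|V|^{1/2}R_0^\pm(z)V^{1/2}$.

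\emph{Part (iv).} This follows by Kato's smooth perturbation method, as in \cite{KM}, now with the improved estimates (i)--(iii). Write $V=|V|^{1/2}(\sgn V)|V|^{1/2}$; since $|V|^{1/2}\in L^{2d/3}(\ze^d)\subset c_0$, multiplication by $|V|^{1/2}$ is compact on $L^2(\ze^d)$, $H=H_0+V$ is bounded self-adjoint with $\s_{ess}(H)=[0,4d]$, and $H_0=h_0(D)$ is purely absolutely continuous. From (i), $\sup_{z\in\mathbb{C}\setminus\re}\||V|^{1/2}R_0(z)|V|^{1/2}\|_{B(L^2(\ze^d))}<\infty$, i.e.\ $|V|^{1/2}$ is $H_0$-smooth. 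The family $1+\sgn V\,|V|^{1/2}R_0^\pm(z)|V|^{1/2}$ is analytic for $\pm\Im z>0$, a compact perturbation of the identity, invertible for $|\Im z|$ large, and --- since by the Birman--Schwinger correspondence non-invertibility at a nonreal $z$ would force $z$ to be an eigenvalue of the self-adjoint $H$ --- invertible on all of $\{\pm\Im z>0\}$; by (iii) it extends continuously to $I_\pm$, so the analytic Fredholm theorem (cf.\ \cite{IS}, \cite{KM}) gives invertibility on $I_\pm$ outside a discrete set $e\subset\re$. The resolvent identity then expresses $|V|^{1/2}(H-(\l\pm i0))^{-1}|V|^{1/2}$ on $[0,4d]\setminus e$ through $|V|^{1/2}R_0^\pm(\l\pm i0)|V|^{1/2}$ and $(1+\sgn V\,|V|^{1/2}R_0^\pm(\l\pm i0)|V|^{1/2})^{-1}$, so $|V|^{1/2}$ is locally $H$-smooth there; Kato's theorem gives existence and local completeness of $W_\pm$ on $[0,4d]\setminus e$. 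Finally $e\cup\{0,4d\}$ is a null set and $\s_{ac}(H)\subset[0,4d]$, so the absolutely continuous subspace is unaffected by it and $\Ran W_\pm=\mathcal{H}_{\mathrm{ac}}(H)$. The only points in (iv) requiring a little care are the compactness of $|V|^{1/2}R_0^\pm(\l\pm i0)|V|^{1/2}$ at real $\l$ (a norm limit of compact operators, by (iii)) and the discreteness of $e$; the genuine difficulty of the whole theorem is, as noted, the harmonic analysis for (i)--(ii) near the critical values $\{4k\}_{k=0}^d$.
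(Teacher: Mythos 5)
Your parts (i), (iii), and (iv) essentially track the paper. In (i), the paper combines the norm continuity established in part (ii) with \cite[Proposition 3.3]{TT} via a limiting argument; your weak-$*$ lower semicontinuity argument reaches the same endpoint bound in much the same spirit. Part (iii) is, as you say, H\"older's inequality plus an $\e/3$-approximation by finitely supported potentials (the paper's Lemma \ref{discear}). Part (iv) follows the Birman--Schwinger/Kato smooth perturbation argument of \cite{KM}, with the exceptional set controlled by the complex-analytic Proposition \ref{cpxprop}. One small caveat on (iv): the paper establishes only that $\s_{\mathrm{BS}}(H)$ is closed of Lebesgue measure zero, not discrete --- the analytic Fredholm alternative cannot give discreteness at the real boundary, where the extended resolvent is merely continuous --- so you should phrase the conclusion in terms of a null exceptional set rather than a discrete one; you do note this at the end, but the earlier claim of discreteness is not correct.

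The genuine gap is in part (ii). You propose to cut off away from the critical values $\{4k\}_{k=0}^{d}$, apply Theorem \ref{mainprop}(ii) on the regular part, and ``fall back on the finer (dyadic) estimates of \cite{TT}'' near the critical energies. Theorem \ref{mainprop} requires $\pa_\x T\neq 0$ on $\supp\chi$, so it cannot reach the critical levels; the level sets $\{h_0=4k\}$, $1\le k\le d-1$, have conical singularities where the surface-measure decay $(\ref{surmes})$ breaks down, and for $k=0,d$ the level set degenerates to a point. Moreover, \cite{TT} furnishes a uniform $B(L^{3_*},L^{3^*})$ bound for the boundary resolvent but not H\"older continuity in $z$ near the critical values, which is precisely what (ii) demands --- so the ``fall back'' is not available without new work. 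The paper sidesteps these singularities entirely by a global argument following \cite[Lemma 4.7]{RoS}: it writes $R_0^+(z)-R_0^+(z')=\int_0^\infty(e^{itz}-e^{itz'})e^{it\Delta_d}\,dt$ and combines the discrete dispersive estimate of \cite{SK},
\begin{align*}
\|e^{it\Delta_d}\|_{B(L^p(\ze^d),L^{p^*}(\ze^d))}\lesssim \jap{t}^{-\frac{d}{3}(\frac{2}{p}-1)},
\end{align*}
with the elementary interpolation $|e^{itz}-e^{itz'}|\le 2^{1-\d}|t|^{\d}|z-z'|^{\d}$; the time integral converges exactly on the stated range $1\le p<3_{*,\d}$. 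No partition of unity, restriction estimate, or curvature hypothesis is used, and the resulting bound is automatically uniform across all energies, including the critical ones.
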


\begin{rem}
In Proposition \ref{disclow}, we prove that the range of $p$ can be extended in the low energy or the high energy. 
\end{rem}

We fix some notations.
For an integer $k\geq1$, $C_c^{\infty}(X^k)$ denotes $C_c^{\infty}(\re^k)$ if $X=\re$ and the set of all finitely supported functions if $X=\ze$.
For $1\leq p\leq \infty$, we write $p^*=p/(p-1)$.
We denote $t_{+}=\max{(t,0)}$ for $t\in \re$. We define the Bezov space $\mathcal{B}$ and $\mathcal{B}^*$ by
\begin{align*}
&\|u\|_{\mathcal{B}}=\|u\|_{L^2(|x|\leq 1)}+\sum_{j=1}^{\infty}2^{j/2}\|u\|_{L^2(2^{j-1}\leq |x|<2^{j})},\\
&\|u\|_{\mathcal{B}^*}=\|u\|_{L^2(|x|\leq 1)}+\sup_{j\geq 1}2^{-j/2}\|u\|_{L^2(2^{j-1}\leq |x|<2^{j})},\\
&\mathcal{B}=\{u\in L^2_{loc}(X^d) \mid \|u\|_{\mathcal{B}}<\infty\},\,\, \mathcal{B}^*=\{u\in L^2_{loc}(X^{d})\mid \|u\|_{\mathcal{B}^*}<\infty\},\\
&\mathcal{B}^*_0=\{u\in \mathcal{B}^*\mid \limsup_{R\to \infty}\frac{1}{R}\int_{|x|\leq R}|u(x)|^2dx=0 \}.
\end{align*}

\textbf{Acknowledgment.}  
The author was supported by JSPS Research Fellowship for Young Scientists, KAKENHI Grant Number 17J04478 and the program FMSP at the Graduate School of Mathematics Sciences, the University of Tokyo. The author would like to thank his supervisors Kenichi Ito and Shu Nakamura for encouraging to write this paper. The author also would like to gratefully thank Haruya Mizutani and Yukihide Tadano for helpful discussions. Moreover, the author would be appreciate Evgeny Korotyaev informing the paper \cite{KM} and Jean-Claude Cuenin for pointing out a mistake of the first draft.

\section{Abstract theorem}
In this section, we state abstract theorems which give estimates for some integral operators.
Let $K\in L^{\infty}(X^d\times X^d)$. For $x,y\in X^d$, we denote 
\begin{align*}
K(x,y)=K(x',y',x_d,y_d)=K_{x_d,y_d}(x',y'),\,\, x=(x',x_d),\,\, y=(y',y_d),
\end{align*}
where $x', y'\in X^{d-1}$ and $x_d,y_d\in X$. Moreover, we denote
\begin{align*}
Kf(x)=\int_{X^d}K(x,y)f(y)dy,\,\, T_{x_d,y_d}g(x')=\int_{X^{d-1}}K_{x_d,y_d}(x',y')f(y')dy'
\end{align*}
for $f\in C_c^{\infty}(X^d)$ and $g\in C_c^{\infty}(X^{d-1})$.

\subsection{Estimates for integral operators on duality line}

We consider the following assumptions:
\begin{ass}\label{assb}
There exists $C_0, C_1>0$ such that for any $x_d,y_d\in X$ and $g\in C_c^{\infty}(X^{d-1})$
\begin{align}
&\|T_{x_d,y_d}g\|_{L^2(X^{d-1})}\leq C_0\|g\|_{L^2(X^{d-1})},\label{assb1} \\
&\|T_{x_d,y_d} g\|_{L^{\infty}(X^{d-1})}\leq C_1(1+|x_d-y_d|)^{-k}\|g\|_{L^1(X^{d-1})}. \label{assb2}
\end{align}
\end{ass}

\begin{rem}
Suppose that we can write $K(x,y)=K_1(x'-y',x_d,y_d)$ for some $K_1\in L^{\infty}(X^{d+1})$. Then Assumption \ref{assb} directly follows from the following estimates:
\begin{align*}
&\|\int_{X^{d-1}}K_1(x',x_d,y_d)e^{-2\pi ix'\cdot \x'}dx'\|_{L^{\infty}(\widehat{X^{d-1}_{\x'}})}\leq C_0,\\
&\sup_{x'\in X^{d-1}}|K_1(x',x_d,y_d)|\leq C_1(1+|x_d-y_d|)^{-k}.
\end{align*}

\end{rem}

\begin{rem}
By the Riesz-Thorin interpolation theorem, $(\ref{assb1})$ and $(\ref{assb2})$ imply
\begin{align}
\|T_{x_d,y_d} g\|_{L^{p^*}(X^{d-1})}\leq C_0^{2-\frac{2}{p}} C_1^{\frac{2}{p}-1}(1+|x_d-y_d|)^{-k(\frac{2}{p}-1)}\|g\|_{L^p(X^{d-1})},\label{assb3}
\end{align}
for $1\leq p\leq 2$.
\end{rem}

\begin{prop}\label{abpr1}
Suppose Assumption $\ref{assb}$. Then there exists a universal constant $M_d>0$ and $M_{p,k}>0$  such that
\begin{align}
(\sup_{R>0, x_0\in \re^d}\frac{1}{R}\int_{|x-x_0|\leq R}|K f(x)|^2dx)^{\frac{1}{2}} \leq M_dC_0\|f\|_{\mathcal{B}}&,\,\, f\in \mathcal{B},\label{int1}\\
\|K f\|_{L^{p^*}(X^d)}\leq M_{p,k}C_0^{2-\frac{2}{p}} C_1^{\frac{2}{p}-1}\|f\|_{L^p(X^d)}&,\,\, f\in L^p(X^d)\label{int2}
\end{align}
for $1\leq p\leq 2(k+1)/(k+2)$.
\end{prop}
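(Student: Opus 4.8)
The plan is to prove the two estimates \eqref{int1} and \eqref{int2} by slicing in the last variable and applying a one-dimensional kernel bound, in the spirit of Kerman--Sawyer / Stein--Weiss type arguments. For \eqref{int2}, first observe that by \eqref{assb3} the operator $K$ has an operator-valued kernel: writing $(Kf)(\cdot,x_d) = \int_X T_{x_d,y_d}\big(f(\cdot,y_d)\big)\,dy_d$, Minkowski's inequality in $L^{p^*}(X^{d-1})$ gives
\begin{align*}
\|(Kf)(\cdot,x_d)\|_{L^{p^*}(X^{d-1})} \leq C_0^{2-2/p}C_1^{2/p-1}\int_X (1+|x_d-y_d|)^{-k(2/p-1)}\|f(\cdot,y_d)\|_{L^p(X^{d-1})}\,dy_d.
\end{align*}
Thus, setting $F(y_d) = \|f(\cdot,y_d)\|_{L^p(X^{d-1})}$ and $G(x_d)=\|(Kf)(\cdot,x_d)\|_{L^{p^*}(X^{d-1})}$, we have $G \leq C\,(\,(1+|\cdot|)^{-k(2/p-1)} * F\,)$ on $X$. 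Then I would take the $L^{p^*}(X)$ norm and bound $\|Kf\|_{L^{p^*}(X^d)} = \|G\|_{L^{p^*}(X)}$ — note $\|Kf\|_{L^{p^*}(X^d)} = \big\|\, \|(Kf)(\cdot,x_d)\|_{L^{p^*}(X^{d-1})}\,\big\|_{L^{p^*}_{x_d}}$ since $p^*$ is the same exponent in both the inner and outer integral. Applying Young's inequality $\|h*F\|_{L^{p^*}(X)} \leq \|h\|_{L^{r}(X)}\|F\|_{L^p(X)}$ with $1/r = 1 + 1/p^* - 1/p = 2/p^*$, i.e. $r = p^*/2$, requires $h(t)=(1+|t|)^{-k(2/p-1)} \in L^{p^*/2}(X)$, which holds precisely when $k(2/p-1)\cdot p^*/2 > 1$ (in the $X=\ze$ case the same exponent guarantees summability). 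A short computation shows this inequality is equivalent to $p \leq 2(k+1)/(k+2)$, matching the stated range, with equality allowed because at the endpoint one still has borderline integrability — or, more carefully, one runs a weak-type/real-interpolation argument at the endpoint. This yields \eqref{int2} with the claimed constants.

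For \eqref{int1}, the target is the $\mathcal{B}^*$-type (Morrey--Campanato) norm on the left and the Besov norm $\mathcal{B}$ on the right, and only the $L^2$ bound \eqref{assb1} is used (so $C_0$ only). The natural approach is again to slice: $\|(Kf)(\cdot,x_d)\|_{L^2(X^{d-1})} \leq C_0\int_X \|f(\cdot,y_d)\|_{L^2(X^{d-1})}\,dy_d$ is too lossy, so instead I would not integrate the kernel bound but rather use it together with a dyadic decomposition of $f$ in the full variable $x$ adapted to the $\mathcal{B}/\mathcal{B}^*$ structure. Concretely, decompose $f = \sum_j f_j$ where $f_j$ is supported in the dyadic shell $\{2^{j-1}\leq |x| < 2^j\}$ (and $f_0$ on $\{|x|\leq 1\}$), estimate $\|Kf_j\|$ on each output shell $\{|x-x_0|\leq R\}$, and sum. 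The key point is that \eqref{assb1} combined with the trivial bound $\|T_{x_d,y_d}\|_{L^2\to L^2}\leq C_0$ uniformly in $x_d,y_d$ gives, for any fixed pair of $x_d$-intervals, an $L^2(X^{d-1})\to L^2(X^{d-1})$ bound, and then the one-dimensional structure is handled by a Schur-test / Cauchy--Schwarz argument in $x_d,y_d$ exploiting the decay of the shells. This is exactly the abstract version of the standard $\mathcal{B}^*$--$\mathcal{B}$ estimate for the free resolvent (cf.\ the Agmon--H\"ormander estimate), and I expect it to follow by the usual dyadic bookkeeping once the slicing is set up.

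The main obstacle I anticipate is the endpoint case $p = 2(k+1)/(k+2)$ in \eqref{int2}: at this value, $r=p^*/2$ makes $k(2/p-1) = 1/r$ exactly, so $(1+|t|)^{-k(2/p-1)}$ is the borderline non-$L^r$ function, and plain Young's inequality fails. To handle this I would either (a) invoke the weak-type bound $(1+|t|)^{-1/r}\in L^{r,\infty}(X)$ together with the Young inequality in Lorentz spaces $L^{r,\infty}*L^{p,1}\hookrightarrow L^{p^*,\infty}$ — wait, this gives a weak-type conclusion, so more precisely I would prove the strong $L^p\to L^{p^*}$ bound for $p$ strictly less than the endpoint and then recover the endpoint by real interpolation between two such estimates (using the remark that all results extend to Lorentz spaces), or (b) directly run the Stein--Weiss argument for the fractional-integration-type operator with kernel $(1+|t|)^{-1/r}$, which is bounded $L^p(X)\to L^{p^*}(X)$ at the endpoint by the classical Hardy--Littlewood--Sobolev theorem on $\re$ adapted to the inhomogeneous weight (on $\ze$ the discreteness only helps). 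Option (b) is cleanest: the operator $F\mapsto (1+|\cdot|)^{-1/r}*F$ is dominated by a constant times the Riesz potential $I_{1-1/r'}$... I would simply cite HLS. The rest is routine.
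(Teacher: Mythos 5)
Your argument for \eqref{int2} is essentially the paper's: slice in the last coordinate, apply Minkowski's inequality and the interpolated bound \eqref{assb3} to reduce to a one-dimensional convolution with kernel $(1+|x_d-y_d|)^{-k(2/p-1)}$, and then invoke a fractional-integration (Hardy--Littlewood--Sobolev) estimate on $X$ to close the endpoint $p=2(k+1)/(k+2)$. Your exponent bookkeeping is correct, and your option (b) --- quote HLS directly rather than interpolate from strictly subcritical exponents --- is exactly what the paper does ("the fractional integration theorem").

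For \eqref{int1}, however, you go astray. You dismiss the slice-wise estimate
\[
\|(Kf)(\cdot,x_d)\|_{L^2(X^{d-1})}\le C_0\int_X\|f(\cdot,y_d)\|_{L^2(X^{d-1})}\,dy_d
\]
as ``too lossy'' and retreat to a dyadic decomposition plus a Schur test which you do not carry out. In fact this ``lossy'' estimate is precisely what the paper uses, and it is sharp for the claim. The missing ingredient is not a finer decomposition of $K$ but two elementary observations about the spaces $\mathcal{B}$ and $\mathcal{B}^*$ when one variable is singled out: first,
\[
\sup_{R>0,\,x_0}\frac{1}{R}\int_{|x-x_0|<R}|g(x)|^2\,dx\lesssim \sup_{x_d\in X}\|g(\cdot,x_d)\|_{L^2(X^{d-1})}^2,
\]
which follows because the slab $\{|x_d-x_{0,d}|<R\}$ contains the ball and has $x_d$-measure $\lesssim R$; and second,
\[
\int_X\|f(\cdot,y_d)\|_{L^2(X^{d-1})}\,dy_d\lesssim_d\|f\|_{\mathcal{B}},
\]
which follows by splitting the $y_d$-integral into dyadic scales, applying Cauchy--Schwarz in $y_d$ on each scale, and noting $\{2^{j-1}\le|y_d|<2^j\}\subset\{|y|\ge 2^{j-1}\}$ so that the resulting $L^2$ masses are controlled by the tail of the Besov sum (an Abel-summation/Cauchy--Schwarz in $j$ absorbs the $2^{j/2}$ weight). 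Sandwiching the ``lossy'' slice estimate between these two observations gives \eqref{int1} directly with constant $M_dC_0$. Your instinct that \eqref{assb1} alone (and hence $C_0$ alone) is what enters is right; but you should have trusted the simple slicing rather than reaching for an unexecuted dyadic scheme, which as described would need significant work to make rigorous and gains nothing here.
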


\begin{rem}
$(\ref{int2})$ follows from Proposition \ref{away} below under the assumption of Proposition \ref{away}. However, the proof below is simpler than the proof of Proposition \ref{away}.
\end{rem}

\begin{proof}
By a density argument, we may assume $f\in C_c^{\infty}(X^d)$. We observe
\begin{align}
\sup_{R>0,x_0\in X^d}\frac{1}{R}\int_{|x-x_0|<R}|Kf(x)|^2dx\leq& \sup_{x_d\in \re}\|Kf(\cdot, x_d)\|_{L^2(X^{d-1})}^2, \label{Bezc1}\\
\int_{\re}\|Kf(\cdot,y_d)\|_{L^2(X^{d-1})}dy_d\leq& M_d\|Kf\|_{B}, \label{Bezc2}
\end{align}
with some universal constant $M_d>0$. Using the Minkowski inequality and $(\ref{assb1})$, we obtain $(\ref{int1})$.

Next, we prove $(\ref{int2})$. We set $L_p=C_0^{2-\frac{2}{p}} C_1^{\frac{2}{p}-1}$. By the Minkowski inequality and $(\ref{assb3})$, we have
\begin{align*}
\|K f\|_{L^{p^*}(X^d)}=&\| \|\int_{X} T_{x_d,y_d}(f(\cdot, y_d)) dy_d\|_{L^{p^*}(X^{d-1}_{x'})}\|_{L^{p^*}(X_{x_d})}\\
\leq&L_p\| \int_{X} (1+|x_d-y_d|)^{-k(\frac{2}{p}-1)} \|f(\cdot, y_d) \|_{L^{p^*}(X^{d-1}_{y'})}dy_d\|_{L^{p^*}(X_{x_d})}\\
\leq&M_{p,k}L_p\|f\|_{L^p(X^d)},
\end{align*}
where we use the fractional integration theorem in the last line. This gives $(\ref{int2})$.
\end{proof}

\subsection{Estimates for integral operators away from duality line}

For $x_d\in X$, we define $T_{x_d}$ and $T_{x_d}^*$ by
\begin{align*}
T_{x_d}f(x')=Kf(x',x_d)=\int_{X^{d}}K(x,y)f(y)dy,\,\, T_{x_d}^*g(y)=\int_{X^{d-1}}\bar{K}(x,y)g(x')dx'.
\end{align*}
We define
\begin{align*}
S_{x_d}(y_d,z_d)g(y')=&\int_{X^{d-1}}\int_{X^{d-1}}\bar{K}(x,y)K(x,z)g(z')dz'dx'.
\end{align*}
Note that
\begin{align*}
T_{x_d}^*T_{x_d}f(y)=&\int_{X}(S_{x_d}(y_d,z_d)f(\cdot, z_d))(y')dz_d.
\end{align*}

Next, we consider the following assumption.

\begin{ass}\label{assc}
There exists $C_2, C_3>0$ such that for any $x_d, y_d, z_d\in X$
\begin{align}
&\|S_{x_d}(y_d,z_d) g\|_{L^2(X^{d-1})}\leq C_2^2\|g\|_{L^2(X^{d-1})},\label{assc1} \\
&\|S_{x_d}(y_d,z_d) g\|_{L^{\infty}(X^d)}\leq C_3^2(1+|y_d-z_d|)^{-k}\|g\|_{L^1(X^{d-1})}. \label{assc2}
\end{align}
\end{ass}

\begin{rem}
Suppose that we can write $K(x,y)=K_1(x'-y',x_d,y_d)$ for some $K_1\in L^{\infty}(X^{d+1})$. Then Assumption \ref{assc} directly follows from the following estimates:
\begin{align*}
&\|\int_{X^{d-1}}\int_{X^{d-1}}e^{2\pi iy'\cdot \x'}\bar{K}_1(x',x_d,y_d)K_1(x'-y', x_d,z_d)dx'dy'\|_{L^{\infty}(\widehat{X^{d-1}})}\leq C_2^2, \\
&\sup_{y',z'\in X^{d-1}}|\int_{X^{d-1}}\bar{K}_1(x'-y',x_d, y_d)K_1(x'-z',x_d, z_d)dx'|\leq C_3^2(1+|y_d-z_d|)^{-k}.
\end{align*}
\end{rem}

\begin{rem}
By the Riesz-Thorin interpolation theorem, $(\ref{assc1})$ and $(\ref{assc2})$ imply
\begin{align}
\|S_{x_d}(y_d,z_d) g\|_{L^{p^*}(X^{d-1})}\leq (C_2^{2-\frac{2}{p}} C_3^{\frac{2}{p}-1})^2(1+|y_d-z_d|)^{-k(\frac{2}{p}-1)}\|g\|_{L^p(X^{d-1})},\label{assc3}
\end{align}
for $1\leq p\leq 2$.
\end{rem}

\begin{prop}\label{abpr2}
Suppose that $K$ satisfies Assumption $\ref{assc}$. Then there exists a universal constant $M_{p,k}'>0$  such that
\begin{align}
(\sup_{R>0, x_0\in \re^d}\frac{1}{R}\int_{|x-x_0|\leq R}|Kf(x)|^2dx)^{\frac{1}{2}} \leq M_{p,k}'C_2^{2-\frac{2}{p}} C_3^{\frac{2}{p}-1}\|f\|_{L^p(X^d)}&,\,\, f\in L^p(X^d),\label{int3}
\end{align}
for $1\leq p\leq 2(k+1)/(k+2)$. Moreover, if $K^*(x,y)=\bar{K}(y,x)$ satisfies Assumption $\ref{assc}$, then it follows that
\begin{align}
\|K^*f\|_{L^{q}(X^d)}\leq M_{q/(q-1),k}'C_2^{\frac{2}{q}} C_3^{1-\frac{2}{q}}\|f\|_{\mathcal{B}},\,\, f\in \mathcal{B}, \label{int32}
\end{align}
for $2(k+1)/k \leq q\leq \infty$.
\end{prop}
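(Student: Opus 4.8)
The plan is to prove $(\ref{int3})$ by the $T^{*}T$ scheme already used for $(\ref{int1})$--$(\ref{int2})$, now feeding in Assumption~\ref{assc} (through $(\ref{assc3})$) in place of Assumption~\ref{assb}, so that the whole estimate collapses to a one--dimensional fractional integration inequality, and then to deduce $(\ref{int32})$ from $(\ref{int3})$ by $\mathcal{B}$--$\mathcal{B}^{*}$ duality. Along the way one should notice that $q^{*}=q/(q-1)\in[1,2(k+1)/(k+2)]$ exactly when $q\in[2(k+1)/k,\infty]$, so $(\ref{int3})$ and $(\ref{int32})$ are dual ranges.

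For $(\ref{int3})$ I may assume $f\in C_c^{\infty}(X^d)$ by density. As in $(\ref{Bezc1})$ the left side of $(\ref{int3})$ is bounded by $\sup_{x_d\in X}\|T_{x_d}f\|_{L^2(X^{d-1})}^2$, and I expand $\|T_{x_d}f\|_{L^2(X^{d-1})}^2=\langle T_{x_d}^{*}T_{x_d}f,f\rangle_{L^2(X^d)}$ using the identity $T_{x_d}^{*}T_{x_d}f(y)=\int_X(S_{x_d}(y_d,z_d)f(\cdot,z_d))(y')\,dz_d$ noted above:
\begin{align*}
\|T_{x_d}f\|_{L^2(X^{d-1})}^2=\int_X\int_X\langle S_{x_d}(y_d,z_d)f(\cdot,z_d),\,f(\cdot,y_d)\rangle_{L^2(X^{d-1})}\,dz_d\,dy_d .
\end{align*}
Estimating the bracket by Hölder in $X^{d-1}$ and then by $(\ref{assc3})$, and writing $F(t)=\|f(\cdot,t)\|_{L^p(X^{d-1})}$ and $\a=k(2/p-1)\ge0$, I get a bound \emph{independent of $x_d$} (this uniformity, already built into Assumption~\ref{assc}, is the key structural point):
\begin{align*}
\sup_{x_d}\|T_{x_d}f\|_{L^2(X^{d-1})}^2\le\big(C_2^{2-\frac2p}C_3^{\frac2p-1}\big)^{2}\int_X\int_X(1+|y_d-z_d|)^{-\a}F(y_d)F(z_d)\,dy_d\,dz_d .
\end{align*}
Since $\|F\|_{L^p(X)}=\|f\|_{L^p(X^d)}$ by Fubini, it then suffices to show $\|(1+|\cdot|)^{-\a}\ast F\|_{L^{p^{*}}(X)}\le C_{p,k}\|F\|_{L^p(X)}$, which is Young's convolution inequality as soon as $(1+|\cdot|)^{-\a}\in L^{r}(X)$ with $1/r=2-2/p$; one checks $\a r>1\iff 1/p>(1+k)/(1+2k)$, i.e.\ for $1\le p<2(k+1)/(k+2)$, while $p=1$ (kernel in $L^{\infty}(X)$) is trivial. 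At the borderline $p=2(k+1)/(k+2)$ one has $\a r=1$, so $(1+|\cdot|)^{-\a}\notin L^{r}(X)$, but $(1+|t|)^{-\a}\le|t|^{-\a}\in L^{r,\infty}(X)$ and the weak--type (O'Neil / Hardy--Littlewood--Sobolev) convolution inequality still delivers the bound (note $p>1$ at this endpoint when $k>0$). Taking square roots gives $(\ref{int3})$ with $M'_{p,k}$ proportional to $\sqrt{C_{p,k}}$.

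For $(\ref{int32})$, fix $q\in[2(k+1)/k,\infty]$ and put $p=q^{*}$, so $1\le p\le 2(k+1)/(k+2)$ and $2-2/p=2/q$, $2/p-1=1-2/q$; since $K$ satisfies Assumption~\ref{assc}, $(\ref{int3})$ applies with this $p$. Directly from the definition, $\|u\|_{\mathcal{B}^{*}}\le2\big(\sup_{R>0,x_0}\frac1R\int_{|x-x_0|\le R}|u|^2\big)^{1/2}$ for $u\in L^2_{loc}(X^d)$ (take $R=2^{j}$, $x_0=0$). For $f\in C_c^{\infty}(X^d)$ and compactly supported $\f\in\mathcal{B}$, Fubini gives $\langle Kf,\f\rangle_{L^2(X^d)}=\langle f,K^{*}\f\rangle_{L^2(X^d)}$, so using the dyadic Cauchy--Schwarz bound $|\langle u,\f\rangle|\le\|u\|_{\mathcal{B}^{*}}\|\f\|_{\mathcal{B}}$ and $(\ref{int3})$,
\begin{align*}
|\langle f,K^{*}\f\rangle|\le\|Kf\|_{\mathcal{B}^{*}}\|\f\|_{\mathcal{B}}\le 2M'_{p,k}\,C_2^{2/q}C_3^{1-2/q}\,\|f\|_{L^p(X^d)}\|\f\|_{\mathcal{B}} .
\end{align*}
By the converse of Hölder's inequality (in $f\in L^p$, $p<\infty$) this forces $K^{*}\f\in L^{p^{*}}(X^d)=L^q(X^d)$ with $\|K^{*}\f\|_{L^q}\le 2M'_{p,k}C_2^{2/q}C_3^{1-2/q}\|\f\|_{\mathcal{B}}$, and a truncation argument ($\f_R=\mathbf{1}_{\{|x|\le R\}}\f\to\f$ in $\mathcal{B}$ with $\|\f_R\|_{\mathcal{B}}\le\|\f\|_{\mathcal{B}}$) extends this to every $\f\in\mathcal{B}$ — it is also what makes $K^{*}\f$ meaningful there — giving $(\ref{int32})$ with $M'_{q/(q-1),k}$ enlarged by the harmless factor $2$.

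I expect the main obstacle to be the borderline exponent $p=2(k+1)/(k+2)$ in $(\ref{int3})$ (equivalently $q=2(k+1)/k$ in $(\ref{int32})$), where plain Young's inequality fails and one is forced into the weak--type / Hardy--Littlewood--Sobolev convolution inequality; the only other point needing care is the a priori definition of $K^{*}\f$ for $\f\in\mathcal{B}$, which is why one argues first with compactly supported $\f$ and then truncates. Everything else is the same bookkeeping as in the proof of Proposition~\ref{abpr1}.
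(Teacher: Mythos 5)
Your proof is correct and follows the same $T^*T$-plus-duality scheme as the paper: for $(\ref{int3})$ you reduce via $(\ref{Bezc1})$ and $(\ref{assc3})$ to a one-dimensional fractional integral inequality, writing the $T^*T$ step as a bilinear form rather than as the $L^{p^*}$ bound on $T_{x_d}^*T_{x_d}f$ (a cosmetic difference), and for $(\ref{int32})$ you dualize $(\ref{int3})$ through the $\mathcal{B}$--$\mathcal{B}^*$ pairing, which is equivalent to the paper's slice-wise $L^2$-duality followed by $(\ref{Bezc2})$ and Minkowski. One small arithmetic slip: $\alpha r>1$ is equivalent to $1/p>(k+2)/(2(k+1))$, not $1/p>(1+k)/(1+2k)$, though your stated conclusion ``$1\le p<2(k+1)/(k+2)$'' is the correct one and is what matters. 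Note also that the paper's own proof of $(\ref{int32})$ applies $(\ref{int31})$ to $K^*$ and ends up bounding $\|Kf\|_{L^q}$ rather than $\|K^*f\|_{L^q}$; under the standing hypothesis that $K$ satisfies Assumption \ref{assc}, the roles of $K$ and $K^*$ in the ``Moreover'' clause are interchangeable, so your argument --- which uses only the hypothesis on $K$ --- does prove the statement as literally written.
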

\begin{proof}
By a density argument, we may assume $f\in C_c^{\infty}(X^d)$. First, we prove $(\ref{int3})$. Due to $(\ref{Bezc1})$, it suffices to prove
\begin{align}\label{int31}
\|T_{x_d}f\|_{L^2(X^{d-1})}\leq M_{p,k}'C_2^{2-\frac{2}{p}} C_3^{\frac{2}{p}-1}\|f\|_{L^p(X^d)},\,\, f\in C_c^{\infty}(X^d).
\end{align}
By the standard $T^*T$ argument, this estimate is equivalent to
\begin{align*}
\|T_{x_d}^*T_{x_d}f\|_{L^{p^*}(X^d)}\leq (M_{p,k}'C_2^{2-\frac{2}{p}} C_3^{\frac{2}{p}-1})^2\|f\|_{L^p(X^d)}.
\end{align*}
We set $L_p=(C_2^{2-\frac{2}{p}} C_3^{\frac{2}{p}-1})^2$. Using the Minkowski inequality and $(\ref{assc3})$, we have
\begin{align*}
\|T_{x_d}^*T_{x_d} f\|_{L^{p^*}(X^d)}=&\| \|\int_{X}(S_{x_d}(y_d,z_d)f(\cdot, z_d))(y')dz_d\|_{L^{p^*}(X^{d-1}_{y'})}\|_{L^{p^*}(X_{y_d})}\\
\leq&L_p \| \int_{X} (1+|y_d-z_d|)^{-k(\frac{2}{p}-1)} \|f(\cdot, y_d) \|_{L^{p^*}(X^{d-1}_{y'})}dy_d\|_{L^{p^*}(X_{y_d})}\\
\leq&(M_{p,k}')^2L_p\|f\|_{L^p(X^d)},
\end{align*}
where we use the fractional integration theorem (the Hardy-Littlewood-Sobolev theorem) in the last line. This proves $(\ref{int3})$.

Next, we prove $(\ref{int32})$. Replacing $K$ in $(\ref{int31})$ by $K^*$, we have
\begin{align*}
\|\int_{X^d}\bar{K}(y,x)f(y)dy\|_{L^2(X^{d-1}_{x'})}\leq M_{p,k}'C_2^{2-\frac{2}{p}} C_3^{\frac{2}{p}-1}\|f\|_{L^p(X^d)},\,\, f\in C_c^{\infty}(X^d).
\end{align*}
By duality, we have
\begin{align*}
\|\int_{X^{d-1}}K(y,x)g(x')dx'\|_{L^q(X^d_y)}\leq M_{q/(q-1),k}'C_2^{\frac{2}{q}} C_3^{1-\frac{2}{q}}\|g\|_{L^2(X^{d-1})},\,\, x_d\in X,
\end{align*}
where $q=p^*$. By $(\ref{Bezc2})$ and the Minkowski inequality, we obtain
\begin{align*}
\|Kf\|_{L^q(X^d)}\leq& \int_{X}\|\int_{X^{d-1}}K(x,y)f(y) dy'\|_{L^q(X^d_x)} dy_d\\
\leq&M_{q/(q-1),k}'C_2^{\frac{2}{q}} C_3^{1-\frac{2}{q}}\int_{X}\|f(\cdot, y_d)\|_{L^2(X^{d-1}_{y'})} dy_d\\
\leq&M_{q/(q-1),k}'C_2^{\frac{2}{q}} C_3^{1-\frac{2}{q}}\|f\|_{\mathcal{B}}.
\end{align*}

\end{proof}

We impose the additional assumption.

\begin{ass}\label{assd}
There exists $C_4>0$ such that
\begin{align*}
|K(x,y)|\leq C_4(1+|x-y|)^{-k},\,\, x\in X^d.
\end{align*}

\end{ass}

Under Assumption $\ref{assc}$ and $\ref{assd}$, we obtain the estimates similar to $(\ref{int2})$ away from the H\"older exponent.

\begin{prop}\label{away}
Suppose that $K$ and $K^*(x,y)=\bar{K}(y,x)$ satisfy Assumption $\ref{assc}$ and $\ref{assd}$. Then there exists a universal constant $L_{p,,q,k}'>0$ such that
\begin{align*}
\|K f\|_{L^{q}(X^d)}\leq L_{p,q,k}'C_{p,q,k,l}\|f\|_{L^p(X^d)}&,\,\, f\in L^p(X^d),
\end{align*}
where $1/p-1/q=1/l$ and
\begin{align*}
C_{p,q,k,l}=\begin{cases}
C_2^{\frac{2}{p^*}}C_3^{\frac{2}{p}-1}C_4^{1-\frac{2}{q}},\,\, \text{if}\,\, 1\leq p\leq \frac{(k+1)(2k+1)}{k^2+3k+1}, q>\frac{1+2k}{k}, \frac{k+1}{p^*k}\leq \frac{1}{q},\\
C_2^{\frac{2(k+1)}{2k+1}(1-\frac{1}{l}) }C_3^{\frac{2(k+1)-l}{(2k+1)l}}C_4^{\frac{l+2k}{(2k+1)l}},  \,\,  \text{if}\,\,  1\leq l\leq k+1, \frac{k}{(k+1)q}< \frac{1}{p^*}< \frac{k+1}{kq},  \\
C_2^{\frac{2}{q}}C_3^{\frac{2}{q^*}-1}C_4^{1-\frac{2}{p^*}},\,\, \text{if}\,\, 1\leq p<\frac{1+2k}{1+k}, q\geq \frac{(2k+1)(k+1)}{k^2}, \frac{k+1}{kq}\leq \frac{1}{p^*}.
\end{cases}
\end{align*}

\end{prop}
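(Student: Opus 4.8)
The plan is to deduce all three cases from a small number of anchor estimates by interpolation. Two of the anchors are, after rephrasing, Propositions~\ref{abpr1}--\ref{abpr2}: from Assumption~\ref{assc} (via its interpolated form \eqref{assc3}) the $T^*T$-argument of Proposition~\ref{abpr2} produces the cross-sectional bound $\sup_{x_d}\|Kf(\cdot,x_d)\|_{L^2(X^{d-1})}\le CC_2^{2/p^*}C_3^{2/p-1}\|f\|_{L^p(X^d)}$ for $1\le p\le 2(k+1)/(k+2)$ (this is \eqref{int3}/\eqref{int31}), and, since $K^*$ also satisfies Assumption~\ref{assc}, the dual estimate \eqref{int32} with $K$ in place of $K^*$. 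The third anchor comes from Assumption~\ref{assd}: the pointwise bound $|K(x,y)|\le C_4(1+|x-y|)^{-k}$ gives, by Young's inequality together with the endpoint Hardy--Littlewood--Sobolev inequality, an $X^d$-bound $\|Kf\|_{L^q(X^d)}\le CC_4\|f\|_{L^p(X^d)}$ on the fractional-integration line, and — the form actually used here — a cross-sectional bound $\|Kf(\cdot,x_d)\|_{L^{q'}(X^{d-1})}\le CC_4\int_X(1+|x_d-y_d|)^{-\gamma}\|f(\cdot,y_d)\|_{L^{p'}(X^{d-1})}\,dy_d$, with the decay rate $\gamma$ determined by the cross-sectional Young exponent.

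For Case~1 I would interpolate the first anchor — an $L^2(X^{d-1})$-cross-sectional bound that is uniform in $x_d$ — with the cross-sectional $L^{q'}(X^{d-1})$-bound coming from Assumption~\ref{assd}, obtaining a bound of the form $\|Kf(\cdot,x_d)\|_{L^q(X^{d-1})}\le CC_2^{2/p^*}C_3^{2/p-1}C_4^{1-2/q}\big(\textup{convolution in }x_d\big)$, with the first anchor taken at its critical endpoint $p=2(k+1)/(k+2)$ and weighted appropriately for the target $p$; one then takes the $L^q$-norm in $x_d$ and applies the one-dimensional fractional-integration inequality exactly as in the proof of Proposition~\ref{abpr2}. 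The constraints under which the two interpolands are available and under which the one-dimensional step closes translate precisely into $q>(1+2k)/k$, $(k+1)/(p^*k)\le 1/q$ and $1\le p\le (k+1)(2k+1)/(k^2+3k+1)$, and the constant is $C_2^{2/p^*}C_3^{2/p-1}C_4^{1-2/q}$. Case~3 is the transpose of Case~1: apply Case~1 to $K^*$, which satisfies the same hypotheses, and dualize, so that $L^a\to L^b$ becomes $L^{b^*}\to L^{a^*}$; under $(1/p,1/q)\mapsto(1-1/q,1-1/p)$ both the region and the constant of Case~1 turn exactly into those of Case~3, namely $C_2^{2/q}C_3^{2/q^*-1}C_4^{1-2/p^*}$.

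Case~2, the middle regime $1\le l=(1/p-1/q)^{-1}\le k+1$, I would obtain by interpolating the operator bounds of Cases~1 and 3 against one another, which produces the mixed power $C_2^{\frac{2(k+1)}{2k+1}(1-\frac{1}{l})}C_3^{\frac{2(k+1)-l}{(2k+1)l}}C_4^{\frac{l+2k}{(2k+1)l}}$ and the stated range of $l$; alternatively, Stein's three-lines theorem applied to all three anchors at once yields Case~2 directly.

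The main obstacle is the interpolation in the cross-section variable in Case~1 and the bookkeeping of the $x_d$-convolution kernel through it. The point is that the first anchor is an $L^2(X^{d-1})$-estimate that is only uniform in $x_d$, whereas the $C_4$-anchor is a genuine convolution-in-$x_d$ estimate; one must phrase the interpolation as a complex interpolation of a two-parameter family of operators (equivalently, interpolate the Agmon--Hörmander space $\mathcal{B}^*$ against a Lebesgue space and then run the one-dimensional Hardy--Littlewood--Sobolev step), and verify that the decay exponent $\gamma$ so produced is exactly what is needed for the final fractional integration to be admissible. Once this is done, checking that the three displayed constraint sets cover the indicated regions and that the three constants match $C_{p,q,k,l}$ is routine exponent arithmetic, entirely parallel to the passage from Assumption~\ref{assb} to \eqref{int2} in Proposition~\ref{abpr1}.
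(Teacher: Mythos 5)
Your overall plan---derive Case~1, pass to Case~3 by duality, interpolate for Case~2---matches the skeleton of the paper's proof, but the central step is not the routine interpolation you describe, and the way you propose to execute Case~1 has a genuine gap.

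The issue you flag yourself is exactly where the argument breaks. The cross-sectional anchor \eqref{int31} from Proposition~\ref{abpr2} is a bound $L^p(X^d)\to L^\infty_{x_d}L^2_{x'}$ whose right-hand side already consumes the full $L^p(X^d)$-norm of the input; it has no $x_d$-convolution structure. The $C_4$ bound from Assumption~\ref{assd} is a pointwise kernel bound. These two objects do not sit in a single analytic family of operators, and the proposed ``interpolate $\mathcal{B}^*$ against a Lebesgue space'' route is not a valid shortcut: $\mathcal{B}^*$ is an $\ell^\infty$-type mixed norm over dyadic annuli, and any standard interpolation with it produces another $\ell^\infty$-type mixed-norm space, not $L^q(X^d)$ (which requires $\ell^q$-summability across dyadic scales). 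That summability is precisely the content that has to be earned, and it is what the paper's Lemmas~\ref{R2}, \ref{conte} and \ref{techconv} supply: a dyadic decomposition $K^{j,\mathrm{conv}}(x,y)=F((x_d-y_d)/2^j)K(x,y)$, a per-scale $T^*T$-estimate giving $L^{p_1}\to L^2$ with growth $2^{j/2}$, and a per-scale $L^1\to L^\infty$ estimate with decay $2^{-jk}$ from $C_4$. Only after this localization can Riesz--Thorin be applied scale-by-scale (Corollary~\ref{abcor}), yielding the factor $2^{j((1+2k)/q-k)}$ whose summability in $j$ is exactly the boundary $q>(1+2k)/k$. Note also that the $T^*T$ argument naturally wants a \emph{tensor-product} cutoff in $(x_d,y_d)$, not the \emph{convolution-type} cutoff you need for the $C_4$ decay; the paper resolves this tension with Lemma~\ref{conte}, borrowing an idea from the Carleson--Sj\"olin proof, and your proposal does not account for it.

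A second omission: you make no provision for the endpoint exponents (the boundaries of the ranges in Cases~1 and~3, and hence the strong-type Case~2). Direct dyadic summation gives only the open range $q>(1+2k)/k$; the paper obtains the closed endpoint as a restricted weak-type (Lorentz) estimate via Bourgain's interpolation trick, and Case~2 is then recovered by \emph{real} interpolation between those restricted weak-type endpoint estimates. Replacing ``one-dimensional Hardy--Littlewood--Sobolev'' with ``Bourgain's trick plus Lorentz-space real interpolation'' at the boundary is not optional bookkeeping; without it you lose the endpoints of Cases~1 and~3 and you cannot manufacture Case~2 from them.

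In short: you correctly identified which ingredients go in, but the interpolation you describe is not available in the form stated, and the dyadic decomposition you relegate to ``hard bookkeeping'' is in fact the crux of the proof, together with the endpoint Bourgain argument that you omit entirely.
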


We prove this proposition by a series of lemmas.

\begin{lem}\label{R2}
Suppose that $K$ satisfies Assumption $\ref{assc}$. Let $\g\in C_c^{\infty}(\re^2)$. Define $K^j(x,y)=\g((2x_d-z_d)/2^{j+1}, (2y_d-z_d)/2^{j+1})K(x-y)$ for $j$ and $z_d\in X$. Then for $1\leq p\leq 2(k+1)/(k+2)$
\begin{align*}
\|K^jf\|_{L^2(X^d)}\leq L'M_{p,k}'C_2^{2-\frac{2}{p}} C_3^{\frac{2}{p}-1}2^{j/2}\|\g\|_{L^{\infty}(X^2)} \|f\|_{L^p(X^d)}
\end{align*}
with $L'>0$ independent of $z_d\in X$ and $j$.
\end{lem}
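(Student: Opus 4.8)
The plan is to reduce the estimate to the hypotheses of Proposition~\ref{abpr2} after a rescaling in the last variable. First I would observe that the only role of the cutoff $\g((2x_d-z_d)/2^{j+1},(2y_d-z_d)/2^{j+1})$ is to localize both $x_d$ and $y_d$ to an interval of length $O(2^{j})$ around $z_d/2$; call this interval $J_j$. Thus $K^j$ is supported in $(X^{d-1}\times J_j)\times(X^{d-1}\times J_j)$ and is obtained from $K$ by multiplying by a bounded function of $(x_d,y_d)$ alone, so the kernels $S^j_{x_d}(y_d,z_d)$ built from $K^j$ in the sense of Assumption~\ref{assc} are just $\bar\g(\cdots)\g(\cdots)$ times the $S_{x_d}(y_d,z_d)$ built from $K$; hence $K^j$ still satisfies Assumption~\ref{assc} with constants $C_2\|\g\|_{L^\infty}$, $C_3\|\g\|_{L^\infty}$ (the decay factor $(1+|y_d-z_d|)^{-k}$ is only improved by the extra cutoff, so it certainly survives).

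Next I would apply estimate~\eqref{int31} from the proof of Proposition~\ref{abpr2} to $K^j$: for each fixed $x_d$,
\begin{align*}
\|T^j_{x_d}f\|_{L^2(X^{d-1})}\leq M_{p,k}'\,(C_2\|\g\|_{L^\infty})^{2-\frac2p}(C_3\|\g\|_{L^\infty})^{\frac2p-1}\|f\|_{L^p(X^d)}
= M_{p,k}'C_2^{2-\frac2p}C_3^{\frac2p-1}\|\g\|_{L^\infty}\|f\|_{L^p(X^d)},
\end{align*}
valid for $1\leq p\leq 2(k+1)/(k+2)$, where $T^j_{x_d}f(x')=K^jf(x',x_d)$. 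Since the cutoff forces $T^j_{x_d}\equiv 0$ for $x_d\notin J_j$, squaring and integrating in $x_d$ over $J_j$ — an interval of measure comparable to $2^{j}$ (more precisely $\leq 2^{j+1}\,\mathrm{diam}\,(\supp\g)$, absorbed into the universal constant $L'$) — gives
\begin{align*}
\|K^jf\|_{L^2(X^d)}^2=\int_{J_j}\|T^j_{x_d}f\|_{L^2(X^{d-1})}^2\,dx_d
\leq L'\,2^{j}\,\big(M_{p,k}'C_2^{2-\frac2p}C_3^{\frac2p-1}\|\g\|_{L^\infty}\big)^2\|f\|_{L^p(X^d)}^2,
\end{align*}
and taking square roots yields the claimed bound with the stated factor $2^{j/2}$. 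When $X=\ze$ one replaces the integral over $J_j$ by a sum over the $O(2^j)$ lattice points in $J_j$, with no change.

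The only genuine point to check carefully — and the step I expect to be the main (minor) obstacle — is the first one: verifying that multiplying $K$ by the $(x_d,y_d)$-cutoff does not destroy Assumption~\ref{assc}, and in particular tracking that the $L^\infty$ norm of $\g$ enters the two constants exactly to the powers $2-2/p$ and $2/p-1$ so that their product is simply $\|\g\|_{L^\infty}$ (the total homogeneity in $\g$ being degree one, matching the left-hand side's linearity in the kernel). One must also be slightly careful that the form of the cutoff, written in terms of $2x_d-z_d$ rather than $x_d-z_d/2$, still localizes to an interval of length $O(2^j)$ — it does, with the constant depending only on $\supp\g$ and hence absorbable into $L'$. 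Everything else is a direct quotation of the machinery already set up for Proposition~\ref{abpr2}.
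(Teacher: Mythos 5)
Your proposal is correct and follows essentially the same route as the paper's proof: apply the slice estimate~\eqref{int31} from the proof of Proposition~\ref{abpr2} to $K^j$ for each fixed $x_d$, then use that the cutoff $\g$ confines $x_d$ to an interval of length $O(2^{j})$ to pick up the factor $2^{j/2}$ when integrating in $x_d$. The paper's version is terser (it simply says ``Replacing $K$ in~\eqref{int31} with $K^j$'') and does not spell out, as you do, that $K^j$ inherits Assumption~\ref{assc} with constants $C_2\|\g\|_{L^\infty}$, $C_3\|\g\|_{L^\infty}$ and that the exponents $2-2/p$ and $2/p-1$ sum to $1$, but this is exactly the bookkeeping being used there, so your explanation is a faithful (and slightly more careful) rendering of the intended argument.
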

\begin{proof}
We take $L>0$ such that $\supp\g\subset B_L$, where $B_L\subset X^2$ is an open ball with radius $L$ and with center $0$. We observe
\begin{align*}
\|K^jf\|_{L^2(X^d)}^2=\int_{|x_d-z_d/2|\leq L2^j} \|K^jf(\cdot,x_d)\|_{L^2(X^{d-1})}^2 dx_d
\end{align*}
Replacing $K$ in $(\ref{int31})$ with $K^j$, we have
\begin{align*}
\|K^jf(\cdot,x_d)\|_{L^2(X^{d-1})}\leq M_{p,k}'C_2^{2-\frac{2}{p}} C_3^{\frac{2}{p}-1}\|\g\|_{L^{\infty}(X^2)}\|f\|_{L^p(X^d)}.
\end{align*}
We note that there exists $L'>0$ independent of $z_d$ and $j$ such that 
\begin{align*}
(\int_{|x_d-z_d/2|\leq L2^j}dx_d)^{1/2}\leq L'2^{j/2}.
\end{align*}
Combining the above three inequality, we obtain the desired result.
\end{proof}

We need the following technical lemma in order to prove Lemma \ref{techconv} below.
 
\begin{lem}\label{conte}
Let $F\in C_c^{\infty}(\re)$. Then there exists $\g\in C_c^{\infty}(\re^2)$ such that
\begin{align*}
F(\frac{x_d-y_d}{2^j})=&L_j\int_{X}\g(\frac{2x_d-z_d}{2^j}, \frac{2y_d-z_d}{2^j})dz_d,\quad x_d,y_d\in \re,
\end{align*}
where $L_j=2^{-j}$ if $X=\re$ and $2^{-j-2}\leq L_j\leq 2^{-j}$ if $X=\ze$.
\end{lem}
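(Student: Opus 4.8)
The plan is to prove Lemma \ref{conte} by explicitly constructing $\gamma$ from $F$ via a convolution-type identity, treating the continuous case $X = \re$ and the discrete case $X = \ze$ separately but in parallel.

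First I would handle the continuous case. I want to write $F((x_d - y_d)/2^j)$ as an integral over $z_d \in \re$ of a product-type kernel $\gamma((2x_d - z_d)/2^j, (2y_d - z_d)/2^j)$. Changing variables $w = z_d/2^j$ (which produces the Jacobian factor $L_j = 2^{-j}$), the task reduces to finding $\gamma \in C_c^\infty(\re^2)$ with $F(s - t) = \int_\re \gamma(2s - w, 2t - w)\,dw$ where $s = x_d/2^j$, $t = y_d/2^j$. A natural ansatz is $\gamma(a,b) = \chi(a)\chi(b) G(a - b)$ for a suitable bump function $\chi \in C_c^\infty(\re)$ that equals $1$ on a large interval, and some $G$ to be determined; but actually the cleanest choice is to note that $2s - w - (2t - w) = 2(s-t)$, so setting $\gamma(a,b) = \phi(a)\phi(b)$ won't directly give the right difference structure. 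Instead I would try $\gamma(a,b) = \psi(a - b)\rho\big(\tfrac{a+b}{2}\big)$ for bump functions: then $a - b = 2(s-t)$ is independent of $w$, and $\tfrac{a+b}{2} = s + t - w$ ranges over all of $\re$ as $w$ does, so $\int_\re \gamma(2s-w, 2t-w)\,dw = \psi(2(s-t)) \int_\re \rho(s+t-w)\,dw = \psi(2(s-t)) \int_\re \rho$. Choosing $\rho$ with $\int_\re \rho = 1$ and $\psi(r) = F(r/2)$ gives exactly $F(s-t)$, and $\gamma$ is compactly supported and smooth since $F$ and $\rho$ are.

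Next I would adapt this to $X = \ze$, where the integral $\int_X dz_d$ is a sum over $z_d \in \ze$. The same ansatz $\gamma(a,b) = \psi(a-b)\rho(\tfrac{a+b}{2})$ with $\psi(r) = F(r/2)$ gives $\sum_{z_d \in \ze} \gamma((2x_d - z_d)/2^j, (2y_d - z_d)/2^j) = \psi(2(x_d - y_d)/2^j) \sum_{z_d \in \ze} \rho((x_d + y_d - z_d)/2^j)$. The remaining point is that $\sum_{z_d \in \ze} \rho((x_d + y_d - z_d)/2^j)$ is a Riemann-type sum for $2^j \int_\re \rho$, and we need it to be comparable to $2^j$ (giving $L_j^{-1}$ with $2^{-j-2} \le L_j \le 2^{-j}$) and, crucially, \emph{independent of the offset} $x_d + y_d$. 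Independence of the offset fails for a generic $\rho$, so the key trick is to choose $\rho \geq 0$ supported in $[0,1)$ with $\rho(r) + \rho(r-1) + \cdots$ telescoping — more precisely, pick $\rho$ so that $\sum_{n \in \ze} \rho(r - n) \equiv 1$ for all $r \in \re$ (a partition of unity on $\re$ subordinate to the integer lattice); then $\sum_{z_d \in \ze} \rho((x_d+y_d - z_d)/2^j)$ equals the number of lattice points in a window of length $2^j$ up to the partition-of-unity correction, which lies between $2^j$ and $2^j + C$ for a fixed constant, yielding $2^{-j-2} \le L_j \le 2^{-j}$ after adjusting constants. One must also double-check that $x_d, y_d \in \ze$ (not $\re$) is what is needed for this counting — indeed in the discrete case the statement only asserts the identity for $x_d, y_d \in \ze$, which is consistent.

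The main obstacle I anticipate is precisely the discrete case: arranging that the lattice sum $\sum_{z_d \in \ze} \rho((x_d + y_d - z_d)/2^j)$ is genuinely independent of the offset $x_d + y_d$ (so that $L_j$ depends only on $j$, not on $x_d, y_d$), rather than merely bounded above and below. Using a lattice partition of unity $\sum_n \rho(\cdot - n) \equiv 1$ resolves this cleanly, but one should be careful that $2^j$ need not be an integer for general $j$, so the window-counting argument must be phrased in terms of $\sum_n \rho(t/2^j - n)$ for real $t$, where the partition-of-unity identity still applies after rescaling only if $2^j$ is an integer; for non-integer scales one instead bounds the sum directly using $\rho \in C_c^\infty$, $\rho \ge 0$, and $\int \rho = 1$, getting two-sided bounds with the explicit constants $2^{-j-2}$ and $2^{-j}$ as claimed. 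The continuous case is then essentially immediate from the computation above, and I would present it first as motivation.
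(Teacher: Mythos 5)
Your construction is essentially the paper's own, up to reparametrization: the paper sets $\gamma(z,z') = F(z-z')\chi_2(z+z')$ for a bump $\chi_2$, which is your $\gamma(a,b) = \psi(a-b)\,\rho\big(\tfrac{a+b}{2}\big)$ after a change of variables in the bump, and the continuous case is then immediate in both write-ups. The point of divergence is the discrete case.

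The paper's discrete argument is simpler than yours and uses no partition of unity. Since the identity is only needed for $x_d, y_d \in \ze$ (so $x_d + y_d \in \ze$; the statement's ``$x_d,y_d\in\re$'' is a slip, harmless for the application), the map $z_d \mapsto x_d + y_d - z_d$ is a bijection of $\ze$, hence
\begin{align*}
\sum_{z_d\in\ze}\chi_2\Big(\frac{x_d+y_d-z_d}{2^j}\Big)=\sum_{m\in\ze}\chi_2\Big(\frac{m}{2^j}\Big),
\end{align*}
which is manifestly independent of $x_d, y_d$. One then takes $\chi_2 \in C_c^\infty(\re,[0,1])$ with $\chi_2 = 1$ on $[-1,1]$ and $\chi_2 = 0$ outside $(-2,2)$, and counts lattice points to get $2^j \le \sum_m \chi_2(m/2^j) \le 2^{j+2}$. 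That is the entire discrete argument.

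Your lattice partition-of-unity route is a valid alternative, and in fact gives the stronger conclusion that the sum equals exactly $2^j$ when $2^j \in \na$: split $\ze$ into residue classes mod $2^j$ and apply $\sum_n \rho(r - n) = 1$ in each class. But two things in your write-up are off. First, you hedge that the identity ``applies after rescaling only if $2^j$ is an integer'' and propose a fallback of bounding the sum directly for non-integer scales; that fallback does not close the argument, because the lemma requires $L_j$ to be independent of $x_d, y_d$, and direct bounds a priori depend on the offset $x_d + y_d$. Second, the hedge is moot anyway: in the paper $j$ is a non-negative integer (cf.\ Lemma~\ref{techconv}), so $2^j$ is always an integer. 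Commit to one independence argument --- integer translation invariance of the $\ze$-sum (the paper's route, simplest) or residue classes mod $2^j$ (yours) --- and drop the fallback.
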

\begin{proof}

We define $\g\in C_c^{\infty}(\re^2)$ as follows:
Take $\chi_2\in C_c^{\infty}(\re, [0,1])$ such that $\int_{\re}\chi_2(x)dx=2$ and $\supp \chi_2\subset (-1/2,1/2)$ if $X=\re$ and such that $\chi_2(t)=1$ on $|t|\leq 1$ and $\chi(t)=0$ on $|t|\geq 2$ if $X=\ze$. We define $\g(z,z')=F(z-z')\chi_2(z+z')$, Then  $F(x_d)=\int_{X}\g(x_d+z,z)dz$ if $X=\re$ and 
\begin{align*}
\int_{X}\g(\frac{2x_d-z_d}{2^{j+1}}, \frac{2y_d-z_d}{2^{j+1}})dz_d=&\sum_{z_d\in \ze}\g(\frac{2x_d-z_d}{2^{j+1}}, \frac{2y_d-z_d}{2^{j+1}})\\
=&F(\frac{x_d-y_d}{2^j})\sum_{z_d\in \ze}\chi_2(\frac{z_d}{2^j}).
\end{align*}
if $X=\ze$. We note
\begin{align*}
2^j \leq \sum_{z_d\in \ze}\chi_2(\frac{z_d}{2^j})\leq 2^{j+2}.
\end{align*}
We set $L_j=1$ if $X=\re$ and $L_j=\sum_{z_d\in \ze}\chi_2(\frac{z_d}{2^j})$ if $X=\ze$ and we are done.
\end{proof}

The following lemma is a consequence of Lemma \ref{R2}, however its proof is a bit technical due to the convolution type cut-off. The conclusion of the following lemma is same as \cite[Lemma 1]{Gu}, where the uniform resolvent estimate of the Laplacian is studied. However, since their proof strongly depends on the spherical symmetry of the Laplacian and the Stein-Tomas theorem for the sphere, we cannot directly apply their argument to our cases. In order to overcome this difficulty,  we borrow an idea from the proof of the Carleson-Sj\"olin theorem \cite[Theorem 2.1]{H}.

\begin{lem}\label{techconv}
Suppose that $K$ satisfies Assumption $\ref{assc}$. Let $F\in C_c^{\infty}(\re)$.  Define $K^{j,\mathrm{conv}}(x,y)=F((x_d-y_d)/2^j)K(x,y)$ for non-negative integer $j$. Then for $1\leq p\leq 2(k+1)/(k+2)$, there exists a universal constant $M_{p,k}''$ such that
\begin{align}\label{l^p}
\|K^{j,\mathrm{conv}} f(x)\|_{L^2(X^d)}\leq M_{p,k}''C_2^{2-\frac{2}{p}} C_3^{\frac{2}{p}-1}2^{\frac{1}{2}j} \|f\|_{L^p(X^d)}
\end{align}
\end{lem}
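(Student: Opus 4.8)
The plan is to deduce Lemma \ref{techconv} from Lemma \ref{R2} by writing the "convolution-type" cutoff $F((x_d-y_d)/2^j)$ as a superposition over the extra parameter $z_d$ of the "product-type" cutoffs $\g((2x_d-z_d)/2^{j+1},(2y_d-z_d)/2^{j+1})$ handled in Lemma \ref{R2}. Concretely, apply Lemma \ref{conte} to the fixed function $F$ to produce $\g\in C_c^\infty(\re^2)$ with
\begin{align*}
K^{j,\mathrm{conv}}(x,y)=F\Bigl(\tfrac{x_d-y_d}{2^j}\Bigr)K(x,y)=L_{j}\int_X \g\Bigl(\tfrac{2x_d-z_d}{2^{j+1}},\tfrac{2y_d-z_d}{2^{j+1}}\Bigr)K(x,y)\,dz_d=L_j\int_X K^{j,z_d}(x,y)\,dz_d,
\end{align*}
where $K^{j,z_d}$ is exactly the kernel $K^j$ of Lemma \ref{R2} with the parameter value $z_d$ (note $j$ here plays the role of $j$ there after matching $2^{j+1}$). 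So $K^{j,\mathrm{conv}}f=L_j\int_X K^{j,z_d}f\,dz_d$ and the triangle/Minkowski inequality reduces matters to integrating the bound from Lemma \ref{R2} over $z_d$.

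The key point making this work is support localization: $\g$ is supported in a ball $B_L\subset\re^2$, so $K^{j,z_d}(x,y)$ vanishes unless $|2x_d-z_d|\le L2^{j+1}$, i.e. $|z_d-2x_d|\lesssim 2^j$; similarly $|z_d-2y_d|\lesssim 2^j$. Hence for a fixed $x_d$ only a $z_d$-interval of length $O(2^j)$ contributes, and — crucially — $K^{j,z_d}f(x',x_d)$ is nonzero only when $x_d$ lies within $O(2^j)$ of $z_d/2$. I would therefore estimate, using Lemma \ref{R2} fiber-wise in $x_d$,
\begin{align*}
\|K^{j,\mathrm{conv}}f\|_{L^2(X^d)}\le L_j\Bigl\|\int_X \|K^{j,z_d}f(\cdot,x_d)\|_{L^2(X^{d-1})}\,dz_d\Bigr\|_{L^2(X_{x_d})}.
\end{align*}
For each $x_d$ the inner $z_d$-integral runs effectively over an interval of length $\lesssim 2^j$, so by Cauchy–Schwarz in $z_d$ it is bounded by $(C2^j)^{1/2}\bigl(\int_X \|K^{j,z_d}f(\cdot,x_d)\|_{L^2(X^{d-1})}^2\,dz_d\bigr)^{1/2}$; then squaring, integrating in $x_d$, and interchanging the $x_d$ and $z_d$ integrals (Tonelli) turns the $L^2(X_{x_d})$ norm into $\int_X \|K^{j,z_d}f\|_{L^2(X^d)}^2\,dz_d$. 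Applying Lemma \ref{R2}, each summand is $\lesssim (M'_{p,k})^2(C_2^{2-2/p}C_3^{2/p-1})^2 2^{j}\|\g\|_{L^\infty}^2\|f\|_{L^p(X^d)}^2$; but again, for this last integral to be finite I must restrict $z_d$ to the range where $K^{j,z_d}f$ can be nonzero, namely $|z_d-2y_d|\lesssim 2^j$ for some $y_d$ in the (a priori arbitrary) support of $f$ — this is the subtle point, since $f$ is not compactly supported uniformly. The fix is to not integrate Lemma \ref{R2}'s global bound but instead to note that, after the Cauchy–Schwarz step above, we already reduced to $\int_X\|K^{j,z_d}f\|_{L^2(X^d)}^2\,dz_d$, and in that quantity we may expand $\|K^{j,z_d}f\|_{L^2(X^d)}^2=\int_X\|K^{j,z_d}f(\cdot,x_d)\|^2_{L^2(X^{d-1})}\,dx_d$ and use that for fixed $x_d$ only $z_d\in[2x_d-C2^j,2x_d+C2^j]$ contributes, so the $z_d$-integral of the fiber norm squared is $\lesssim 2^j\sup_{z_d}\|K^{j,z_d}f(\cdot,x_d)\|^2$; applying the fiber version of \eqref{int31} and integrating in $x_d$ gives the claim with an extra harmless $2^j$ already absorbed. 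Collecting constants (including $L_j\le 2^{-j}$ and the two factors of $(2^j)^{1/2}$) yields the net power $2^{j/2}$, and defining $M''_{p,k}$ from $L',M'_{p,k}$, $\|\g\|_{L^\infty}$ and the ball radius $L$ completes the proof.

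The main obstacle is precisely this bookkeeping of supports in the $z_d$-variable: naively integrating the $z_d$-uniform bound of Lemma \ref{R2} over all of $X$ diverges, so the argument must exploit at each stage that the cutoffs $\g((2\cdot-z_d)/2^{j+1},\cdot)$ localize both $x_d$ and $z_d$ to overlapping intervals of length $O(2^j)$, and one must order the Cauchy–Schwarz and Tonelli steps so that every integral that appears is over such a bounded interval. Once the localization is tracked correctly, only the single power $2^{j/2}$ survives — matching \cite[Lemma 1]{Gu} — and no spherical symmetry or Stein–Tomas input is needed, which is the whole point of routing through Lemma \ref{R2} rather than imitating \cite{Gu} directly.
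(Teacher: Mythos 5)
Your reduction to Lemma \ref{R2} via Lemma \ref{conte}, the Cauchy--Schwarz step in $z_d$ (giving one factor $2^{j}$ on the squared norm), and the identification of the divergence problem — all of this matches the paper. But the ``fix'' you propose for the divergence does not actually close the argument. After Tonelli you are looking at $\int_{x_d}\int_{z_d}\|K^{j,z_d}f(\cdot,x_d)\|_{L^2(X^{d-1})}^{2}\,dz_d\,dx_d$, and you bound the inner $z_d$-integral by $2^{j}\sup_{z_d}\|K^{j,z_d}f(\cdot,x_d)\|^{2}$ and then apply the fiber estimate \eqref{int31} to the sup. The trouble is that at that point the bound $\sup_{z_d}\|K^{j,z_d}f(\cdot,x_d)\|^{2}\lesssim \|\g\|_{L^\infty}^{2}\|f\|_{L^p(X^d)}^{2}$ is \emph{uniform in} $x_d$, and the remaining integral $\int_{x_d}\,dx_d$ is over all of $X$, so it diverges. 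Taking $\sup_{z_d}$ discards exactly the information (the localization in $y_d$ encoded in the $z_d$-dependence of $\g$) that is needed to make the $x_d$-integral finite; there is no ``extra harmless $2^{j}$'' that saves it.

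The missing idea is the cutoff $\f$ on the second slot of $\g$. Choose $\f\in C_c^\infty(\re)$ with $\g(\x_d,\y_d)=\g(\x_d,\y_d)\f(\y_d)$; then $K^{j,z_d}f=K^{j,z_d}\bigl(\f\bigl(\tfrac{2\,\cdot\,-z_d}{2^{j+1}}\bigr)f\bigr)$, so one should apply Lemma \ref{R2} (which already contains the $x_d$-localization and produces the $2^{j/2}$) to the \emph{localized input}, giving
\begin{align*}
\int_{X}\|K^{j,z_d}f\|_{L^2(X^d)}^{2}\,dz_d\lesssim 2^{j}\int_{X}\Bigl\|\f\Bigl(\tfrac{2\,\cdot\,-z_d}{2^{j+1}}\Bigr)f\Bigr\|_{L^p(X^d)}^{2}\,dz_d,
\end{align*}
and the last integral is finite precisely because $p\le 2$: Minkowski's integral inequality pulls the $L^p_y$-norm outside the $L^2_{z_d}$-norm and yields $\lesssim 2^{j}\|\f\|_{L^2}^{2}\|f\|_{L^p(X^d)}^{2}$. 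Collecting $L_j^{2}\approx 2^{-2j}$, one $2^{j}$ from Cauchy--Schwarz, one $2^{j}$ from Lemma \ref{R2}, and one $2^{j}$ from Minkowski then gives the claimed $2^{j/2}$. This $\f$-trick plus Minkowski is the step your proposal omits; without it the $\sup_{z_d}$/$\int_{x_d}$ ordering cannot produce a finite bound.
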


\begin{proof}

By Lemma \ref{conte}, we have
\begin{align*}
|K^{j,\mathrm{conv}}f(x) |\leq 2^{-2j}|\int_{X}K^{j,z_d}(x,y)dz_d |,
\end{align*}
where we set $K^{j,z_d}(x,y)=K(x,y)\g((2x_d-z_d)/2^{j+1}, (2y_d-z_d)/2^{j+1})$.
Take $\f\in C_c^{\infty}(\re)$ such that $\g(x_d,y_d)=\g(x_d,y_d)\f(y_d)$.  We take $L>0$ such that $\supp\g\subset B_L$, where $B_L\subset X^2$ is an open ball with radius $L$ and with center $0$. We note
\begin{align*}
|\{z_d\in X\mid \g(\frac{2x_d-z_d}{2^{j+1}},\frac{2y_d-z_d}{2^{j+1}} )\neq 0\}|\leq L2^{j+1}.
\end{align*}
Set $M=(M_{p,k}'C_2^{2-\frac{2}{p}}C_3^{\frac{2}{p}-1})^2$. Using the Cauchy-Schwarz inequality and Lemma \ref{R2}, we have
\begin{align*}
\int_{X^d}|\int_{X}K^{j,z_d}f(x)dz_d |^2dx\leq& L2^{j+1} \int_{X}\int_{X^d}|K^{j,z_d}f(x) |^2dxdz_d\\
\leq& 2LL'M 2^{2j} \int_{X}\|\f(\frac{2\cdot-z_d}{2^{j+1}})f\|_{L^p(X^d)}^2  dz_d.
\end{align*}
Since $p\leq 2$, by using the Minkowski inequality, we have
\begin{align*}
\int_{X}\|\f(\frac{2\cdot-z_d}{2^{j+1}})f\|_{L^p(X^d)}^2  dz_d\leq& \|\f(\frac{2\cdot-z_d}{2^{j+1}})\|_{L^2(X)}^2 \|f\|_{L^p(X^d)}^2\\
\leq &L''2^{j}\|\f\|_{L^2(X)}^2 \|f\|_{L^p(X^d)}^2
\end{align*}
with $L''$ depends only on $\f$.
Thus we obtain
\begin{align*}
\int_{X^d}|K^{j,conv}f(x)|^2dx\leq (M_{p,k}''C_2^{2-\frac{2}{p}}C_3^{\frac{2}{p}-1})^22^j\|f\|_{L^p(X^d)}^2,
\end{align*}
where $(M_{p,k}'')^2=2LL'L''(M_{p,k}')^2\|\f\|_{L^2(X)}^2$.
\end{proof}

\begin{cor}\label{abcor}
Suppose that $K$ satisfies Assumption \ref{assd}. Then there exists a constant $L_1>0$ which depends only on $F$, $d$ and $k$ such that
\begin{align}\label{l^1}
\|K^{j,conv} f\|_{L^{\infty}(X^d)}\leq L_1C_42^{-jk}\|f\|_{L^1(X^d)}.
\end{align}
In addition, we suppose that $K$ and $K^*(x,y)=\bar{K}(y,x)$ satisfy Assumption \ref{assc}. Set $1/p_1=1-q/2p^*$ and $L_{2,p,q}=(M_{p_1,k}'')^{2/q}L_1^{1-2/q}$. Then
\begin{align}\label{l^p1}
\|K^{j,conv} f\|_{L^q(X^d)}\leq L_{2,p,q}C_2^{\frac{2}{p^*}}C_3^{\frac{2}{p}-1}C_4^{1-\frac{2}{q}}2^{j\frac{1+2k}{q}-jk} \|f\|_{L^p(X^d)}
\end{align}
if $q\geq 2$ and $(k+1)(1-1/p)/k\leq 1/q$ and
\begin{align}\label{l^p2}
\|K^{j,conv} f\|_{L^q(X^d)}\leq L_{2,q^*,p^*}C_2^{\frac{2}{q}}C_3^{\frac{2}{q^*}-1}C_4^{1-\frac{2}{p^*}}2^{j\frac{(1+2k)}{p^*}-jk} \|f\|_{L^p(X^d)}
\end{align}
if $p\leq 2$ and $(k+1)/(kq)\leq 1-1/p$.

\end{cor}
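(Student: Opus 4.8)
The plan is to establish the three displayed inequalities \eqref{l^1}, \eqref{l^p1}, \eqref{l^p2} in turn, starting from the pointwise kernel bound in Assumption \ref{assd} and then interpolating against the $L^p\to L^2$ bound \eqref{l^p} of Lemma \ref{techconv}. First I would prove \eqref{l^1}. Since $K^{j,\mathrm{conv}}(x,y)=F((x_d-y_d)/2^j)K(x,y)$ and $|K(x,y)|\leq C_4(1+|x-y|)^{-k}$ by Assumption \ref{assd}, the Young / Schur argument gives
\begin{align*}
\|K^{j,\mathrm{conv}}f\|_{L^\infty(X^d)}\leq C_4\sup_{x}\int_{X^d}|F((x_d-y_d)/2^j)|(1+|x-y|)^{-k}|f(y)|\,dy\leq L_1C_42^{-jk}\|f\|_{L^1(X^d)},
\end{align*}
where the factor $2^{-jk}$ comes from the fact that on the support of $F((x_d-y_d)/2^j)$ one has $|x_d-y_d|\lesssim 2^j$, hence $(1+|x-y|)^{-k}\lesssim 2^{-jk}$ uniformly in $x',y'$, and the remaining $x',y'$ integral is bounded by a constant depending only on $F$, $d$ and $k$. (One splits the $(1+|x-y|)^{-k}$ factor so that a tail of it absorbs the transverse integration and the rest produces $2^{-jk}$; here $k>d-1$ is used, which is implicit in the range of $p$ allowing a nontrivial estimate.) This is the easy step.

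Next I would prove \eqref{l^p1} by complex (Riesz--Thorin) interpolation between Lemma \ref{techconv} and \eqref{l^1}. Lemma \ref{techconv} gives, for $1\leq p_1\leq 2(k+1)/(k+2)$,
\begin{align*}
\|K^{j,\mathrm{conv}}f\|_{L^2(X^d)}\leq M_{p_1,k}''C_2^{2-2/p_1}C_3^{2/p_1-1}2^{j/2}\|f\|_{L^{p_1}(X^d)},
\end{align*}
while \eqref{l^1} is the endpoint $L^1\to L^\infty$ with constant $L_1C_42^{-jk}$. Interpolating with parameter $\theta$ chosen so that $1/q=\theta/2$ (so $\theta=2/q$, $q\geq 2$) forces $1/p=(1-\theta)+\theta/p_1=1-2/q+2/(qp_1)$, i.e. $1/p_1=1-q/2p^*$ as in the statement; the condition $p_1\leq 2(k+1)/(k+2)$ translates exactly into $(k+1)(1-1/p)/k\leq 1/q$. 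The resulting operator norm is
\begin{align*}
(M_{p_1,k}''C_2^{2-2/p_1}C_3^{2/p_1-1}2^{j/2})^{2/q}(L_1C_42^{-jk})^{1-2/q},
\end{align*}
and a direct computation of the exponents of $C_2,C_3,C_4$ and of $2^j$ gives $C_2^{2/p^*}C_3^{2/p-1}C_4^{1-2/q}2^{j(1+2k)/q-jk}$ with constant $L_{2,p,q}=(M_{p_1,k}'')^{2/q}L_1^{1-2/q}$, as claimed. Finally \eqref{l^p2} follows from \eqref{l^p1} by a duality argument: the adjoint of $K^{j,\mathrm{conv}}$ has kernel $\overline{K^{j,\mathrm{conv}}(y,x)}=\overline{F((x_d-y_d)/2^j)}\,\overline{K(y,x)}=\overline{F}((y_d-x_d)/2^j)K^*(x,y)$, and since $\overline F\in C_c^\infty(\re)$ and $K^*$ satisfies Assumptions \ref{assc} and \ref{assd} with the same constants, \eqref{l^p1} applied to this adjoint kernel with $(p,q)$ replaced by $(q^*,p^*)$ and then dualized yields \eqref{l^p2}, the exponent conditions going over to $p\leq 2$ and $(k+1)/(kq)\leq 1-1/p$.

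The main obstacle is bookkeeping rather than conceptual: one must verify that the interpolation parameters, the admissibility range $1\leq p_1\leq 2(k+1)/(k+2)$ for Lemma \ref{techconv}, and the stated side conditions on $(1/p,1/q)$ all match up, and that the powers of $2^j$ and of the three constants $C_2,C_3,C_4$ combine to exactly the exponents written in \eqref{l^p1}--\eqref{l^p2}; the only place genuine care is needed is in the proof of \eqref{l^1}, where the transverse ($x',y'$) integration of $(1+|x-y|)^{-k}$ must be controlled uniformly and one has to extract precisely the factor $2^{-jk}$ from the longitudinal localization, which requires $k$ to exceed the transverse dimension $d-1$ (automatically granted in the regime where the hypotheses are non-vacuous).
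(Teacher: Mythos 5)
Your approach matches the paper's exactly: bound $\|K^{j,\mathrm{conv}}\|_{L^1\to L^\infty}$ by the sup-norm of the kernel, Riesz--Thorin interpolate against \eqref{l^p} with $\theta=2/q$ to obtain \eqref{l^p1}, and dualize for \eqref{l^p2}. Your interpolation bookkeeping ($1/p_1=1-q/(2p^*)$, the $2^j$-exponent, and the translation of $p_1\le 2(k+1)/(k+2)$ into the stated condition on $(p,q)$) and the duality step are handled correctly.

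However, the justification you offer for \eqref{l^1} is incorrect as written. The only thing to prove is $\sup_{x,y}|K^{j,\mathrm{conv}}(x,y)|\le L_1 C_4 2^{-jk}$, and this holds because $F$, as chosen in the proof of Proposition \ref{away}, vanishes for $|t|\le 1$: on $\supp F((x_d-y_d)/2^j)$ one then has $|x-y|\ge |x_d-y_d|\ge 2^j$, hence $(1+|x-y|)^{-k}\le 2^{-jk}$ by Assumption \ref{assd}. You assert the opposite inequality $|x_d-y_d|\lesssim 2^j$ and claim this gives $(1+|x-y|)^{-k}\lesssim 2^{-jk}$; that implication is false (an upper bound on $|x_d-y_d|$ yields no decay at all — if $F(0)\neq 0$ the kernel stays of size $C_4$ on the diagonal). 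The parenthetical about splitting $(1+|x-y|)^{-k}$ to absorb a "remaining $x',y'$ integral," requiring $k>d-1$, is also spurious: in an $L^1\to L^\infty$ bound there is no surviving transverse integral, and no lower bound on $k$ is needed. Replace this passage with the one-line observation that $F$ is supported away from the origin.

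(A small bookkeeping remark: carrying the interpolation through, the exponent of $C_3$ comes out as $2/q-2/p^*$, not $2/p-1$; these agree only when $q=2$. You have reproduced the exponent as it appears in the statement, but your claim to have computed it directly is not accurate. The discrepancy does not affect the validity or use of the estimate.)
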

\begin{proof}
$(\ref{l^1})$ follows from
\begin{align*}
\|K^{j,conv} f\|_{L^{\infty}(X^d)}\leq& \|F(\cdot/2^j) K\|_{L^{\infty}(X^d)}\|f\|_{L^1(X^d)}\\
\leq&L_1C_42^{-jk}\|f\|_{L^1(X^d)}
\end{align*}
with some constant $L_1>0$ by Assumption \ref{assd}.
By complex interpolating $(\ref{l^p})$ and $(\ref{l^1})$, we obtain $(\ref{l^p1})$. Since $K^*$ also satisfies Assumption \ref{assc} and \ref{assd}, by duality, $(\ref{l^p2})$ holds.

\end{proof}

\begin{proof}[Proof of Proposition \ref{away}]
Take $\y\in C_c^{\infty}(\re,[0,1])$ such that $\y(t)=1$ on $0\leq t\leq 1$ and $\y=0$ on $t\geq 2$. Set $F(x)=\y(|x|)-\y(|x|/2)$. 
By Corollary \ref{abcor}, for $(k+1)(1-1/p)/k\leq 1/q$, $q>(1+2k)/k$, we have
\begin{align*}
\|K f\|_{L^q(X^d)}=&\|\sum_{j=0}^{\infty}K^{j, conv} f\|_{L^q(X^d)}
\leq\sum_{j=0}^{\infty}\|K^{j, conv} f\|_{L^q(X^d)}\\
\leq&L_{2,p,q}C_2^{\frac{2}{p^*}}C_3^{\frac{2}{p}-1}C_4^{1-\frac{2}{q}}\sum_{j=0}^{\infty}2^{j/2+jd(1/q-1/2)} \|f\|_{L^p(X^d)}\\
\leq&L_{2,p,q}'C_2^{\frac{2}{p^*}}C_3^{\frac{2}{p}-1}C_4^{1-\frac{2}{q}}\|f\|_{L^p(X^d)},
\end{align*}
where $L_{2,p,q}'=L_{2,p,q}\sum_{j=0}^{\infty}2^{j/2+jd(1/q-1/2)}$. Similarly, for $(k+1)/(kq)\leq 1-1/p$, $p<(1+2k)/(1+k)$, we have
\begin{align*}
\|K f\|_{L^q(X^d)}\leq L_{2,q^*,p^*}'C_2^{\frac{2}{p^*}}C_3^{\frac{2}{p}-1}C_4^{1-\frac{2}{p^*}}  \|f\|_{L^p(X^d)}.
\end{align*}

In order to prove the end point estimates, we use Bourgain's interpolation trick (\cite{B}, \cite[\S 6.2]{CSWW}, \cite[Lemma 3.3]{JKL}). This trick is also used in \cite{BS} for the Stein-Tomas theorem for a large class of measures in Euclidean space. See also \cite{FSc} and \cite{Gu}.
We denote the Lorentz space for index $1\leq p\leq \infty$ and $1\leq r\leq \infty$ by $L^{p,r}(X^d)$:
\begin{align*}
&\|f\|_{L^{p,r}(X^d)}=\begin{cases}
p^{\frac{1}{r}}(\int_0^{\infty}\m(\{x\in X^d\,|\, |f(x)|>\a\})^{\frac{r}{p}} \a^{r-1}d\a)^{\frac{1}{r}},\quad &r<\infty,\\
\sup_{\a>0}\a\m(\{x\in X^d\mid |f(x)|>\a\})^{\frac{1}{p}},\quad &r=\infty,
\end{cases}\\
&L^{p,r}(X^d)=\{f:X^d\to \mathbb{C}\mid f:\text{measurable},\, \|f\|_{L^{p,r}(X^d)}<\infty\}.
\end{align*}

Bourgain's interpolation trick with $(\ref{l^p1})$ and $(\ref{l^p2})$ implies that for $1\leq p\leq (k+1)(2k+1)/(k^2+3k+1), q=(1+2k)/k$, it follows that
\begin{align*}
\|K f\|_{L^{q,\infty}(X^d)}\leq& L_{2,p,q}'C_2^{\frac{2}{p^*}}C_3^{\frac{2}{p}-1}C_4^{1-\frac{2}{q}}\|f\|_{L^{p,1}(X^d)}
\end{align*}
with a universal constant $L_{2,p,q}'$. Similarly, for $p=(1+2k)/(1+k)$, $q\geq (2k+1)(k+1)/k^2$, we have
\begin{align*}
\|K f\|_{L^{q,\infty}(X^d)}\leq L_{2,q^*,p^*}'C_2^{\frac{2}{q}}C_3^{\frac{2}{q^*}-1}C_4^{1-\frac{2}{p^*}}  \|f\|_{L^{p,1}(X^d)}.
\end{align*}
By real interpolating above estimates, we complete the proof.

\end{proof}

\section{Uniform resolvent estimates}

\subsection{Proof of Theorem \ref{mainprop} $(i)$ and $(ii)$}
\begin{proof}[Proof of Theorem \ref{mainprop} $(i)$ and $(ii)$]
We follows the argument as in \cite[Lemma 3.3]{C2}.
By using a partition of unity and a linear coordinate change, we may assume that $\pa_{\x_d}T\neq 0$ on $\supp \chi$. Moreover, by the implicit function theorem, we may assume that for $\l\in I$, $M_{\l}$ has the following graph representation:
\begin{align*}
M_{\l}\cap \supp\chi \subset \{(\x', h_{\l}(\x')) \in \widehat{X^{d}} \mid \x'\in U\}
\end{align*}
for some relativity compact open set $U\subset \widehat{X^{d-1}}$ and $h_{\l}$ which is smooth with respect to $\x'\in U$ and $\l\in I$ and
\begin{align}\label{Timp}
T(\x)-\l=e(\x,\l)(\x_d-h_{\l}(\x')),
\end{align}
where $e(\x,\l)=\int_0^1(\pa_{\x_d}T)(\x', t\x_d+(1-t)h_{\l}(\x'))dt$. Furthermore, we may assume $\min_{\x\in \supp\chi, \l\in A} e(\x,\l)>0$ if necessary, we take $\supp \chi$ small. Set 
\begin{align*}
K_{z,\pm}(x)=&\int_{\widehat{X^d}}\frac{e^{2\pi ix\cdot \x}\chi(\x)}{T(\x)-z}d\x,\\
K_{z,w,\pm}(x)=&K_{z,\pm}(x)-K_{w,\pm}(x),
\end{align*}
where $ \l=\Re z, \m=\Re w\in I$ and $\pm \Im z, \pm \Im w\geq 0$.
In order to prove Theorem \ref{mainprop}, it suffices to show that $K_{z,\pm}$ satisfies Assumptions \ref{assc} and \ref{assd}, and that $K_{z,w,\pm}$ satisfies Assumption \ref{assb}.
\begin{lem}\label{unilem1}
Fix a signature $\pm$. For any $0\leq \d \leq 1$, there exists $C_0, C_1, C_{1,\d}>0$ such that for $x=(x',x_d)\in X^d$, $z,w\in I_{\pm}$ with $|z-w|\leq 1$, we have
\begin{align*}
&\sup_{\x'\in \widehat{X^{d-1}}}|\int_{X^{d-1}}K_{z,\pm}(y',x_d)e^{-2\pi iy'\cdot \x'}dy'|\leq C_0,\,\, |K_{z,\pm}(x)|\leq C_1(1+|x|)^{-k}\\
&\sup_{\x'\in \widehat{X^{d-1}}}|\int_{X^{d-1}}K_{z,w,\pm}(y',x_d)e^{-2\pi iy'\cdot \x'}dy'|\leq 2C_0,\\
&|K_{z,w,\pm}(x)|\leq C_1' |z-w|^{\d}(1+|x|)^{-k+\d}
\end{align*}
\end{lem}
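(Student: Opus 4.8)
The goal is to establish the four estimates in Lemma~\ref{unilem1} for the kernel $K_{z,\pm}(x)$ and its difference $K_{z,w,\pm}(x)$, after the preparatory reductions already made (namely $\pa_{\x_d}T\neq 0$ on $\supp\chi$, the graph representation $M_\l\cap\supp\chi\subset\{\x_d=h_\l(\x')\}$, and the factorization $(\ref{Timp})$). The central observation is that after the factorization $T(\x)-z = e(\x,\l)(\x_d-h_\l(\x')) - i(\Im z)$, the $\x_d$-integral in $K_{z,\pm}(x)$ can be computed (or estimated) essentially as a one-dimensional Cauchy-type integral, and the remaining $\x'$-integral is of the oscillatory form controlled by the surface-measure decay $(\ref{surmes})$ (in the rewritten form given in the Remark after Assumption~\ref{assa}).

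**First and second estimates.** For the Fourier-transform-in-$x'$ bound, I would fix $x_d$ and integrate out $\x_d$ first: writing $1/(T(\x)-z) = e(\x,\l)^{-1}/(\x_d - h_\l(\x') - i(\Im z)/e(\x,\l))$ (with the sign of the imaginary part fixed by $\pm$), contour-shifting / residue calculus in $\x_d$ over the compactly supported smooth amplitude produces, up to smooth error terms, a boundary term of the form $e(\x',h_\l(\x'),\l)^{-1}\chi(\x',h_\l(\x'))e^{2\pi i x_d h_\l(\x')}$ plus a term that is smooth and compactly supported in $\x'$ uniformly in $z$. Then $\int_{X^{d-1}}K_{z,\pm}(y',x_d)e^{-2\pi i y'\cdot\x'}dy'$ is, by Fourier inversion, just the value of this amplitude at the shifted frequency, which is bounded uniformly in $\x'$, $x_d$ and $z\in I_\pm$; this gives $C_0$. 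For the pointwise bound $|K_{z,\pm}(x)|\leq C_1(1+|x|)^{-k}$, the same $\x_d$-integration reduces $K_{z,\pm}(x)$ to an oscillatory integral $\int e^{2\pi i(x'\cdot\x'+x_d h_\l(\x'))}b_\l(\x')\,d\x'$ with $b_\l$ smooth, compactly supported, with all derivatives bounded uniformly in $\l\in I$ and $z$; the decay $(1+|x|)^{-k}$ then follows from the Remark after Assumption~\ref{assa} (which upgrades $(\ref{surmes})$ to an estimate with $b$ in place of $\chi$, uniformly in such $b$), together with elementary non-stationary-phase estimates in the $x_d$-small / $x'$-large regime and vice versa to combine $(1+|x'|)^{-k}$ and $(1+|x_d|)^{-k}$ into $(1+|x|)^{-k}$.

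**Difference estimates.** For $K_{z,w,\pm}$ the bound $2C_0$ on the $x'$-Fourier transform is immediate from linearity and the $C_0$ bound for each term. The quantitative Hölder-type bound $|K_{z,w,\pm}(x)|\leq C_1'|z-w|^\d(1+|x|)^{-k+\d}$ is the heart of the lemma. I would interpolate between two extremes: the case $\d=0$, which is just $|K_{z,w,\pm}(x)|\leq |K_{z,\pm}(x)|+|K_{w,\pm}(x)|\leq 2C_1(1+|x|)^{-k}$, and the case $\d=1$, where one writes $K_{z,w,\pm}(x) = \int_0^1 \frac{d}{dt}K_{z_t,\pm}(x)\,dt$ along the segment $z_t = w+t(z-w)$, so $\pa_t K_{z_t,\pm}(x) = (z-w)\int e^{2\pi ix\cdot\x}\chi(\x)(T(\x)-z_t)^{-2}\,d\x$. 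The new feature is the double pole $(T(\x)-z_t)^{-2}$; doing the $\x_d$-integration now produces, besides smooth remainders, a term $\propto x_d\, e^{2\pi i(x'\cdot\x'+x_d h_\l(\x'))}\tilde b(\x')$ (the factor $x_d$ coming from differentiating the exponential in the residue computation) plus a term with $\tilde b$ of the same type as before. Applying the surface-measure estimate gives $|\pa_t K_{z_t,\pm}(x)|\leq C|z-w|(1+|x|)^{-k+1}$ uniformly in $t$, hence $|K_{z,w,\pm}(x)|\leq C|z-w|(1+|x|)^{-k+1}$. Interpolating these two pointwise bounds (geometric mean: raise the $\d=0$ bound to power $1-\d$ and the $\d=1$ bound to power $\d$) yields exactly $C_1'|z-w|^\d(1+|x|)^{-k+\d}$.

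**Main obstacle.** The routine-but-delicate part is the $\x_d$-contour deformation: one must handle the pole at $\x_d = h_\l(\x') + i(\Im z)/e(\x,\l)$ uniformly as $\Im z\to 0^\pm$ (so that the limit defining $R_0^\pm(\l\pm i0)$ is taken correctly and the bounds do not blow up), and keep all the $\x'$-amplitudes thus produced smooth with derivative bounds uniform in $(\l,z)\in I_\pm$ — this uses the compactness of $I$, of $T^{-1}(I)$, smoothness of $h_\l,e$ in $(\x,\l)$ jointly, and the lower bound $\min e>0$ on $\supp\chi$. The genuinely non-trivial analytic input, however, is entirely outsourced: the decay of the oscillatory $\x'$-integrals is exactly Assumption~\ref{assa}/$(\ref{surmes})$ in its strengthened (amplitude-uniform) form, so once the reduction to such integrals is carried out cleanly — including the extra polynomial factor $x_d$ in the double-pole case, which is what costs one power of decay per derivative in $z$ and produces the exponent $k-\d$ — the rest is bookkeeping and the Riesz–Thorin-free geometric interpolation described above.
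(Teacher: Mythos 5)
Your proposal is essentially correct and for the first three estimates takes the same route as the paper: integrate out the $\x_d$-variable first using the factorization $(\ref{Timp})$ to produce an amplitude $\c_{z,\pm}(\x',x_d)$ whose $\x'$-derivatives are uniformly bounded (this is Lemma~\ref{appth} in the Appendix, proved via the Hilbert transform and the pole-splitting estimates of Lemma~\ref{leminte}), and then apply the surface-measure decay $(\ref{surmes})$, upgraded to general smooth amplitudes via the remark after Assumption~\ref{assa}, to the remaining $\x'$-oscillatory integral. Your added mention of separate non-stationary-phase estimates in the $x_d$-small/$x'$-large regimes is unnecessary — $(\ref{surmes})$ already gives $(1+|x|)^{-k}$ directly — but harmless.

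For the Hölder estimate the route is genuinely different. The paper proves $(\ref{games2})$, i.e.\ $|\pa_{\x'}^{\a}(\c_{z,\pm}-\c_{w,\pm})|\le C_{\a}'(1+|x_d|)|z-w|$, by a discrete telescope $\c_{z,\pm}-\c_{w,\pm}=J_1+J_2+J_3$ that replaces the parameters one at a time — the amplitude $\tilde\chi(\cdot,\l)\to\tilde\chi(\cdot,\s)$, the denominator coefficient $b(\cdot,\l)\to b(\cdot,\s)$, and $\Im z\to\Im w$ — with each $J_i$ handled by Lemma~\ref{leminte} and the factor $(1+|x_d|)$ entering through an $x_d$-derivative bound for $J_1$; the geometric interpolation with $(\ref{games1})$ is then stated as a remark, exactly as in your last step. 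You instead write $K_{z,w,\pm}$ as $\int_0^1 \pa_t K_{z_t,\pm}\,dt$ along the segment $z_t=w+t(z-w)$, obtaining a double pole $(T(\x)-z_t)^{-2}$ and gaining the factor $x_d$ by one integration by parts in $\x_d$. This is conceptually cleaner (it handles all parameter changes at once and does not need to name $J_1,J_2,J_3$), but it carries an extra obligation the paper sidesteps: when $z,w$ lie on the real axis, you must justify that $\l\mapsto K_{\l\pm i0,\pm}(x)$ is actually $C^1$ with derivative given by the boundary value of the double-pole integral, uniformly enough to apply the fundamental theorem of calculus. The paper's telescope works directly with the already-defined boundary amplitudes and avoids this. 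Also be aware that "residue calculus" on the $\x_d$-integral is only heuristic here (the amplitude $\chi$ is not holomorphic); the paper's Lemma~\ref{Hil}/\ref{leminte} make the Plemelj-type splitting precise and, in particular, keep track of the fact that $P'=\pa_{\x_d}\bigl(e(\x)(\x_d-h_\l(\x'))\bigr)$ must be bounded away from zero on $\supp\chi$ for the integration by parts against the double pole to produce only the claimed $(1+|x_d|)$ loss. With those two points filled in, your argument gives exactly the same $\d=0$/$\d=1$ endpoints and the same geometric interpolation as the paper.
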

\begin{proof}
Note that $\int_{X^{d-1}}K_{z,\pm}(y',x_d)e^{-2\pi iy'\cdot \x'}dy'=\int_{\widehat{X}}\frac{e^{2\pi ix_d\x_d}\chi(\x)}{T(\x)-z}d\x_d$. If necessary we take $\supp \chi$ is small, it suffices to replace the integration region by $\re$. Thus by $(\ref{Timp})$, we have
\begin{align*}
\int_{\widehat{X}}\frac{e^{2\pi ix_d\x_d}\chi(\x)}{T(\x)-z}d\x_d=&\int_{\re}\frac{e^{2\pi ix_d\x_d}\chi(\x)}{T(\x)-z}d\x_d\\
=&\int_{\re}\frac{e^{2\pi ix_d(\x_d+h_{\l}(\x'))}\chi(\x', \x_d+h_{\l}(\x'))}{e(\x', \x_d+h_{\l}(\x'), \x_d)\x_d-i \Im z}d\x_d\\
=&:e^{2\pi ix_dh_{\l}(\x')}\c_{z, \pm}(\x',x_d).
\end{align*}
By using \cite[(3.10)]{C2} for $\pm\Im z>0$ and \cite[(A.6)]{C3} for $\pm \Im z=0$, we have 
\begin{align}\label{games}
|\pa_{\x'}^{\a}\c_{z, \pm}(\x',x_d)|\leq C_{\a}
\end{align}
for $\a\in \mathbb{N}^{d-1}$. We will prove $(\ref{games})$ in Lemma \ref{appth}. Thus the first inequality holds. Moreover, we note that
\begin{align*}
K_{z,\pm}(x)=\int_{\widehat{X^{d-1}}}\c_{z, \pm}(\x')e^{2\pi i(x'\cdot \x'+ x_dh_{\l}(\x'))}d\x'.
\end{align*}
Since $\c_{z,\pm}$ is compactly supported in $\x'$-variable, then $(\ref{surmes})$ and $(\ref{games})$ imply the second inequality. The estimates for $K_{z,w,\pm}(x)$ follow from the estimates
\begin{align*}
|\pa_{\x'}^{\a}\c_{z, \pm}(\x',x_d)|\leq C_{\a}'|z-w|^{\d}(1+|x_d|)^{\d},
\end{align*}
which is also proved after Lemma \ref{appth}: $(\ref{games3})$.
\end{proof}

\begin{lem}\label{unilem2}
There exists $C_3>0$ such that
\begin{align*}
&|\int_{X^{d-1}}\int_{X^{d-1}}e^{2\pi iy'\cdot \x'}\bar{K}_{z,\pm}(x',x_d-y_d)K_{z,\pm}(x'-y', x_d-z_d)dx'dy'|\leq C_0^2\\
&|\int_{X^{d-1}}\bar{K}_{z,\pm}(x'-y',x_d-y_d)K_{z,\pm}(x'-z',x_d-z_d)dx'|\leq C_3^2(1+|y_d-z_d|)^{-k}
\end{align*}
where $C_0>0$ is as in the proof of Lemma \ref{unilem1}.
\end{lem}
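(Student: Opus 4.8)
The plan is to prove the two displayed inequalities, which are precisely the hypotheses of the Remark following Assumption~\ref{assc} applied to the convolution kernel $K(x,y)=K_{z,\pm}(x-y)$; hence they yield Assumption~\ref{assc} for $K_{z,\pm}$. I keep the notation of the proof of Lemma~\ref{unilem1}: for $a\in X$ and $\x'\in\widehat{X^{d-1}}$ one has
\begin{align*}
\int_{X^{d-1}}K_{z,\pm}(x',a)\,e^{-2\pi ix'\cdot\x'}\,dx'=e^{2\pi iah_\l(\x')}\c_{z,\pm}(\x',a),
\end{align*}
where, for each $a$, $\c_{z,\pm}(\cdot,a)$ is a function supported in a fixed compact set $U'\subset\widehat{X^{d-1}}$ (the projection of $\supp\chi$ to the first $d-1$ coordinates) and, by $(\ref{games})$, $|\pa_{\x'}^\a\c_{z,\pm}(\x',a)|\leq C_\a$ uniformly in $a\in X$ and $z\in I_\pm$; in particular $|\c_{z,\pm}(\x',a)|\leq C_0$. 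Moreover $K_{z,\pm}(\cdot,a)$ is a Schwartz function of $x'$ (resp.\ a rapidly decaying sequence when $X=\ze$), being the inverse Fourier transform of a compactly supported smooth function.

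For the first inequality I substitute $w'=x'-y'$ in the double integral; it converges absolutely and, the integrand being now a product, factors as
\begin{align*}
&\overline{\Bigl(\int_{X^{d-1}}e^{-2\pi ix'\cdot\x'}K_{z,\pm}(x',x_d-y_d)\,dx'\Bigr)}\cdot\Bigl(\int_{X^{d-1}}e^{-2\pi iw'\cdot\x'}K_{z,\pm}(w',x_d-z_d)\,dw'\Bigr)\\
&\qquad=e^{2\pi i(y_d-z_d)h_\l(\x')}\,\bar\c_{z,\pm}(\x',x_d-y_d)\,\c_{z,\pm}(\x',x_d-z_d),
\end{align*}
whose modulus is at most $C_0^2$. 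For the second inequality I apply Parseval in the $x'$ variable (resp.\ the Parseval identity for Fourier series on $\widehat{X^{d-1}}$) to rewrite
\begin{align*}
\int_{X^{d-1}}\bar K_{z,\pm}(x'-y',x_d-y_d)\,K_{z,\pm}(x'-z',x_d-z_d)\,dx'=\int_{\widehat{X^{d-1}}}e^{2\pi i\left((y'-z')\cdot\x'+(y_d-z_d)h_\l(\x')\right)}b(\x')\,d\x',
\end{align*}
with $b(\x')=\bar\c_{z,\pm}(\x',x_d-y_d)\,\c_{z,\pm}(\x',x_d-z_d)$, supported in $U'$, and, by the Leibniz rule and the bounds above, $\sum_{|\a|\leq N}\sup_{\x'}|\pa_{\x'}^\a b(\x')|\leq C_N'$ uniformly in $x_d,y_d,z_d,z$.

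It then suffices to use $(\ref{surmes})$ in the strengthened form: for every $b\in C_c^\infty(\widehat{X^{d-1}})$ supported in $U'$,
\begin{align*}
\Bigl|\int_{\widehat{X^{d-1}}}e^{2\pi i(x'\cdot\x'+x_dh_\l(\x'))}b(\x')\,d\x'\Bigr|\leq C\,(1+|x|)^{-k}\sum_{|\a|\leq N}\sup_{\x'}|\pa_{\x'}^\a b(\x')|,\qquad x=(x',x_d)\in X^d,\ \l\in I,
\end{align*}
with $C,N$ independent of $b$ and $\l$; applying this with $x=(y'-z',\,y_d-z_d)$ gives the bound $CC_N'(1+|y_d-z_d|)^{-k}$, i.e.\ the second inequality with $C_3^2=CC_N'$.

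The only substantial step is this strengthened form: the decay exponent $k$ granted by Assumption~\ref{assa} for a single cutoff must be propagated to an arbitrary smooth amplitude with a constant governed by finitely many $C^N$-seminorms; this upgrades the decay-free second part of the Remark after Assumption~\ref{assa}. To obtain it, fix once and for all $\psi_0\in C_c^\infty(U)$ equal to $1$ on a neighborhood of $\{(\x',h_\l(\x')):\x'\in U',\ \l\in I\}$ (possible since $\supp\chi$ was taken small), write the measure $b(\x')\,d\x'$ on the graph chart as $\rho\cdot(\psi_0\,d\m_\l)$ for some $\rho\in C_c^\infty(U)$ whose $C^N$-norms are controlled by those of $b$ (divide by the surface Jacobian and extend off $M_\l$), so that its Fourier transform equals $\widehat\rho*\widehat{\psi_0\,d\m_\l}$, and combine $(\ref{surmes})$ for $\psi_0$ with the elementary inequality $\int|\widehat\rho(y)|(1+|x-y|)^{-k}\,dy\lesssim (1+|x|)^{-k}$ (a sum over $y\in\ze^d$ when $X=\ze$), which holds because $\widehat\rho$ is rapidly decaying. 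I expect this convolution argument — together with the mildly fussy transcription of Parseval and of this decay lemma to Fourier series on $\T^{d-1}$ in the discrete case — to be the part requiring genuine care; everything else is bookkeeping already set up in Lemma~\ref{unilem1}.
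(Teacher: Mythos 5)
Your proof is correct and takes essentially the same route as the paper's: both reduce the two kernel bounds, via the substitution $w'=x'-y'$ in the first integral and Parseval in the second, to the factorizations
\begin{align*}
e^{2\pi i(y_d-z_d)h_\l(\x')}\overline{\c_{z,\pm}(\x',x_d-y_d)}\,\c_{z,\pm}(\x',x_d-z_d)
\quad\text{and}\quad
\int_{\widehat{X^{d-1}}}e^{2\pi i((y'-z')\cdot\x'+(y_d-z_d)h_\l(\x'))}\overline{\c_{z,\pm}(\x',x_d-y_d)}\,\c_{z,\pm}(\x',x_d-z_d)\,d\x',
\end{align*}
and then close with $(\ref{surmes})$ and $(\ref{games})$. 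You correctly flag that the second display of the Remark following Assumption~\ref{assa}, which is what the paper is implicitly invoking here, is stated without the factor $(1+|x|)^{-k}$; this looks like a typo, and your convolution argument is one way to supply the missing decay, though you could also argue more directly from Assumption~\ref{assa} itself, which already asserts $(\ref{surmes})$ for \emph{any} $\chi\in C_c^\infty$ supported in $U$, by extending $b(\x')$ to a cutoff $\tilde b(\x)$ on $\widehat{X^d}$ and tracking the dependence of the constant on finitely many $C^N$-seminorms of $\tilde b$.
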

\begin{proof}
Note that
\begin{align*}
&\int_{X^{d-1}}\int_{X^{d-1}}e^{2\pi iy'\cdot \x'}\bar{K}_{z,\pm}(x',x_d-y_d)K_{z,\pm}(x'-y', x_d-z_d)dx'dy'\\
&=e^{2\pi i(y_d-z_d)h_{\l}(\x')}\c_{z,\pm}(\x',x_d-z_d)\overline{\c_{z, \pm}(\x',x_d-y_d)},
\end{align*}
where $\c_{z,\pm}$ is as in the proof of Lemma \ref{unilem1}. Moreover, we have
\begin{align*}
&\int_{X^{d-1}}\bar{K}_{z,\pm}(x'-y',x_d-y_d)K_{z,\pm}(x'-z',x_d-z_d)dx'\\
&=\int_{\widehat{X}^{d-1}}e^{2\pi i(y'-z')\cdot\x'+2\pi i(y_d-z_d)h_{\l}(\x')}\c_{z,\pm}(\x',x_d-z_d)\overline{\c_{z, \pm}(\x',x_d-y_d)}d\x'.
\end{align*}
Thus $(\ref{surmes})$ and $(\ref{games})$ imply the conclusion.

\end{proof}

Lemma \ref{unilem1} and \ref{unilem2} imply that $K_{z,\pm}$ satisfies Assumptions \ref{assc} and \ref{assd} and $K_{z,w,\pm}$ satisfies Assumption \ref{assb}. This completes the proof of Theorem \ref{mainprop}.

\end{proof}

\begin{rem}
In order to prove $(i)$, it is sufficient to prove $(i)$ for $\pm \Im z=0$ by using the Phragm\'en-Lindel\"of principle as in \cite[Section 5.3]{R}. See also \cite[Appendix A]{C2} for the estimates of the Shatten norm of the resolvent. Here we avoid using the Phragm\'en-Lindel\"of principle.
\end{rem}

\begin{cor}\label{abBir}
Let $r_1,r_2\in (1,4k+2]$ satisfying $1/r_1+1/r_2\geq 1/(k+1)$. Then
\begin{align}\label{Bir1}
\sup_{z\in I_{\pm}}\|W_1\chi(D)R_0^{\pm}(z)W_2\|_{B(L^2(X^d))}\leq C\|W_1\|_{L^{r_1}(X^d)}\|W_2\|_{L^{r_2}(X^d)}
\end{align}
for $W_1\in L^{r_1}(X^d)$ and $W_2\in L^{r_2}(X^d)$. Moreover, let $W_1\in L^{r_1}(X^d)$ and $W_2\in L^{r_2}(X^d)$. Then it follows that $W_1\chi(D)R_0^{\pm}(z)W_2$ belongs to $B_{\infty}(L^2(X^d))$ and a map $z\in I_{\pm}\mapsto W_1\chi(D)R_0^{\pm}(z)W_2\in B_{\infty}(L^2(X^d))$ is continuous in $z\in I_{\pm}$.
In addition, for $r=r_1=r_2 \in (1,4k_{\d}+2)$, we have
\begin{align}\label{Bir2}
\|W_1\chi(D)(R_0^{\pm}(z)-R_0^{\pm}(w))W_2\|_{B(L^2(X^d))}\leq C|z-w|^{\b_{\d}}\|W_1\|_{L^{r}(X^d)}\|W_2\|_{L^{r}(X^d)}
\end{align}
for $z,w\in I_{\pm}, |z-w|\leq 1$.
\end{cor}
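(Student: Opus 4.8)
The plan is to deduce everything from Theorem \ref{mainprop} by a Hölder/interpolation argument, exactly as in the standard Birman–Schwinger reduction used for the continuous Laplacian. First I would prove the norm bound \eqref{Bir1}. Start by observing that since $1/r_i \le 1/2$, multiplication by $W_i$ is bounded from $L^{p_i'}$ to $L^2$ for $p_i' = 2r_i/(r_i-2)$ (or from $L^2$ into $L^{p_i}$ by duality), where $1/p_i = 1/2 - 1/r_i$. Thus $\|W_1 \chi(D)R_0^\pm(z) W_2 f\|_{L^2} \le \|W_1\|_{L^{r_1}} \|\chi(D)R_0^\pm(z)\|_{B(L^{p_2}, L^{q_1})} \|W_2\|_{L^{r_2}} \|f\|_{L^2}$ provided $\chi(D)R_0^\pm(z)$ maps $L^{p_2}\to L^{q_1}$ with $1/q_1 = 1/2 + 1/r_1$ and $1/p_2 = 1/2 - 1/r_2$. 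The constraints $r_i \in (1, 4k+2]$ and $1/r_1 + 1/r_2 \ge 1/(k+1)$ translate precisely into $(1/p_2, 1/q_1) \in S_k$ (one checks: $1/p_2 - 1/q_1 = 1/r_1 + 1/r_2 - 1 \ge \dots$ wait, rather $1/p_2 - 1/q_1 \ge 1/(k+1)$ needs care — actually $1/p_2-1/q_1 = (1/2-1/r_2)-(1/2+1/r_1) $ is negative, so one should instead pair $W_1$ with the output side and $W_2$ with the input side so that $1/p - 1/q = 1/r_1 + 1/r_2 \ge 1/(k+1)$, and the two strict inequalities $(1+k)/(1+2k) < 1/p$, $1/q < k/(1+2k)$ follow from $r_i > 1$ and $r_i \le 4k+2$). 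Then apply Theorem \ref{mainprop}(i), which gives the uniform-in-$z$ bound on $\|\chi(D)R_0^\pm(z)\|_{B(L^p,L^q)}$ over $z \in I_\pm$; this yields \eqref{Bir1}.

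For the Schatten/compactness claim, I would first handle the case $W_1, W_2 \in C_c^\infty(X^d)$ (or Schwartz-class, finitely supported in the discrete case). For such $W_i$ one can directly estimate a Schatten norm — e.g. $\|W_1 \chi(D)R_0^\pm(z) W_2\|_{B_s}$ for a suitable finite $s$ — using that the kernel $W_1(x)K_{z,\pm}(x-y)W_2(y)$ decays in $|x|$, $|y|$ and that $|K_{z,\pm}(x)| \le C_1(1+|x|)^{-k}$ from Lemma \ref{unilem1}; combined with the $L^2$ bound this puts the operator in $B_\infty(L^2)$. For general $W_i \in L^{r_i}$, approximate $W_i$ by compactly supported bounded functions in $L^{r_i}$ norm and use \eqref{Bir1} (with $W_1$ or $W_2$ replaced by the difference) to see that $W_1 \chi(D)R_0^\pm(z) W_2$ is a norm-limit of compact operators, hence compact, uniformly in $z$. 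Continuity of $z \mapsto W_1 \chi(D)R_0^\pm(z) W_2$ in $B_\infty(L^2)$ then follows from: (a) continuity in operator norm on the dense set of compactly supported $W_i$, which in turn follows from \eqref{Bir2} below (applied with $w \to z$) together with the uniform bound, plus (b) a standard $3\epsilon$-argument using \eqref{Bir1} to control the approximation error uniformly in $z$.

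For the Hölder estimate \eqref{Bir2}, repeat the argument of \eqref{Bir1} but now with Theorem \ref{mainprop}(ii): taking $r = r_1 = r_2 \in (1, 4k_\d + 2)$ forces $1/p - 1/p^* = 2/r$ and one checks $(1/p, 1/p^*) \in S_{k_\d}$ (the strict bound $r < 4k_\d + 2$ gives $1/p - 1/p^* < 1/(k_\d+1)$... again modulo the precise endpoint bookkeeping, the point is that the conditions on $r$ are exactly designed so that the pair lands in $S_{k_\d}$). Then $\|W_1 \chi(D)(R_0^\pm(z) - R_0^\pm(w)) W_2\|_{B(L^2)} \le \|W_1\|_{L^r}\|\chi(D)(R_0^\pm(z)-R_0^\pm(w))\|_{B(L^p, L^{p^*})}\|W_2\|_{L^r} \le C|z-w|^{\b_\d}\|W_1\|_{L^r}\|W_2\|_{L^r}$ by Theorem \ref{mainprop}(ii), since $\b_\d = (2/p - 1)\d$ with $2/p - 1 = 1/p - 1/p^*$. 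The main obstacle I anticipate is not any deep estimate — all the analytic content sits in Theorem \ref{mainprop} — but rather the bookkeeping: verifying carefully that the stated ranges $r_i \in (1, 4k+2]$ with $1/r_1 + 1/r_2 \ge 1/(k+1)$ (resp. $r \in (1, 4k_\d+2)$) correspond exactly to membership in $S_k$ (resp. $S_{k_\d}$) of the Hölder-conjugate exponent pair, and handling the endpoint $r_i = 4k+2$ in \eqref{Bir1} (which should be fine since $S_k$ includes its relevant boundary pieces for the first inequality) versus the strict inequality needed in \eqref{Bir2} (where $S_{k_\d}$'s open conditions are used). The Schatten-class membership also requires a small separate argument for compactly supported weights, but that is routine given the pointwise kernel bound from Lemma \ref{unilem1}.
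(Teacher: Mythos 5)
Your proposal is essentially the paper's proof: \eqref{Bir1} and \eqref{Bir2} follow from Theorem \ref{mainprop} together with H\"older's inequality, and the compactness and continuity of $z\mapsto W_1\chi(D)R_0^{\pm}(z)W_2$ are established first for $W_1,W_2\in C_c^{\infty}(X^d)$ (where the paper uses the slightly slicker observation that the $L^\infty$ bound on the kernel of $\chi(D)R_0^{\pm}$ from Lemma \ref{unilem1} together with the compact supports makes the kernel of $W_1\chi(D)R_0^{\pm}(z)W_2$ square-integrable, hence Hilbert--Schmidt, rather than invoking a general Schatten estimate) and then extended to $W_i\in L^{r_i}$ by the $\varepsilon/3$-argument using \eqref{Bir1}. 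One point of caution: you remark that the endpoint $r_i=4k+2$ ``should be fine since $S_k$ includes its relevant boundary pieces,'' but in fact the exponent pair produced by H\"older at $r_i=4k+2$ lands exactly on the \emph{strict} boundary $1/p=(1+k)/(1+2k)$ or $1/q=k/(1+2k)$ of $S_k$, so that endpoint is not literally covered by Theorem \ref{mainprop}(i) as written; this imprecision is present in the paper as well, and is presumably what the author's earlier remark about extending results to Lorentz spaces by real interpolation is meant to absorb.
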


\begin{proof}
$(\ref{Bir1})$ and $(\ref{Bir2})$ follow from Theorem \ref{mainprop} and the H\"older inequality. For proving the other statements, we may assume $W_1,W_2\in C_c^{\infty}(X^d)$ by $\e/3$-argument and $(\ref{Bir1})$. Since $W_1$ and $W_2$ are compactly supported and since the integral kernel of $\chi(D)R_0^{\pm}$ is in $L^{\infty}$ by Lemma \ref{unilem1}, then the integral kernel of $W_1\chi(D)R_0^{\pm}(z)W_2$ is square integrable and hence Hilbert-Schmidt. Thus it follows that $W_1\chi(D)R_0^{\pm}(z)W_2$ is compact. Moreover, by $(\ref{Bir2})$, we see that $W_1\chi(D)R_0^{\pm}(z)W_2$ is continuous in $z\in I_{\pm}$. The case of $W_1\in L^{r_1}(X^d)$ and $W_2\in L^{r_2}(X^d)$ follows from the $\e/3$-argument as in the proof of Lemma \ref{discear}.
\end{proof}

\subsection{Supersmoothing, Proof of Theorem \ref{mainprop} $(iii)$}
In this subsection, we assume $X=\re$. The author expect that the following proposition with $X=\ze$ holds. However, we prove this with with $X=\re$ for possibly technical reason.
We recall $\m_{N,\c}(x)=(1+|x|^2)^N(1+\c|x|^2)^{-N}$. We restate Theorem \ref{mainprop} $(iii)$:

\begin{prop}\label{superprop}
Let $I\subset \re$ be a compact interval. Suppose $T^{-1}(I)$ is compact.
Let $\chi\in C_c^{\infty}(\re^d)$ be supported in $T^{-1}(I)$. Under Assumption \ref{assa}, for $(1/p,1/q)\in S_k$, there exists $C_{N,p,q}>0$ such that
\begin{align}\label{super1}
\|\m_{N,\c}(x) \chi(D) u\|_{L^{q}(\re^d)\cap \mathcal{B}^*}\leq C_{N,p,q}\|\m_{N,\c}(x) (T(D)-\l)\chi(D) u\|_{L^{p}(\re^d)+\mathcal{B}}
\end{align}
for $u\in \mathcal{S}(\re^d)$.
\end{prop}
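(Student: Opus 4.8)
The plan is to deduce the weighted bound from the $L^p\to L^q$ (and $\mathcal B,\mathcal B^*$) mapping property of the microlocalized resolvent already established, by an induction on $N$ driven by commutator identities; the parameter $\c$ is kept only to ensure that every quantity below is finite for $u\in\mathcal S(\re^d)$ while producing constants independent of $\c\in(0,1]$. As at the beginning of the proof of Theorem~\ref{mainprop}$(i)$, a partition of unity and a linear change of coordinates (both harmless for $\m_{N,\c}$ by its slow variation, see (b) below) reduce matters to $\pa_{\x_d}T\neq0$ on $\supp\chi$, so that $(\ref{Timp})$ holds and $M_\l$ is a graph; fix $\tilde\chi\in C_c^{\infty}(\re^d)$ supported in $U$ with $\tilde\chi\equiv1$ on a neighbourhood of $\supp\chi$, and put $v\coloneqq\chi(D)u$, $g\coloneqq(T(D)-\l)\chi(D)u$. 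Then $v,g\in\mathcal S(\re^d)$, the Fourier supports of $v,g$ lie in $\supp\chi$, and, since $(T(\x)-(\l\pm i0))^{-1}(T(\x)-\l)=1$ as distributions, $Gg=v$ where $G\coloneqq\tilde\chi(D)R_0^{\pm}(\l\pm i0)$ and $G(T(D)-\l)=\tilde\chi(D)$. By Lemmas~\ref{unilem1} and \ref{unilem2} (applied with $\tilde\chi$ in place of $\chi$), the kernel of $G$ satisfies Assumptions~\ref{assb}, \ref{assc}, \ref{assd}, so Theorem~\ref{mainprop}$(i)$ together with Propositions~\ref{abpr1}, \ref{abpr2} and \ref{away} gives
\begin{align}\label{Gmap}
\|Gh\|_{L^{q}(\re^d)\cap\mathcal B^*}\leq C\|h\|_{L^{p}(\re^d)+\mathcal B},\qquad (1/p,1/q)\in S_k.
\end{align}
Since $\m_{0,\c}\equiv1$, $(\ref{Gmap})$ with $h=g$ is $(\ref{super1})$ for $N=0$; and since $\m_{N,\c}\geq1$ it also yields the crude bound $\|v\|_{L^{q}\cap\mathcal B^*}\leq C\|\m_{N,\c}g\|_{L^{p}+\mathcal B}$, used below.

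\textbf{The induction step.} Suppose $(\ref{super1})$ holds with $N-1$ in place of $N$, for every admissible pair $(\chi,u)$. From $Gg=v$, $g=(T(D)-\l)v$ and $G(T(D)-\l)=\tilde\chi(D)$ one computes $[\m_{N,\c},G](T(D)-\l)=[\m_{N,\c},\tilde\chi(D)]-G[\m_{N,\c},T(D)]$, hence
\begin{align}\label{keyid}
\m_{N,\c}v=G(\m_{N,\c}g)+[\m_{N,\c},\tilde\chi(D)]v-G\bigl([\m_{N,\c},T(D)]v\bigr).
\end{align}
The first term is controlled by $(\ref{Gmap})$: $\|G(\m_{N,\c}g)\|_{L^{q}\cap\mathcal B^*}\leq C\|\m_{N,\c}g\|_{L^{p}+\mathcal B}$, the right-hand side of $(\ref{super1})$. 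For the other two we record the properties of $\m_{N,\c}$ to be used, all uniform in $\c\in(0,1]$: (a) $|\pa_x^{\a}\m_{N,\c}(x)|\leq C_{N,\a}(1+|x|)^{-|\a|}\m_{N,\c}(x)$; (b) $\m_{N,\c}(x)\leq C_N(1+|x-y|)^{M}\m_{N,\c}(y)$ for some $M=M(N)$; (c) $\m_{N-1,\c}\leq\m_{N,\c}$ and $(1+|x|)^{-1}\m_{N,\c}(x)\leq C_N\,\m_{N-1,\c}(x)^{1/2}\m_{N,\c}(x)^{1/2}$ (these follow by writing $\m_{N,\c}=w_\c^{N}$ with $w_\c(x)=(1+|x|^2)/(1+\c|x|^2)\geq1$); and (d) convolution by a fixed Schwartz function is bounded on $L^q$, on $\mathcal B$, on $\mathcal B^*$, and from $\mathcal B$ into $L^q$.

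\textbf{The two commutator terms.} Since $\tilde\chi(D)v=v$, we have $[\m_{N,\c},\tilde\chi(D)]v=(1-\tilde\chi(D))(\m_{N,\c}v)$; inserting an auxiliary cutoff $\tilde\chi_1$ with $\supp\chi\subset\{\tilde\chi_1=1\}\subset\supp\tilde\chi_1\subset\{\tilde\chi=1\}$ (so $\tilde\chi_1(D)v=v$ and $\check{\tilde\chi}*\check{\tilde\chi_1}=\check{\tilde\chi_1}$), its effective kernel is $\int\check{\tilde\chi}(x-z)\bigl(\m_{N,\c}(x)-\m_{N,\c}(z)\bigr)\check{\tilde\chi_1}(z-y)\,dz$, which by the mean value theorem and (a), (b) is dominated uniformly in $\c$ by $(1+|x|)^{-1}\m_{N,\c}(x)\,\Phi(x-y)$ with $\Phi\in\mathcal S(\re^d)$; together with (b), (c), (d) this gives $\|[\m_{N,\c},\tilde\chi(D)]v\|_{L^{q}\cap\mathcal B^*}\leq C\,\|\m_{N-1,\c}v\|_{L^{q}\cap\mathcal B^*}^{1/2}\|\m_{N,\c}v\|_{L^{q}\cap\mathcal B^*}^{1/2}$. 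For the last term of $(\ref{keyid})$ one writes $[\m_{N,\c},T(D)]\tilde\chi(D)=[\m_{N,\c},(T\tilde\chi)(D)]-T(D)\bigl((1-\tilde\chi(D))\m_{N,\c}\tilde\chi(D)\bigr)$: the symbol $T\tilde\chi$ is in $C_c^{\infty}$, so $(T\tilde\chi)(D)$ has a Schwartz kernel and the first summand is treated exactly as above; the second summand is, by the preceding computation and the reduction $(\ref{Timp})$ (which makes $T(D)-\l$ a first-order operator in $x_d$ on the relevant frequency region), again an operator with Schwartz-type kernel times $(1+|x|)^{-1}\m_{N,\c}$, localized in frequency near $\supp\chi$. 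One thus obtains $\|[\m_{N,\c},T(D)]v\|_{L^{p}+\mathcal B}\leq C\,\|\m_{N-1,\c}v\|_{L^{q}\cap\mathcal B^*}^{1/2}\|\m_{N,\c}v\|_{L^{q}\cap\mathcal B^*}^{1/2}$, hence by $(\ref{Gmap})$ the same bound for $\|G([\m_{N,\c},T(D)]v)\|_{L^{q}\cap\mathcal B^*}$. Plugging these into $(\ref{keyid})$, using the induction hypothesis $\|\m_{N-1,\c}v\|_{L^{q}\cap\mathcal B^*}\leq C\|\m_{N-1,\c}g\|_{L^{p}+\mathcal B}\leq C\|\m_{N,\c}g\|_{L^{p}+\mathcal B}$ and then $ab\leq\e a^2+(4\e)^{-1}b^2$ to absorb a small multiple of $\|\m_{N,\c}v\|_{L^{q}\cap\mathcal B^*}$ into the left side — legitimate because that norm is finite for $u\in\mathcal S(\re^d)$ — we arrive at $(\ref{super1})$ for $N$.

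\textbf{Main obstacle.} The crux, where essentially all the work sits, is the mapping estimate for $[\m_{N,\c},T(D)]$: since $\m_{N,\c}$ is not a polynomial this commutator does not preserve frequency supports, and one must show that, modulo a Schwartz-kernel remainder, it factors as $(1+|x|)^{-1}\m_{N,\c}$ times a bounded operator still frequency-localized near $\supp\chi$, keeping the gain of one power of $(1+|x|)^{-1}$ that closes the induction and — crucially — with control in the $L^{p}+\mathcal B$ norm rather than merely on $L^{2}$, all uniformly in $\c\in(0,1]$. This is exactly where the graph reduction $(\ref{Timp})$ is exploited, replacing $T(D)-\l$ by a first-order operator in the normal variable $x_d$ so that each commutator lowers the $x_d$-weight by one power; the remaining verifications (the Schwartz decay of the remainder kernels, the weighted Young-type estimates, and properties (a)--(d)) are routine.
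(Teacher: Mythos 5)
Your proposal is genuinely different from the paper's proof, and it contains a gap that I don't think can be fixed within the strategy you outline. The paper does not run an induction on $N$ via commutator identities. Instead, after the same microlocalization, it exploits two structural facts that your argument does not use: first, it replaces the $d$-dimensional weight by a sum of one-dimensional weights, $\m_{N,\c}(x)\approx\sum_{j}\tilde\m_{N,\c}(x\cdot\x_j^+)$ over a basis $\{\x_j^+\}$ chosen so that $\x_j^+\cdot\nabla T\neq0$ on $\supp\chi$; second, after rotating so the weight is $\tilde\m_{N,\c}(x_d)$, it writes the solution of $(D_{x_d}-h_\l(\x'))\tilde u=\tilde g$ by variation of parameters with the causal choice of antiderivative (integrate over $y_d\leq x_d$ when $x_d\leq0$ and over $y_d\geq x_d$ when $x_d\geq0$), so that on the integration region $|y_d|\geq|x_d|$ and hence $\tilde\m_{N,\c}(x_d)/\tilde\m_{N,\c}(y_d)\leq1$. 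The weighted kernel $K_{N,\c}$ is then \emph{pointwise} dominated by the unweighted one and the conclusion follows by feeding $K_{N,\c}$ through Propositions~\ref{abpr2} and~\ref{away}, with no iteration at all. This monotonicity of $\tilde\m_{N,\c}$ along the causal direction is precisely the Agmon--H\"ormander mechanism; it is the structural input that your argument replaces by generic slow-variation estimates (a)--(d), and that replacement is where things break.

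Concretely, the step that fails is the mapping estimate for the second commutator term,
\begin{align*}
\|[\m_{N,\c},T(D)]v\|_{L^{p}+\mathcal B}\leq C\,\|\m_{N-1,\c}v\|_{L^{q}\cap\mathcal B^*}^{1/2}\|\m_{N,\c}v\|_{L^{q}\cap\mathcal B^*}^{1/2}.
\end{align*}
Even granting the optimal kernel bound $|K(x,y)|\leq C(1+|x|)^{-1}\m_{N,\c}(x)\,\Phi(x-y)$ with $\Phi\in\mathcal S(\re^d)$ (which is the most one can extract from (a)--(b)), the resulting operator must map $L^q\cap\mathcal B^*$ into $L^p+\mathcal B$ (after factoring the weight back onto $v$), and a gain of exactly one power of $(1+|x|)^{-1}$ together with a Schwartz convolution does not achieve this. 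On the $\mathcal B$-side: if $w\in\mathcal B^*$ has norm $1$ then $\|(1+|x|)^{-1}w\|_{L^2(2^{j-1}\leq|x|<2^j)}\lesssim 2^{-j/2}$, so $\|(1+|x|)^{-1}w\|_{\mathcal B}\lesssim\sum_j 2^{j/2}\cdot2^{-j/2}$, which diverges (take $w(x)=(1+|x|)^{(1-d)/2}$ for an explicit witness); convolution by $\Phi$ does not repair this because it is bounded on $\mathcal B^*$ but not from $\mathcal B^*$ to $\mathcal B$ (your (d) correctly omits that direction). On the $L^p$-side the same obstruction appears: since $(1/p,1/q)\in S_k$ forces $1/p-1/q\geq1/(k+1)$, one has $d(1/p-1/q)\geq d/(k+1)>1$ in all cases of interest (e.g.\ $k=(d-1)/2$), so the Young-type bound needed to push $L^q$ into $L^p$ with only a $(1+|x|)^{-1}$ weight gain is not available. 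Thus the factor $(1+|x|)^{-1}$ produced by one commutation is exactly borderline — it saturates, but does not beat, the $\mathcal B\to\mathcal B^*$ (equivalently $L^p\to L^q$) gain of the free resolvent — and the induction cannot close. By contrast, the paper's pointwise bound $\tilde\m_{N,\c}(x_d)/\tilde\m_{N,\c}(y_d)\leq1$ on the causal region costs nothing, and so no gain has to be manufactured. Note also that your estimate for $[\m_{N,\c},\tilde\chi(D)]v$ is fine, because there both sides live in $L^q\cap\mathcal B^*$; the problem is isolated in the $[\m_{N,\c},T(D)]$ term, where the target must be the dual-type space $L^p+\mathcal B$ in order to re-apply $G$.

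A secondary issue worth flagging: your formula $[\m_{N,\c},T(D)]\tilde\chi(D)=[\m_{N,\c},(T\tilde\chi)(D)]-T(D)\bigl((1-\tilde\chi(D))\m_{N,\c}\tilde\chi(D)\bigr)$ requires care when $T$ grows at infinity (e.g.\ $T=(-\Delta)^{s/2}$): the composition $(T(1-\tilde\chi))(D)\,\m_{N,\c}\,\tilde\chi_1(D)$ is smoothing because the symbols have disjoint supports, but making that precise with bounds uniform in $\c\in(0,1]$ needs the full asymptotic expansion in the pseudodifferential calculus for the non-compactly-supported symbol $T(1-\tilde\chi)$, which is more than "routine." The paper avoids this entirely: $e(\x,\l)^{-1}$ and $h_\l$ are only ever used on $\supp\chi$ where everything is compactly supported.
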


\begin{lem}\label{patlem}
Suppose that $m\in C^{\infty}(\re^d)$ satisfies
\begin{align*}
|\pa_{\x}^{\a}m(\x)|\leq C_{\a}(1+|\x|^2)^{-|\a|/2}
\end{align*}
for $\a\in \mathbb{N}^d$.
Let $1<p<\infty$. We set $\tilde{\m}_{N,\c}(x_d)=(1+|x_d|^2)^{N}(1+\c|x|^2)^{-N}$. Then we have
\begin{align*}
&\|\m(x)m(D)\m(x)^{-1}\|_{B(L^{p}(\re^d))}\leq C_{N,m,p},\,\, \|\m(x)m(D)\m(x)^{-1}\|_{B(\mathcal{B}(\re^d))}\leq C_{N,m},\\
&\|\m(x)m(D)\m(x)^{-1}\|_{B(\mathcal{B}^*(\re^d))}\leq C_{N,m}
\end{align*}
if $\m(x)\in \{\m_{N,\c}(x), \m_{N,\c}^{-1}(x), \tilde{\m}_{N,\c}(x_d), \tilde{\m}_{N,\c}^{-1}(x_d)\}$, where $C_{N,m,p}$ and $C_{N,m}$ are independent of $0<\c\leq 1$ and depends only on $d$, $N$ and finite number of $C_M$.
\end{lem}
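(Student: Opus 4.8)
The plan is to reduce everything to the $L^p$-boundedness of Fourier multipliers of Mikhlin–Hörmander type, which is classical, together with a commutator expansion that trades weights for extra decay in the symbol. First I would observe that both $\m_{N,\c}$ and $\tilde\m_{N,\c}$ are, for each fixed $0<\c\le 1$, smooth bounded functions, but with constants that blow up as $\c\to 0$; the whole point is to get bounds \emph{uniform} in $\c$, so I cannot simply treat $m(D)$ and the multiplication operators separately. Instead I would conjugate: write $\m(x)m(D)\m(x)^{-1}=m(D)+[\m(x),m(D)]\m(x)^{-1}$ and iterate, or more cleanly, I would realize $\m(x)m(D)\m(x)^{-1}$ directly as a pseudodifferential-type operator and compute its symbol via the oscillatory integral $(2\pi)^{-d}\iint e^{2\pi i(x-y)\cdot\x}\,\frac{\m(x)}{\m(y)}\,m(\x)\,dy\,d\x$. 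The key elementary fact is that $\m_{N,\c}(x)/\m_{N,\c}(y)$ and its analogues are controlled, with $\c$-independent constants, by $(1+|x-y|)^{C}$ together with \emph{symbol-type} bounds in the relevant variables: for instance $|\pa_x^\a(\m_{N,\c}(x)/\m_{N,\c}(y))|\le C_\a (1+|x-y|)^{2N}$ uniformly in $\c$, because $(1+|x|^2)^N/(1+|y|^2)^N\le (1+|x-y|^2)^N$ and the $(1+\c|x|^2)^{-N}$ factors only improve matters.

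The core step is then a symbol calculus / non-stationary phase argument. Integrating by parts in $y$ against $e^{2\pi i(x-y)\cdot\x}$ converts the $(1+|x-y|)$-growth of the weight quotient into negative powers of $\jap{\x}$ paired with $\x$-derivatives of $m$; since by hypothesis $|\pa_\x^\a m(\x)|\le C_\a\jap{\x}^{-|\a|}$, each derivative that lands on $m$ is exactly compensated. Carrying this out, I would show that $\m(x)m(D)\m(x)^{-1}$ has an amplitude (or, after a standard reduction, a left symbol) $a(x,\x)$ satisfying $|\pa_x^\b\pa_\x^\a a(x,\x)|\le C_{\a,\b}\jap{\x}^{-|\a|}$ with constants independent of $\c$. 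This is precisely a Mikhlin–Hörmander / forbidden-symbol class $S^0_{1,0}$ amplitude (with the extra bonus of $x$-decay irrelevant here), and hence the operator is bounded on $L^p(\re^d)$ for $1<p<\infty$ by the Calderón–Zygmund theory of pseudodifferential operators; for the endpoint-free statement one does not even need the sharp Hörmander multiplier theorem, the smooth Mikhlin condition suffices.

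For the Besov spaces $\mathcal B$ and $\mathcal B^*$, I would use a Littlewood–Paley-type dyadic decomposition in the \emph{spatial} variable built into the definition of $\|\cdot\|_{\mathcal B}$ and $\|\cdot\|_{\mathcal B^*}$, together with the fact that an operator with a kernel $A(x,y)$ obeying $|A(x,y)|\le C_M(1+|x-y|)^{-M}$ for all $M$ — which our pseudodifferential operator does, by repeated integration by parts in $\x$, again with $\c$-uniform constants — maps each dyadic annulus essentially into the comparable one with rapidly decaying off-diagonal tails. Then the weighted-$\ell^2$ sum defining $\mathcal B$ is preserved because the kernel decay beats the dyadic weight $2^{j/2}$, and the $\mathcal B^*$ bound follows by the same estimate or by duality since $\mathcal B^*$ is (up to the usual subtleties) the dual of $\mathcal B$ and the adjoint operator $\m(x)^{-1}m(D)^*\m(x)$ is of the same form with $\bar m$ in place of $m$, which still satisfies the Mikhlin bounds. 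One must only check the four choices $\m\in\{\m_{N,\c},\m_{N,\c}^{-1},\tilde\m_{N,\c},\tilde\m_{N,\c}^{-1}\}$ individually; for $\tilde\m_{N,\c}(x_d)=(1+|x_d|^2)^N(1+\c|x|^2)^{-N}$ the numerator depends only on $x_d$ so the commutator argument localizes in the $x_d$ direction, while the denominator is the same $\c$-dependent smoothing factor as before and contributes only favorable bounds.

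The main obstacle, and the only genuinely delicate point, is keeping every constant \emph{uniform in $\c\in(0,1]$}. The denominators $(1+\c|x|^2)^{-N}$ are globally bounded by $1$ but their $x$-derivatives involve powers of $\c|x|$, which are not small; the resolution is that in every quotient $\m(x)/\m(y)$ the two $(1+\c|\cdot|^2)^{-N}$ factors partially cancel, and one checks that $(1+\c|x|^2)^{-N}/(1+\c|y|^2)^{-N}=\big((1+\c|y|^2)/(1+\c|x|^2)\big)^N\le (1+\c|x-y|^2)^N\le (1+|x-y|^2)^N$, so the $\c$-dependence is absorbed into pure $|x-y|$-growth exactly as for the $(1+|x|^2)^N$ factor. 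I would isolate this as a short preliminary lemma on the symbol quotients before feeding the result into the oscillatory-integral estimates, so that the pseudodifferential bounds then go through verbatim with $\c$-independent constants; the rest is routine Calderón–Zygmund theory and dyadic bookkeeping.
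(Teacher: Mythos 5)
Your route is sound in outline but genuinely different from the paper's for the $L^p$ part. The paper (following Ionescu--Schlag, as it cites) decomposes dyadically in the \emph{spatial} variable: with $D_j=\{2^{j-1}\le|x|<2^j\}$ it suffices to show $\|\chi_{D_j}\,\m\, m(D)\,\m^{-1}\chi_{D_{j'}}\|_{B(L^p)}\lesssim 2^{-|j-j'|}$, which follows from the Mikhlin--H\"ormander theorem for adjacent blocks $|j-j'|\le 1$ (there $\sup_{D_j}\m\cdot\sup_{D_{j'}}\m^{-1}$ is uniformly bounded, thanks to the $\c$-uniform quotient estimate of Lemma \ref{mulemma}~(ii)) and from the off-diagonal kernel decay $|\check m(x-y)|\le C_M|x-y|^{-M}$ for $|j-j'|\ge 2$; a Schur-type summation in $(j,j')$ then closes the argument. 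You instead treat the conjugated operator as a pseudodifferential operator with amplitude $\m(x)\m(y)^{-1}m(\x)$ and aim to reduce it to a left symbol of class $S^0_{1,0}$ with $\c$-uniform constants, then invoke Calder\'on--Zygmund theory. Both arguments run on the same three ingredients --- the $\c$-uniform bound $\m(x)/\m(y)\lesssim(1+|x-y|)^{2N}$, the off-diagonal decay of $\check m$, and Mikhlin --- but the paper's dyadic version is more elementary and uses Mikhlin only as a black box, while yours localizes the $\c$-uniformity entirely in a symbol estimate.

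A few points in the sketch need tightening before it is a proof. The integration by parts that trades $|x-y|$-growth for $\jap{\x}$-decay should be performed in the $\x$-variable, using $(y-x)^{\a}e^{2\pi i(x-y)\cdot\x}\propto\partial_{\x}^{\a}e^{2\pi i(x-y)\cdot\x}$, not in $y$ as written. Moreover, $\m(x)\m(y)^{-1}m(\x)$ is not a standard amplitude because of its $(1+|x-y|)^{2N}$ growth, so the reduction to a left symbol requires either a finite Taylor expansion of $\m(y)^{-1}$ in $y$ about $x$ (the coefficients $b'_{\a}(x)$ from Lemma \ref{mulemma} being uniformly bounded) with the remainder handled by a near/far cut-off, or a direct split of the kernel by a cut-off $\rho(x-y)$ with the two pieces treated separately. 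Finally, the Schwartz kernel of a genuine $S^0_{1,0}$ operator is \emph{not} $O((1+|x-y|)^{-M})$ for all $M$: it carries the usual Calder\'on--Zygmund singularity on the diagonal, so the $\mathcal{B}$ and $\mathcal{B}^*$ bounds cannot come from kernel decay alone. One must combine $L^2$-boundedness for nearby dyadic annuli with off-diagonal decay for far ones --- which at that point reproduces the paper's dyadic estimate essentially verbatim.
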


\begin{proof}
The proof is same as in the proof of \cite[(3.7)]{IS}. In fact, though the range of $p$ is restricted in \cite{IS}, the proof succeeds even when $1<p<\infty$.
\end{proof}

\begin{lem}\label{mulemma}
\item[$(i)$]
For $\a\in \mathbb{N}^d$, we have
\begin{align}
\pa_{x}^{\a}\m_{N,\c}(x)=&b_{\a}(x)\m_{N,\c}(x) \label{mulem1}\\
\pa_{x}^{\a}\m_{N,\c}(x)^{-1}=&b_{\a}'(x)\m_{N,\c}(x)^{-1}\label{mulem2}
\end{align}
for some functions $b_{\a}, b_{\a}'\in C^{\infty}(\re^d)$ such that for $\b\in \mathbb{N}^d$, 
\begin{align*}
|(1+|x|^2)^{(|\a|+|\b|)/2}\pa_{x}^{\b}b_{\a}(x)|\leq C_{\a,\b,N},\,\, |(1+|x|^2)^{(|\a|+|\b|)/2}\pa_{x}^{\b}b_{\a}'(x)|\leq C_{\a,\b,N}
\end{align*}
with some constant $C_{\a,\b,N}$ which is independent of $0<\c\leq 1$.
\item[$(ii)$] There exists $C_N>0$ independent of $0<\c\leq 1$ such that
\begin{align*}
\m_{N,\c}(x)\m_{N,\c}(y)^{-1}+\m_{N,\c}(y)\m_{N,\c}(x)^{-1}\leq C_N(1+|x-y|^2)^{N},\,\, x,y\in \re^d.
\end{align*}

\end{lem}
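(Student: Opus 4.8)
The plan for part $(i)$ is to pass to the logarithm and exploit rescaling. Write $\mu_{N,\c}(x)=e^{\ell(x)}$ with $\ell(x)=N\log(1+|x|^2)-N\log(1+\c|x|^2)$. The first step is the uniform symbol estimate
\begin{align*}
|\pa_x^{\b}\log(1+\c|x|^2)|\leq C_{\b}(1+|x|^2)^{-|\b|/2},\quad |\b|\geq 1,
\end{align*}
with $C_\b$ independent of $\c\in(0,1]$: setting $y=\sqrt{\c}\,x$ one has $\log(1+\c|x|^2)=\log(1+|y|^2)$, hence $\pa_x^{\b}\log(1+\c|x|^2)=\c^{|\b|/2}(\pa^{\b}\log(1+|\cdot|^2))(y)$, and the estimate follows from the classical bound $|\pa^{\b}\log(1+|y|^2)|\leq C_{\b}(1+|y|^2)^{-|\b|/2}$ together with the elementary inequality $\c/(1+\c|x|^2)\leq 1/(1+|x|^2)$, valid since $\c\leq1$. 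Thus $|\pa_x^{\b}\ell(x)|\leq C_{\b,N}(1+|x|^2)^{-|\b|/2}$ for $|\b|\geq1$. Then the Fa\`a di Bruno formula (or an induction on $|\a|$ using $b_{\a+e_j}=\pa_{x_j}b_{\a}+b_{\a}\pa_{x_j}\ell$) gives $\pa_x^{\a}\mu_{N,\c}=b_{\a}\mu_{N,\c}$ with $b_{\a}$ a universal polynomial in the derivatives $\pa^{\b}\ell$, $1\leq|\b|\leq|\a|$, each monomial a product of such derivatives of total order $|\a|$; differentiating $b_\a$ further increases the total order by $|\b|$, so $|\pa_x^{\b}b_{\a}(x)|\leq C_{\a,\b,N}(1+|x|^2)^{-(|\a|+|\b|)/2}$, which is $(\ref{mulem1})$. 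Since $\mu_{N,\c}^{-1}=e^{-\ell}$ and $-\ell$ obeys the same bounds, the identical argument yields $(\ref{mulem2})$.

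For part $(ii)$ I would reduce to a one-dimensional inequality. Since $\mu_{N,\c}$ is radial, write $\mu_{N,\c}(x)=\f_{\c}(|x|)^{N}$ with $\f_{\c}(t)=(1+t^2)/(1+\c t^2)$; a direct computation gives $\f_{\c}'(t)=2t(1-\c)/(1+\c t^2)^2\geq0$, so $\f_{\c}$ is nondecreasing on $[0,\infty)$. The key elementary claim is
\begin{align*}
\f_{\c}(a)\leq 4\,\f_{\c}(b)\,(1+(a-b)^2),\quad a,b\geq 0,\ \c\in(0,1],
\end{align*}
which I would prove by three cases. If $a\leq b$ it follows from monotonicity, $\f_\c(a)\leq\f_\c(b)$. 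If $b<a\leq2b$, then $1+a^2\leq4(1+b^2)$ and $1+\c a^2\geq1+\c b^2$, so $\f_{\c}(a)\leq 4(1+b^2)/(1+\c b^2)=4\f_{\c}(b)$. If $a>2b$, then $a-b>a/2$ gives $4(a-b)^2>a^2$, hence $\f_{\c}(a)\leq1+a^2\leq4(1+(a-b)^2)\leq4\f_{\c}(b)(1+(a-b)^2)$ since $\f_{\c}(b)\geq1$. Raising to the $N$-th power and using $(|x|-|y|)^2\leq|x-y|^2$ gives $\mu_{N,\c}(x)\mu_{N,\c}(y)^{-1}\leq 4^{N}(1+|x-y|^2)^{N}$; adding the same bound with $x,y$ interchanged gives the assertion with $C_N=2\cdot4^{N}$.

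The only steps that need care are the uniformity in $\c\in(0,1]$ — delivered in both parts by the rescaling $x\mapsto\sqrt{\c}\,x$ and the inequality $\c(1+|x|^2)\leq1+\c|x|^2$ — and, in $(ii)$, getting the \emph{sharp} exponent $N$: applying Peetre's inequality separately to the two factors of $\mu_{N,\c}(x)/\mu_{N,\c}(y)=\big((1+|x|^2)/(1+|y|^2)\cdot(1+\c|y|^2)/(1+\c|x|^2)\big)^{N}$ would only produce the exponent $2N$, so the monotonicity of $\f_{\c}$ is genuinely used. Everything else is a routine expansion.
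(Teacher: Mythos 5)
Your proof is correct, and for part $(i)$ it is the same inductive argument as the paper's: the paper also defines $b_{\a+e_j}=\pa_{x_j}b_{\a}+b_{\a}b_{e_j}$ and runs an induction on $|\a|$. The paper does not spell out why the base derivative $b_{e_j}=\pa_{x_j}\log\m_{N,\c}$ has the required decay uniformly in $\c$, and your rescaling $y=\sqrt{\c}\,x$ together with $\c/(1+\c|x|^2)\leq 1/(1+|x|^2)$ is exactly the missing verification. For part $(ii)$ the paper merely asserts it is ``easily proved,'' so you are supplying the argument; your monotonicity-based three-case proof is correct. One small simplification worth noting: for any $x,y$ one of the two factors in $\m_{N,\c}(x)/\m_{N,\c}(y)=\bigl((1+|x|^2)/(1+|y|^2)\bigr)^N\bigl((1+\c|y|^2)/(1+\c|x|^2)\bigr)^N$ is automatically $\leq 1$ (according to whether $|x|\leq|y|$ or $|y|\leq|x|$), so a single application of Peetre's inequality to the remaining factor already gives the exponent $N$ with constant $2\cdot 2^N$; this bypasses the case analysis, though yours is equally valid.
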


\begin{proof}
$(i)$
We prove $(\ref{mulem1})$ only. The proof of $(\ref{mulem2})$ is similar.
We prove $(\ref{mulem1})$ by induction in $|\a|$. If $\a=0$, then $(\ref{mulem1})$ is trivial. Let $M>0$ be an integer. Suppose that $(\ref{mulem1})$ holds for $|\a|\leq M$. If $|\a|=M$, by the induction hypothesis, we have
\begin{align*}
\pa_{x_j}\pa_{x}^{\a}\m_{N,\a}(x)=&(\pa_{x_j}b_{\a}(x))\m_{N,\c}(x)+b_{\a}(x)\pa_{x_j}\m_{N,c}(x)\\
=&((\pa_{x_j}b_{\a})(x)+b_{\a}(x)b_{e_j}(x))\m_{N,\c}(x),
\end{align*}
where $(e_1,...,e_d)$ is a standard basis in $\re^d$. Thus, if we set $b_{\a+e_j}(x)=(\pa_{x_j}b_{\a})(x)+b_{\a}(x)b_{e_j}(x)$, then $|(1+|x|^2)^{(|\a|+|\b|)/2}\pa_{x}^{\b}b_{\a}(x)|\leq C_{\a,\b,N}$ follows. This proves $(\ref{mulem1})$ for $|\a|=M+1$. $(ii)$ is easily proved. 
\end{proof}

\begin{cor}\label{mucor}
For $k\in \re$ we define $\L_k=(I-\Delta)^{k/2}$. Then 
\begin{align*}
\|\m\L_k\m^{-1}\L_{-k}\|_{B(L^p(\re^d))}+\|\m\L_k\m^{-1}L_{-k}\|_{B(\mathcal{B})}+\|\m\L_k\m^{-1}L_{-k}\|_{B(\mathcal{B}^*)}\leq& C_{N,k,p},\\
\|\L_k\m\L_{-k}\m^{-1}\|_{B(L^p(\re^d))}+\| \L_k\m \L_{-k}\m^{-1}\|_{B(\mathcal{B})}+\|\L_k\m \L_{-k}\m^{-1}\|_{B(\mathcal{B}^*)}\leq& C_{N,k,p},
\end{align*}
with some $C_{N,k,p}>0$ independent of $0<\c\leq 1$ for $\m\in \{\m_{N,\c}, \m_{N,\c}^{-1}\}$ and $1<p<\infty$.
\end{cor}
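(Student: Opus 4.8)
The plan is to show that each of the four operators in the statement equals the identity plus a pseudodifferential operator of order $-1$ whose symbol has seminorms controlled uniformly in $0<\c\le 1$, and then to quote the standard $L^p$- and $\mathcal{B}$--$\mathcal{B}^*$-boundedness of such operators. The four operators are handled in the same way, and moreover $\L_k\m\L_{-k}\m^{-1}$ and $\m\L_k\m^{-1}\L_{-k}$ are $L^2$-adjoints of one another after the relabeling $\m\leftrightarrow\m^{-1}$ (using self-duality of $L^p$ and the duality between $\mathcal{B}$ and $\mathcal{B}^*$), so I will concentrate on $E_\m\coloneqq\m\L_k\m^{-1}\L_{-k}-I$ for $\m\in\{\m_{N,\c},\m_{N,\c}^{-1}\}$ and $k\in\re$.

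First I would use $\L_k\L_{-k}=I$ on $\mathcal{S}'(\re^d)$ to write $E_\m=\m[\L_k,\m^{-1}]\L_{-k}$. Since $\L_k$ is a classical Fourier multiplier of order $k$, $\m^{-1}\in C^{\infty}(\re^d)$, and $\L_{-k}$ has order $-k$, the pseudodifferential calculus gives that $[\L_k,\m^{-1}]$ has order $k-1$, so that $E_\m$ is a pseudodifferential operator of order $-1$ whose full symbol equals $\m(x)\,\sigma_{[\L_k,\m^{-1}]}(x,\x)\,\langle\x\rangle^{-k}$. The key observation is that $\sigma_{[\L_k,\m^{-1}]}(x,\x)$ depends on $x$ only through $\m^{-1}$ and its $x$-derivatives: this is visible in the exact oscillatory-integral representation of the symbol of $\L_k\circ M_{\m^{-1}}$, whose $x$-independent leading part cancels against $M_{\m^{-1}}\circ\L_k$. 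Expanding, the contributions combine into factors $\pa_\x^{\a}\langle\x\rangle^{k}\cdot\langle\x\rangle^{-k}$ (a symbol of order $-|\a|\le-1$ with every $\x$-derivative gaining a power of $\langle\x\rangle^{-1}$) multiplied by $\m(x)\,\pa_x^{\a}\m^{-1}(x)$; by Lemma~\ref{mulemma}$(i)$ — applied with either $\m=\m_{N,\c}$ or $\m=\m_{N,\c}^{-1}$ — this last function and all its derivatives are bounded uniformly in $\c$. Hence $E_\m$ has a symbol in $S^{-1}_{1,0}$ with seminorms independent of $\c$. For $k$ a non-negative even integer this is elementary: then $\L_k=(I-\Delta)^{k/2}$ is a differential operator, the commutator expansion terminates, and $E_\m$ is literally a finite sum $\sum M_{\beta_\a}n_\a(D)$ with $\beta_\a\in C_b^{\infty}(\re^d)$ uniformly and $n_\a$ a Fourier multiplier of order $\le-1$.

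Finally I would conclude: a pseudodifferential operator with symbol in $S^{-1}_{1,0}$ is bounded on $L^p(\re^d)$ for $1<p<\infty$ by the Calder\'on--Zygmund/Mikhlin theorem, and on $\mathcal{B}$ and $\mathcal{B}^*$ — its kernel has an integrable singularity on the diagonal and decays faster than any polynomial off it, or, alternatively, by the dyadic decomposition used in the proof of Lemma~\ref{patlem} — with norms controlled by finitely many symbol seminorms; applied to $E_\m$ these bounds are uniform in $\c$. Combined with $\m\L_k\m^{-1}\L_{-k}=I+E_\m$ and the adjoint/relabeling remark, this yields all the asserted estimates.

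The hard part will be the uniform-in-$\c$ symbol estimate for $E_\m$ when $k$ is not an even integer, i.e. bounding the remainder in the symbol expansion of $[\L_k,\m^{-1}]$: there the weight $\m(x)$ is paired with a translated weight $\m^{-1}(x+z)$ rather than $\m^{-1}(x)$. This is exactly where Lemma~\ref{mulemma}$(ii)$ enters, bounding $\m(x)\m^{-1}(x+z)$ by $C\langle z\rangle^{2N}$ uniformly in $\c$; weighed against the decay produced by integrating by parts in the oscillatory integral, this gives the required uniform control.
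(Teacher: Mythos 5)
Your proposal is correct and takes essentially the approach the paper intends: the paper's proof is a one-line citation to \cite[Lemma 3.2]{IS}, and your argument — reducing to $E_\m=\m[\L_k,\m^{-1}]\L_{-k}$, showing it is a $\psi$DO of order $-1$ with $\c$-uniform seminorms via Lemma \ref{mulemma}(i) for the diagonal terms and Lemma \ref{mulemma}(ii) against the oscillatory decay for the remainder, then concluding by $L^p$/$\mathcal{B}$-$\mathcal{B}^*$ boundedness and the duality/relabeling remark — is precisely the mechanism behind that cited lemma.
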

\begin{proof}
The proof is same as in \cite[Lemma 3.2]{IS} by virtue of Lemma \ref{patlem} and \ref{mulemma}.
\end{proof}

\begin{proof}[Proof of Proposition \ref{superprop}]
Let $Y_1\in \{L^{p}(\re^d), \mathcal{B}\}$ and $Y_2\in \{L^{q}(\re^d), \mathcal{B}^*\}$.
If necessary, we may assume $\supp \chi $ is small enough. In fact, by using a partition of unity $\{\chi_j\}_{j=1}^M$ such that $\sum_{j=1}^M\chi_j=1$ on $\supp \chi$, we have
\begin{align*}
\|\m_{N,\c}(x) \chi(D) u\|_{Y_2}\leq& \sum_{j=1}^M\|\m_{N,\c}(x) (\chi_j\chi)(D) u\|_{Y_2},\\
\sum_{j=1}^M\|\m_{N,\c}(x) (T(D)-\l)(\chi_j\chi)(D) u\|_{Y_1}\leq& C_{N,m,p}\|\m_{N,\c}(x) (T(D)-\l)\chi(D) u\|_{Y_1},
\end{align*}
where we use the triangle inequality in the first line and Lemma \ref{patlem} in the second line. Thus we may replace $\chi(D)$ by $(\chi_j\chi)(D)$ in $(\ref{super1})$.
 
We may suppose $\hat{u}$ and $\hat{f}$ are supported in $\supp \chi$ and we may suppose $\pa_{\x_d}T\neq 0$ on $\supp\chi$ by rotating the coordinate and by taking $\supp \chi$ small enough. We set $\x_j^+=\e_0 e_j+\sqrt{1-\e_0^2}e_d$ for $j=1,...,d-1$ and $\x_d^+=\x_d$, where $\e_0>0$ is a small constant and $(e_1,...,e_d)$ is the standard basis of $\re^d$. Since $(\x_1^+, ...,\x_d^+)$ is the basis of $\re^d$, then
\begin{align*}
C^{-1}\sum_{j=1}^d\tilde{\m}_{N,\c}(x\cdot \x_j^+) \leq \m_{N,\c}(x)\leq C\sum_{j=1}^d\tilde{\m}_{N,\c}(x\cdot \x_j^+)
\end{align*}
with some constant $C>0$ independent of $\c$, where 
\begin{align*}
\tilde{\m}_{N,\c}(t)=(1+t^2)^N(1+\c t^2)^{-N}. 
\end{align*}
Thus it suffices to prove that
\begin{align*}
\|\tilde{\m}_{N,\c}(x\cdot \x_j^+)u\|_{Y_2}\leq C_{N}\|\tilde{\m}_{N,\c}(x\cdot \x_j^+)(T(D)-\l)u\|_{Y_1}
\end{align*}
for each $j=1,...,d$. If $\e_0>0$ is small, then $\pa_{\x_d}T\neq 0$ implies $\x_j^+\cdot \nabla T(\x)=\e_0 \pa_{\x_1}T+\sqrt{1-\e_0^2}\pa_{\x_d}T\neq 0$ on $\supp \chi$. Thus by rotating the coordinate, we may reduce to prove 
\begin{align*}
\|\tilde{\m}_{N,\c}(x_d)u\|_{Y_2}\leq C_{N}\|\tilde{\m}_{N,\c}(x_d)(T(D)-\l)u\|_{Y_1}.
\end{align*}
We remark that this reduction is the only part to miss proving this Proposition when $X=\ze$. In fact, there are no basis containing the normal vector of $x\cdot \x_j^+$-direction when $X=\ze$.

Set $f=(T(D)-\l)u$. By the implicit function theorem, we have $T(\x)-\l=e(\x,\l)(\x_d-h_{\l}(\x'))$ as in $(\ref{Timp})$. Then we have $e(\x,\l)^{-1}\hat{f}(\x)=(\x_d- h_{\l}(\x'))\hat{u}(\x)$ on $\supp \chi$. We denote $\tilde{f}(\x',x_d)$ is the Fourier transform of $f$ with respect to $\x_1,...,\x_{d-1}$-variables and  set $\hat{g}(\x)=e(\x,\l)^{-1}\hat{f}(\x)$. Here $e(\x,\l)^{-1}$ is well-defined on $\supp \hat{f}$ since $\supp f\subset \supp \chi$. Then 
\begin{align*}
(D_{x_d}-h_{\l}(\x'))\tilde{u}(\x',x_d)=\tilde{g}(\x',x_d),
\end{align*}
Since $\tilde{u}$ and $\tilde{g}$ are smooth, by using variation of parameters, we can write
\begin{align*}
\tilde{u}(\x',x_d)=&\int_{-\infty}^{x_d}e^{2\pi i(x_d-y_d)h_{\l}(\x')}\tilde{g}(\x',y_d)dy_d\\
=&-\int_{x_d}^{\infty}e^{2\pi i(x_d-y_d)h_{\l}(\x')}\tilde{g}(\x',y_d)dy_d.
\end{align*}
Note that we use the first line of the above representation if $x_d\leq 0$ and the second line if $x_d\geq 0$.
Taking the inverse Fourier transform and multiplying $\tilde{\m}_{N,\c}(x_d)$, we have
\begin{align*}
\tilde{\m}_{N,\c}(x_d)u(x)=&\int_{\re}\int_{\re^{d-1}}K_{N,\c}(x'-y',x_d, y_d)\tilde{\m}_{N,\c}(y_d)g(y)dy'dy_d
\end{align*}
where
\begin{align*}
K_{N,\c}(x'-y',x_d,y_d)=&\frac{\tilde{\m}_{N,\c}(x_d)}{\tilde{\m}_{N,\c}(y_d)}(\chi_{x_d<0}\chi_{x_d\leq y_d}-\chi_{x_d>0}\chi_{x_d\leq y_d})\\
&\times \int_{\widehat{\re^{d-1}}}e^{2\pi i(x'-y')\cdot \x'+2\pi i(x_d-y_d)h_{\l}(\x')}\g(\x')d\x'.
\end{align*}
Note that $\frac{\tilde{\m}_{N,\c}(x_d)}{\tilde{\m}_{N,\c}(y_d)}(\chi_{x_d<0}\chi_{x_d\leq y_d}-\chi_{x_d>0}\chi_{x_d\leq y_d})\leq 1$. Let $R$ be the linear operator on $\re^d$ with the integral kernel $K_{N,\c}$. We recall $\supp\hat{f}\subset \supp \chi$ and $\hat{g}=e(\x,\l)^{-1}\hat{f}(\x)$. Hence we can write
\begin{align*}
\tilde{\m}_{N,\c}(x_d)u(x)=K_{N,\c}(x'-y')\ast (\tilde{\m}_{N,\c}(y_d)\f(D)e(D,\l)^{-1}\tilde{\m}_{N,\c}^{-1}(y_d)\tilde{\m}_{N,\c}(y_d)f)(x)
\end{align*}
where $\f\in C_c^{\infty}(\re^d)$ such that $\f=1$ on $\supp \chi$. By virtue of Lemma \ref{patlem}, it follows that the operator norms of $\tilde{\m}_{N,\c}(y_d)\chi(D)e(D,\l)^{-1}\tilde{\m}_{N,\c}(y_d)^{-1}$ on $L^p(\re^d)$ ($1<p<\infty$), $\mathcal{B}$ and $\mathcal{B}^*$ are uniformly bounded in $\l\in I$. 

By virtue of Propositions \ref{abpr2} and \ref{away}, it suffices to $K_{N,\c}$ and $K_{N,\c}^*(x,y)=\bar{K}_{N,\c}(y,x)$ satisfies Assumptions \ref{assc} and \ref{assd}. To see this, we may mimic the proof of Lemma \ref{unilem2}. We omit the detail.

\end{proof}

\section{Applications}

\subsection{Fractional Schr\"odinger operators and Dirac operators}
In this subsection, we suppose that $T(D)$ is the one of the following operators:
\begin{align*}
T(D)=(-\Delta)^{s/2},\, T(D)=(-\Delta+1)^{s/2}-1,\, T(D)=\mathcal{D}_0,\, T(D)=\mathcal{D}_{1},
\end{align*}
where $0<s\leq d$.

\begin{proof}[Proof of Theorem \ref{diracth}]
We consider the case when $T(D)=(-\Delta)^{s/2}$ or $T(D)=(1-\Delta)^{s/2}$ only. The case when $T(D)=\mathcal{D}_0$ or $T(D)=\mathcal{D}_1$ is similarly proved if we notice
\begin{align*}
\mathcal{D}_0^2=-\Delta I_{n\times n},\,\,  \mathcal{D}_0^2=(-\Delta+1) I_{n\times n}
\end{align*}
as in the proof of \cite[Theorem 3.1]{C2}. We take a real-valued function $\chi\in C_c^{\infty}(\re^d, [0,1])$ such that $\chi=1$ on $T^{-1}(I)$ and $\supp \chi\subset \re \setminus \L_c(T(D))$. 
Note that $M_{\l}=\{T(\x)=\l\}$ is sphere and hence has non vanishing Gaussian curvature. if $\l\in \s(T(D))\setminus \L_c(T(D))$. Then we apply Theorem \ref{mainprop} with $k=(d-1)/2$ (see \cite[Theorem 1.2.1]{S}) and obtain
\begin{align}\label{eses}
\sup_{z\in I_{\pm}}\|\chi(D)R_0^{\pm}(z)\|_{B(L^{p}(\re^d), L^{q}(\re^d))}<\infty
\end{align}
for $(p,q)\in S_{\frac{d-1}{2}}$. On the other hand, by the support property of $\chi$ and the Hardy-Littlewood-Sobolev inequality, we have
\begin{align}\label{esaway}
\sup_{z\in I_{\pm}}\|(1- \chi(D))R_0(z)\|_{B(L^p(\re^d),L^q(\re^d))}<\infty
\end{align}
if $1/p-1/q\leq s/d$. In fact, if $2\a=-d/2+d/p$ and $2\b=-d/q+d/2$, then
\begin{align*}
\|(1- \chi(D))&R_0(z) \|_{B(L^p(\re^d), L^q(\re^d))}\\
\leq& \|(I-\Delta)^{-\a}\|_{B(L^p(\re^d),L^2(\re^d))} \|(1- \chi(D))(I-\Delta)^{\a+\b}R_0(z)\|_{B(L^2(\re^d))} \\
&\times \|(I-\Delta)^{-\b}\|_{B(L^2(\re^d),L^q(\re^d))}.
\end{align*}
Thus $(\ref{esaway})$ follows from the the Hardy-Littlewood-Sobolev inequality.
Combining $(\ref{eses})$ with $(\ref{esaway})$, we obtain $(i)$. $(ii)$ is similarly proved. 

\begin{lem}\label{dirBir}

\item[$(i)$]
Suppose $2d/(d+1)\leq s< d$. Let $0<\d\leq 1$, $r\in (2d/s, 2(d+1)-4\d]$ and $r_1,r_2\in (1, 2(d+1)]$ satisfying
\begin{align*}
\frac{2}{d+1}\leq \frac{1}{r_1}+\frac{1}{r_2} \leq \frac{s}{d}.
\end{align*}
Then
\begin{align*}
&\sup_{z\in I_{\pm}}\|W_1R_0^{\pm}(z)W_2\|_{B(L^2(\re^d))}\leq C\|W_1\|_{L^{r_1}(\re^d)}\|W_2\|_{L^{r_2}(\re^d)}\\
&\|W_3(R_0^{\pm}(z)-R_0^{\pm}(w))W_4\|_{B(L^2(\re^d))}\leq C|z-w|^{\b_{\d}}\|W_3\|_{L^{r}(\re^d)}\|W_4\|_{L^{r}(\re^d)}
\end{align*}
for $z,w\in I_{\pm}$ with $|z-w|\leq 1$ and $W_1\in L^{r_1}(\re^d)$, $W_2\in L^{r_2}(\re^d)$, $W_3, W_4\in L^{r}(\re^d)$.

Moreover, if $W_1\in L^{r_1}(\re^d)$ and $W_2\in L^{r_2}(\re^d)$, then $W_1R_0^{\pm}(z)W_2\in B_{\infty}(L^2(\re^d))$ follows for $z\in I_{\pm}$ and a map $z\in I_{\pm}\mapsto W_1R_0^{\pm}(z)W_2$ is continuous. 

\item[$(ii)$]
Suppose $0<s<2d/(d+1)$. Let $0<\d\leq 1$, $r\in (1, 2(d+1)-4\d]$ , $r_1,r_2,\in (1,2(d+1)]$ and $r_1', r_2', r'\in [2d/s ,\infty)$ satisfying 
\begin{align*}
\frac{2}{d+1}\leq \frac{1}{r_1}+\frac{1}{r_2},\quad \frac{1}{r_1'}+\frac{1}{r_2'}\leq \frac{s}{d}.
\end{align*}
The all results in Lemma \ref{dirBir} part $(i)$ hold if we replace $L^{r_1}(\re^d)$, $L^{r_2}(\re^d)$ and $L^{r}(\re^d)$ by $L^{r_1}(\re^d)\cap L^{r_1'}(\re^d)$, $L^{r_2}(\re^d)\cap L^{r_2'}(\re^d)$ and $L^{r}(\re^d)\cap L^{r'}(\re^d)$ respectively.
\end{lem}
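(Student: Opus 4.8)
\noindent\emph{Proof plan.} The plan is to split the resolvent into a microlocal piece near the characteristic sphere and a smooth remainder, bound each by results already in hand, and conjugate by $W_1,W_2$ via H\"older's inequality. Fix $\chi\in C_c^\infty(\re^d,[0,1])$ with $\chi=1$ on $T^{-1}(I)$ and $\supp\chi\subset\re^d\setminus\L_c(T(D))$, as in the proof of Theorem \ref{diracth}, and write $R_0^\pm(z)=\chi(D)R_0^\pm(z)+(1-\chi(D))R_0^\pm(z)$. Because each $M_\l=\{T(\x)=\l\}$ with $\l\in\sigma(T(D))\setminus\L_c(T(D))$ is a sphere with everywhere nonvanishing Gaussian curvature, Example \ref{ex} gives $(\ref{surmes})$ with $k=(d-1)/2$, so Corollary \ref{abBir} (with this $k$, for which $4k+2=2d$ and $4k_\d+2=2d-4\d$) supplies, for $r_1,r_2\in(1,2d]$ with $1/r_1+1/r_2\ge 2/(d+1)$, the bilinear estimate for $W_1\chi(D)R_0^\pm(z)W_2$, its compactness in $B_\infty(L^2(\re^d))$, its norm-continuity in $z$, and for $r=r_1=r_2\in(1,2d-4\d)$ the $\b_\d$-H\"older-continuity estimate. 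For the remainder, the multiplier $(1-\chi(\x))/(T(\x)-z)$ vanishes near the characteristic set and satisfies symbol estimates of order $-s$ as $|\x|\to\infty$, so $(1-\chi(D))R_0^\pm(z)$ is a convolution by an $L^1(\re^d)$ kernel whose only singularity, $\sim|x-y|^{-(d-s)}$ at the diagonal, makes it bounded $L^p(\re^d)\to L^q(\re^d)$ uniformly in $z\in I_\pm$ whenever $1/p-1/q\le s/d$ (and the difference $(1-\chi(D))(R_0^\pm(z)-R_0^\pm(w))$, a multiplier of order $-2s$, has norm $O(|z-w|)$ on the same spaces); composing with $W_1,W_2$ and using H\"older's inequality then yields the corresponding statements for the remainder for all $r_1,r_2$ with $1/r_1+1/r_2\le s/d$, the few subcases with $\min(r_1,r_2)\le 2$ being reduced to the others by a truncation $W_i=W_i 1_{|W_i|\le M}+W_i 1_{|W_i|>M}$. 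Compactness and continuity in $z$ of $W_1(1-\chi(D))R_0^\pm(z)W_2$ follow from the Hilbert--Schmidt property of the localized kernels together with an $\e/3$-argument, exactly as in the proof of Corollary \ref{abBir}.

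Adding the two pieces proves part (i) on the range $r_1,r_2\in(1,2d]$, $2/(d+1)\le 1/r_1+1/r_2\le s/d$. For part (ii), where $0<s<2d/(d+1)$ so that no single exponent pair can satisfy both inequalities simultaneously, one uses the same decomposition but feeds $\chi(D)R_0^\pm(z)$ with the $L^{r_i}$-factor of $W_i\in L^{r_i}\cap L^{r_i'}$ and the remainder with the $L^{r_i'}$-factor; the condition $1/r_1'+1/r_2'\le s/d$ needed for the remainder forces $r_i'\ge 2d/s$, which is precisely the stated range, and it is for this reason that the intersection of spaces cannot be avoided.

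\noindent\emph{Main obstacle.} The delicate point is enlarging the range of the microlocal piece from $r_i\le 2d$ (resp. $r\le 2d-4\d$) to $r_i\le 2(d+1)$ (resp. $r\le 2(d+1)-4\d$). Since $1/r_1+1/r_2\ge 2/(d+1)$ forces at most one of $r_1,r_2$ to exceed $2d$, and since $\|W_1R_0^+(z)W_2\|_{B(L^2)}=\|\overline{W_2}\,R_0^-(\bar z)\,\overline{W_1}\|_{B(L^2)}$ lets us swap their roles, we may assume it is $r_1$, whence $r_2\le 2d(d+1)/(3d-1)$. Here one cannot simply factor $\chi(D)R_0^\pm(z)$ through an $L^p\to L^q$ bound of Theorem \ref{mainprop}, because the target exponent would lie past the Stein--Tomas exponent $2d/(d-1)$, in the range forbidden by the Kenig--Ruiz--Sogge constraints; instead one must use the endpoint estimates of Proposition \ref{away}, which via Bourgain's interpolation trick place $\chi(D)R_0^\pm(z)$ as a bounded operator $L^{p,1}(\re^d)\to L^{q,\infty}(\re^d)$ on the critical line $1/p-1/q=2/(d+1)$, $1/q=(d-1)/(2d)$, and then combine this with the Lorentz refinement of H\"older's inequality and real interpolation against the interior bounds already established. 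I expect the only substantive work to be the bookkeeping with Lorentz exponents that identifies the admissible portion of this endpoint line with the stated ranges of $(r_1,r_2)$; this is also the step at which the lower bound $r>2d/s$ in the H\"older-continuity statement becomes necessary. Once the bilinear bound is available at the endpoint, the compactness and continuity assertions there follow as in the first paragraph.
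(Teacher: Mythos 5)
Your overall strategy agrees with the paper's: decompose $R_0^{\pm}(z)=\chi(D)R_0^{\pm}(z)+(1-\chi(D))R_0^{\pm}(z)$, invoke Corollary \ref{abBir} (with $k=(d-1)/2$) for the microlocal piece, and handle the elliptic remainder by Sobolev/Hardy--Littlewood--Sobolev plus H\"older. The paper's proof is stated in two lines: it observes that $W_1(1-\chi(D))R_0^{\pm}(z)W_2$ is compact and smooth in $z$ (using $dR_0(z)/dz=R_0(z)^2$ and Rellich--Kondrachov) and then says ``the other parts of the proof are same as in the proof of Corollary \ref{abBir}.'' Your treatment of the remainder via explicit symbol and kernel estimates is a harmless variant of that. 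For part (ii), splitting $W_i\in L^{r_i}\cap L^{r_i'}$ and feeding the $L^{r_i}$-component to the microlocal piece and the $L^{r_i'}$-component to the remainder is indeed the intended mechanism.

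You are right, however, to single out the upper bound on $r_1,r_2$ as the ``main obstacle,'' but you should push your own observation to its conclusion. With $k=(d-1)/2$ one has $4k+2=2d$ and $4k_{\d}+2=2d-4\d$, so Corollary \ref{abBir} controls $W_1\chi(D)R_0^{\pm}(z)W_2$ only for $r_1,r_2\in(1,2d]$ (resp. $r\in(1,2d-4\d)$), and the remainder imposes no upper bound on $r_j$ but only the sum constraint $1/r_1+1/r_2\le s/d$. The paper gives no argument whatsoever beyond ``same as Corollary \ref{abBir},'' so its proof does not reach the stated endpoint $2(d+1)$. More importantly, the endpoint scheme you sketch cannot reach it either: at the Lorentz endpoint of Proposition \ref{away}/Bourgain's trick one has $(1/p,1/q)=((d+1)/(2d),(d-1)/(2d))$, which after H\"older gives $1/r_1=1/2-1/q=1/(2d)$ and $1/r_2=1/p-1/2=1/(2d)$, i.e. exactly $r_1=r_2=2d$. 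Real interpolation with the interior cannot move past this, because $1/q<(d-1)/(2d)$ (equivalently $r_1<2d$) is precisely the Stein--Tomas barrier built into $S_{(d-1)/2}$; there is no estimate in the paper placing $\chi(D)R_0^{\pm}$ from $L^p$ into any $L^q$ with $1/q<1/2-1/(2d)$. Thus what is actually provable by these methods is $r_1,r_2\in(2,2d]$ (resp. $r\in[2d/s,2d-4\d)$), and the value $2(d+1)$ (resp. $2(d+1)-4\d$) in the lemma statement must be regarded as a misprint for $2d$ (resp. $2d-4\d$). Note this is harmless for the paper's applications: Theorem \ref{diracth} (iii) and (iv) take $V\in L^{(d+1)/2}$, hence $W_1=W_2=|V|^{1/2}\in L^{d+1}$ with $r_1=r_2=d+1\le 2d$, which lies comfortably inside the correct range. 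So the right move for your write-up is to prove the lemma with $2d$ in place of $2(d+1)$ and flag the original statement as a typographical slip, not to attempt the Lorentz-interpolation extension you describe, which cannot succeed.
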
 
\begin{proof}
Note that for $W_1,W_2\in C_c^{\infty}(\re^d)$, it follows that $W_1(1-\chi(D))R_0^{\pm}(z)W_2$ is compact and smooth in $z\in I_{\pm}$ by using $dR_0(z)/dz=R_0(z)^2$ and the Rellich-Kondrachov theorem.
The other parts of the proof are same as in the proof of Corollary \ref{abBir}. 
\end{proof}

Part $(iii)$: Existence and completeness of the wave operators are similarly proved as in the proof of Theorem \ref{discth} $(iv)$ in subsection \ref{discapp} by using Lemma \ref{dirBir}.

Proof of Part $(iv)$ is proved in subsection \ref{Casub}.

\end{proof}

\subsection{Carleman estimate, Proof of Theorem \ref{diracth} $(iv)$}\label{Casub}

First, we give the Carleman estimate for $T(D)$. We recall $\m_{N,\c}(x)=(1+|x|^2)^N(1+\c|x|^2)^{-N}$ and $\L_l=(I-\Delta)^{l/2}$. For $1<p<\infty$ and $l\in \re$, we introduce the standard Sobolev spaces
\begin{align*}
W^{l,p}=\{u\in \mathcal{S}'(\re^d) \mid \L_{l}u\in L^p(\re^d)\},\,\, \|u\|_{W^{l,p}}=\|\L_lu\|_{L^p(\re^d)}.
\end{align*}
We set $p_d=2(d+1)/(d+3)$, $p_d^*=2(d+1)/(d-1)$, $l_d=s/2-d/(d+1)$,
\begin{align*}
X_s=\begin{cases}
W^{-l_d, p_d}+\L_{s/2}\mathcal{B},\,\, \text{if}\,\, 2d/(d+1)\leq s< d,\\
(L^{p_d}(\re^d)\cap L^{2d/(d+s)}(\re^d))+\L_{s/2}\mathcal{B},\,\, \text{if}\,\, 0< s< 2d/(d+1),
\end{cases}
\end{align*}
and
\begin{align*}
X_s^*=\begin{cases}
W^{l_d, p_d^*}\cap \L_{-s/2}\mathcal{B}^*,\,\, \text{if}\,\, 2d/(d+1)\leq s< d,\\
(L^{p_d^*}(\re^d)+ L^{2d/(d-s)}(\re^d))\cap\L_{-s/2}\mathcal{B}^*,\,\, \text{if}\,\, 0< s< 2d/(d+1).
\end{cases}
\end{align*}
By the Sobolev embedding theorem, we have
\begin{align}\label{Sincusion}
X_s\hookrightarrow W^{-s/2,2},\,\, W^{s/2,2}\hookrightarrow X_s^*.
\end{align}

\begin{prop}\label{Car}
Let $N\geq 0$ be a real number satisfying
\begin{align}\label{Ncond}
N<s/2 ,\,\, \text{if}\,\, T(D)=(-\Delta)^{s/2}\,\, \text{with}\,\, s\notin 2\mathbb{N}. 
\end{align}
Then there exists $C_{N,d}>0$ independent of $0<\c\leq 1$ such that
\begin{align*}
\|\m_{N,\c}(x)u\|_{X_s^*}\leq C_{N,d}\|\m_{N,\c}(x) (T(D)-\l) u\|_{X_s}
\end{align*}
for $u\in \mathcal{B}_0^*$ and $|\l|\in I$.
\end{prop}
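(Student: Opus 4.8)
The plan is to localise in frequency to the characteristic surfaces $M_\l=\{T(\x)=\l\}$, to treat the elliptic region by bare hands, and to feed the characteristic region into Theorem~\ref{mainprop}~$(iii)$ (Proposition~\ref{superprop}). First I would fix $\chi\in C_c^\infty(\re^d,[0,1])$ equal to $1$ near $\bigcup_{|\l|\in I}M_\l$, with $\supp\chi$ compact and free of critical points of $T$, and split $u=\chi(D)u+(1-\chi(D))u$; for the Dirac operators one reduces first to the scalar symbols via $\mathcal{D}_0^2$ and $\mathcal{D}_1^2$ as in the proof of Theorem~\ref{diracth}. Since the $M_\l$ are spheres with nonvanishing Gaussian curvature, Assumption~\ref{assa} holds near $\supp\chi$ with $k=(d-1)/2$, and it is this value of $k$ that I would use in Proposition~\ref{superprop}. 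The exponents defining $X_s,X_s^*$ are exactly tuned for this: $(1/p_d,1/p_d^*)$ is the corner of $S_{(d-1)/2}$, and $s-2l_d=d(1/p_d-1/p_d^*)=2d/(d+1)$; for $0<s<2d/(d+1)$, where $1/p_d-1/p_d^*>s/d$, the extra exponents $2d/(d\pm s)$ (with $1/(2d/(d+s))-1/(2d/(d-s))=s/d$) are there to make the elliptic Hardy--Littlewood--Sobolev step go through. All weight manipulations are carried out with Lemma~\ref{patlem} and Corollary~\ref{mucor}, which let me commute $\m_{N,\c}^{\pm1}$ past Fourier multipliers with $C_c^\infty$ symbol and past the powers $\L_a$ with bounds independent of $0<\c\le1$.

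For the elliptic region I would observe that $(1-\chi(\x))(T(\x)-\l)^{-1}$ is bounded and, away from $\x=0$, smooth with symbol estimates of order $-s$ at infinity, uniformly in $|\l|\in I$. Writing $\m_{N,\c}(1-\chi(D))u=\bigl[\m_{N,\c}(1-\chi(D))(T(D)-\l)^{-1}\m_{N,\c}^{-1}\bigr]\,\m_{N,\c}(T(D)-\l)u$, it suffices to bound the bracketed operator $X_s\to X_s^*$ uniformly in $\c$ and $\l$. Away from $\x=0$ this comes from Lemma~\ref{patlem}, Corollary~\ref{mucor}, the inclusions $X_s\hookrightarrow W^{-s/2,2}$, $W^{s/2,2}\hookrightarrow X_s^*$ of $(\ref{Sincusion})$, and the Sobolev mappings $W^{-l_d,p_d}\to W^{-l_d+s,p_d}\hookrightarrow W^{l_d,p_d^*}$ (valid precisely because $s-2l_d=d(1/p_d-1/p_d^*)$) together with $L^{2d/(d+s)}\to L^{2d/(d-s)}$ when $0<s<2d/(d+1)$. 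Near $\x=0$ the symbol $|\x|^s$ of $(-\Delta)^{s/2}$ is only $C^{\lfloor s\rfloor}$, so conjugating the low-frequency part of the resolvent by $\m_{N,\c}\sim\langle x\rangle^{2N}$ needs roughly $2N$ derivatives of it to be controlled; this is exactly what forces $2N<s$, i.e. $(\ref{Ncond})$. For $(-\Delta+1)^{s/2}-1$ and for $\mathcal{D}_0^2,\mathcal{D}_1^2$ the symbols are smooth everywhere and no condition on $N$ is needed.

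For the characteristic region, both $\chi(D)u$ and $(T(D)-\l)\chi(D)u=\chi(D)(T(D)-\l)u$ have Fourier support in the fixed compact set $\supp\chi$. On such frequency-localised functions $w$, with $\chi_2\in C_c^\infty$, $\chi_2=1$ on $\supp\chi$, the operators $\L_{l_d}\chi_2(D)$ and $\L_{s/2}\chi_2(D)$ have $C_c^\infty$ symbols, so Lemma~\ref{patlem} and Corollary~\ref{mucor} give $\|\m_{N,\c}w\|_{W^{l_d,p_d^*}}\lesssim\|\m_{N,\c}w\|_{L^{p_d^*}}$ and $\|\m_{N,\c}w\|_{\L_{-s/2}\mathcal{B}^*}\lesssim\|\m_{N,\c}w\|_{\mathcal{B}^*}$ uniformly in $\c$, hence $\|\m_{N,\c}\chi(D)u\|_{X_s^*}\lesssim\|\m_{N,\c}\chi(D)u\|_{L^{p_d^*}\cap\mathcal{B}^*}$. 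Proposition~\ref{superprop}, with $k=(d-1)/2$ and the corner pair $(1/p_d,1/p_d^*)\in S_{(d-1)/2}$, then bounds the right-hand side by $\|\m_{N,\c}\chi(D)(T(D)-\l)u\|_{L^{p_d}+\mathcal{B}}$. To close, I would show $\|\m_{N,\c}\chi(D)g\|_{L^{p_d}+\mathcal{B}}\lesssim\|\m_{N,\c}g\|_{X_s}$ for $g=(T(D)-\l)u$ by duality: $L^{p_d}+\mathcal{B}=(L^{p_d^*}\cap\mathcal{B}_0^*)'$, and for a test function $\phi$ in the predual, $\langle\m_{N,\c}\chi(D)g,\phi\rangle=\langle\m_{N,\c}g,\m_{N,\c}^{-1}\chi(D)\m_{N,\c}\phi\rangle$; since $\m_{N,\c}^{-1}$ is admissible in Lemma~\ref{patlem} and Corollary~\ref{mucor} and $\chi(D)$ smooths, the operator $\m_{N,\c}^{-1}\chi(D)\m_{N,\c}$ maps $L^{p_d^*}\cap\mathcal{B}_0^*$ boundedly into $X_s^*=W^{l_d,p_d^*}\cap\L_{-s/2}\mathcal{B}^*$, uniformly in $\c$. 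Adding the two regions and passing from $\mathcal{S}(\re^d)$ to $u\in\mathcal{B}_0^*$ by the usual mollification argument then finishes it.

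I expect the main obstacle to be this bookkeeping in the characteristic region: one has to pass back and forth between the mixed Sobolev--Besov spaces $X_s,X_s^*$, with their $\L_{\pm s/2}$ twists and Sobolev scaling, and the plain spaces $L^{p_d}+\mathcal{B}$, $L^{p_d^*}\cap\mathcal{B}^*$ produced by Proposition~\ref{superprop}, all while keeping the constants independent of $0<\c\le1$ --- which is where the frequency localisation of $\chi$, the sum/intersection duality, and the weight lemmas have to be used in concert. A secondary difficulty is reconciling the low-frequency analysis of the singular symbol $|\x|^s$ with the polynomial growth of $\m_{N,\c}$, which is exactly what pins down the restriction $N<s/2$.
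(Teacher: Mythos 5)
Your proposal matches the paper's proof in all essentials: a frequency partition separating the characteristic region (fed into Proposition~\ref{superprop} with $k=(d-1)/2$ and the weight lemmas~\ref{patlem}, \ref{mulemma} and Corollary~\ref{mucor}) from the elliptic region (handled by boundedness of $(1-\chi)(T-\l)^{-1}$ together with the Sobolev embeddings $(\ref{Sincusion})$), with the constraint $N<s/2$ traced to the singularity of $|\x|^s$ at the origin. The only substantive difference is cosmetic: the paper splits your single ``elliptic'' region into a low-frequency piece $\chi_0$ and a high-frequency piece $\chi_2$, and for $\chi_0$ it supplies the concrete kernel-decay Lemma~\ref{kersin} (giving $|m(D)(x)|\lesssim(1+|x|)^{-d-s}$, so that conjugation by $\m_{N,\c}$ multiplies by $(1+|x-y|)^{2N}$ and the Young inequality requires $2N-d-s<-d$, i.e.\ $N<s/2$), which is the precise mechanism your ``needs roughly $2N$ derivatives'' heuristic is gesturing at.
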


\begin{rem}
The condition $(\ref{Ncond})$ is needed due to the singularity of the symbol $T(\x)=|\x|^s$ at $\x=0$.
\end{rem}

\begin{proof}
First, we assume $u\in \mathcal{S}(\re^d)$.
Let $\chi_0, \chi_1, \chi_2\in C^{\infty}(\re^d)$ be smooth functions such that $\chi_0,\chi_1\in C_c^{\infty}(\re^d)$ and
\begin{align*}
\chi_0+\chi_1+\chi_2=1,\,\, \chi_0(\x)=1\,\, \text{near}\,\, \x=0,\,\, \chi_1(\x)=1\,\, \text{on}\,\, \supp T^{-1}(I).
\end{align*}
By Lemma \ref{patlem}, it suffices to prove
\begin{align}\label{Carpr}
\|\m_{N,\c}(x)\g(D)u\|_{X_s^*}\leq C_{N,d}\|\m_{N,\c}(x)\g(D) (T(D)-\l) u\|_{X_s}
\end{align}
for $\g\in \{\chi_0, \chi_1, \chi_2\}$. The case when $\g=\chi_1$ directly follows from Proposition \ref{superprop} and Corollary \ref{mucor}. The case when $\g=\chi_2$ follows from Corollary \ref{mucor} and $(\ref{Sincusion})$:
\begin{align*}
\|\m_{N,\c}(x)\chi_2(D)u\|_{X_s^*}\leq& C\|\m_{N,\c}(x)\chi_2(D)u\|_{W^{s/2,2}}\\
=&C\|\L_{s/2}\m_{N,\c}(x)u\|_{L^{2}(\re^d)},\\
\L_{s/2}\m_{N,\c}=&(\L_{s/2}\m_{N,\c}  \L_{-s/2}\m_{N,\c}^{-1})\\
&\times(\m_{N,\c}\L_{s/2}\chi_3(D)(T(D)-\l)^{-1}\m_{N,\c}^{-1}\L_{s/2})\\
&\times\L_{-s/2} \m_{N,\c} \chi_2(D)(T(D)-\l),
\end{align*}
where $\chi_3\in C^{\infty}(\re^d)$ satisfies $\chi_3=1$ on $\supp \chi_2$ and $\supp\chi_3\cap T^{-1}(I)=\emptyset$. Moreover, the $L^2$-boundedness of $\L_{s/2}\m_{N,\c}  \L_{-s/2}\m_{N,\c}^{-1}$ follows from Corollary \ref{mucor} and $L^2$-boundedness of $\m_{N,\c}\L_{s/2}\chi_3(D)(T(D)-\l)^{-1}\m_{N,\c}^{-1}\L_{s/2}$ is proved by mimicking the proof of Corollary \ref{mucor}.

Finally, we deal with the case of $\g=\chi_0$. $(\ref{Carpr})$ with $T(D)\neq (-\Delta)^{s/2}$ or $T(D)=(-\Delta)^{s/2}$ for $s\in 2\mathbb{N}$ is similarly proved as in the proof of $(\ref{Carpr})$ with $\g=\chi_2$. Thus we may assume $T(D)=(-\Delta)^{s/2}$ with $s\notin 2\mathbb{N}$. For its proof, we need some lemmas.

\begin{lem}\label{kersin}
Let $s>0$ and $m\in C^{\infty}(\re^d\setminus \{0\})\cap C_c(\re^d)$ satisfying
\begin{align*}
|\pa_{\x}^{\a}m(\x)|\leq C_{\a}|\x|^{M_{\a}},\,\, M_{\a}=\begin{cases}
0,\,\, \text{if}\,\, \a=0,\\
s-N,\,\, \text{if}\,\, |\a|\geq 1.
\end{cases}
\end{align*}
Then $m(D)(x)=\int_{\re^d}e^{2\pi ix\cdot \x}m(\x)d\x$ satisfies
\begin{align*}
|m(D)(x)|\leq C(1+|x|)^{-s-d}.
\end{align*}
\end{lem}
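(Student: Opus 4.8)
The plan is to estimate $m(D)(x) = \int_{\re^d} e^{2\pi i x\cdot\xi} m(\xi)\,d\xi$ by a standard dyadic-decomposition-plus-integration-by-parts argument, the only subtlety being the mild singularity of $m$ at $\xi = 0$ (of order $|\xi|^{s-N}$ in the derivatives, once $s - N < 0$) which is integrable near the origin precisely because $N < s$ is \emph{not} assumed — instead one uses $N < s/2$, hence $s - N > s/2 > 0$, so there is no singularity of $m$ itself to worry about, only of its derivatives, and even those are not genuinely singular when $s \geq N$. Wait — re-reading the hypothesis, $M_\alpha = s - N$ for $|\alpha|\geq 1$, and we only know $N < s/2$ in the application, so $s - N > 0$ and $m$ together with all its derivatives is continuous; the point of the lemma is rather the \emph{decay} $|m(D)(x)| \lesssim (1+|x|)^{-s-d}$, which must come from the vanishing rate $|\xi|^{s-N}$ near $\xi = 0$ combined with the compact support and smoothness away from $0$.

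First I would dispose of the region $|x| \leq 1$: there $m \in C_c(\re^d) \cap L^1$, so $|m(D)(x)| \leq \|m\|_{L^1} \lesssim 1$, which is the claimed bound. For $|x| \geq 1$ I would fix a Littlewood–Paley partition $1 = \phi_0(\xi) + \sum_{j\geq 1}\phi(2^{-j}\xi)$ with $\phi$ supported in an annulus $|\xi|\sim 1$ and $\phi_0$ supported in $|\xi|\leq 2$, and write $m(D)(x) = \sum_{j\geq 0} I_j(x)$ with $I_j(x) = \int e^{2\pi i x\cdot\xi} m(\xi)\phi_j(\xi)\,d\xi$ (where $\phi_j = \phi(2^{-j}\cdot)$ for $j\geq 1$). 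Since $m$ is compactly supported, only finitely many $j$ (those with $2^j \lesssim 1$, i.e. $j \leq j_0$ for a fixed $j_0$) contribute, so there is no summation issue at infinity — the whole game is to get the right power of $|x|$.

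On the piece $\phi_j$ with $j\geq 1$, $m$ and its derivatives satisfy $|\partial_\xi^\alpha(m\phi_j)(\xi)| \lesssim 2^{j(s-N)} 2^{-j|\alpha|}$ on a set of measure $\sim 2^{jd}$ (using the derivative bound $|\partial^\alpha m| \lesssim |\xi|^{s-N}$ for $|\alpha|\geq 1$ and $|m|\lesssim 1$ with $|\xi|\sim 2^j \lesssim 1$; a slightly more careful Leibniz bookkeeping keeping the $|m|\leq 1$ factor for the top-order term gives $|\partial^\alpha(m\phi_j)|\lesssim 2^{-j|\alpha|}$ when $\alpha\neq 0$ hits $\phi_j$ and $m$ is left undifferentiated, which is the dominant case since $2^{j(s-N)}\leq 1$). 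Integrating by parts $M$ times against $e^{2\pi i x\cdot\xi}$ yields $|I_j(x)| \lesssim 2^{jd}(2^j|x|)^{-M}$ for every $M$, while trivially $|I_j(x)|\lesssim 2^{jd}$. For the low-frequency piece $I_0$, I estimate directly: $m\phi_0$ vanishes like $|\xi|^{s-N}$ near $0$ and is $C^\infty$ there in a suitable weak sense — more precisely, split $\phi_0 = \phi_0\psi(|x|\xi) + \phi_0(1-\psi(|x|\xi))$ where $\psi$ cuts off to $|\xi|\lesssim |x|^{-1}$; on the inner part use $|m|\lesssim |\xi|^{\min(0,s-N)}$... actually $|m|\lesssim 1$, giving a contribution $\lesssim |x|^{-d}$, which is too weak, so instead I interpolate: differentiating $m\phi_0$ costs $|\xi|^{s-N-|\alpha|}$ which is integrable near $0$ as long as $|\alpha| < s - N + d$; choosing $|\alpha|$ with $d < |\alpha| < s-N+d$ — possible since $s - N > 0$ — and integrating by parts $|\alpha|$ times gives $|I_0(x)|\lesssim |x|^{-|\alpha|}\int |\xi|^{s-N-|\alpha|}\,d\xi \lesssim |x|^{-|\alpha|}$; taking $|\alpha|$ close to $s-N+d$ and then summing the finitely many $I_j$, $1\leq j\leq j_0$, each bounded by $2^{jd}(2^j|x|)^{-M}$ with $M$ large, and optimizing, one reaches the uniform bound $|m(D)(x)|\lesssim |x|^{-s-d}$ for $|x|\geq 1$. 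The main obstacle I anticipate is precisely this bookkeeping near $\xi=0$: one must track how the order-$(s-N)$ vanishing of $m$ upgrades the generic $|x|^{-d}$ decay to $|x|^{-s-d}$, which requires integrating by parts \emph{exactly} enough times to exploit the singularity of the derivatives without making the $\xi$-integral diverge — this is where the hypothesis $M_\alpha = s-N$ (rather than $0$) for $|\alpha|\geq 1$ is used, and where the constraint $N < s/2$ from \eqref{Ncond} ultimately enters in the application to guarantee $s - N > 0$.
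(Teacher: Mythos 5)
Your high-level instinct is correct (the claim is trivial for $|x|\le1$, and for $|x|\ge1$ one must extract the decay from the vanishing rate of $\partial^\alpha m$ near $\xi=0$ by a split at scale $|x|^{-1}$), and you also rightly notice that the whole problem lives near $\xi=0$. But there is a genuine gap in the way you close the estimate for what you call $I_0$: you reduce it to ``integrate by parts $|\alpha|$ times and choose $|\alpha|$ close to $s-N+d$,'' which is not available because the number of integrations by parts must be an integer. The best you actually obtain is $|I_0(x)|\lesssim|x|^{-\lfloor s-N+d\rfloor}$, which (a) loses a power $|x|^{N}$ relative to the target $|x|^{-s-d}$ even if one could pass to the real endpoint, and (b) loses a further fractional power by rounding down. (Incidentally, I would read the hypothesis as $M_\alpha=s-|\alpha|$ for $|\alpha|\ge1$; as literally printed, $M_\alpha=s-N$ with $s-N>0$ independent of $|\alpha|$ would force $m$ to be $C^\infty$ at $0$, and your own calculation only produces $|x|^{-(s-N+d)}$ rather than $|x|^{-(s+d)}$, which shows the printed exponent cannot be what is meant.) Also, the Littlewood–Paley sum over $j\ge1$ (scales $|\xi|\sim 2^j\gtrsim1$) is a red herring: on those annuli $m$ is $C^\infty$ with compact support, so each $I_j$ already decays like $|x|^{-M}$ for every $M$; the decomposition you want is inward toward $\xi=0$, not outward.

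The paper closes the gap by a small but essential reordering: first integrate by parts \emph{once} in the full integral, producing a global factor $|x|^{-1}$ and replacing $m$ by $D_\xi m$, which now vanishes like $|\xi|^{s-1}$; then split at scale $\delta$. The inner piece is bounded trivially by $|x|^{-1}\int_{|\xi|\le2\delta}|\xi|^{s-1}d\xi\lesssim|x|^{-1}\delta^{d+s-1}$, and the outer piece, after $M$ further integrations by parts with $M>s+d-1$ an integer, is $\lesssim|x|^{-M-1}\delta^{d+s-1-M}$. Taking $\delta=|x|^{-1}$ makes both pieces equal to $|x|^{-s-d}$ on the nose. The one preliminary integration by parts is exactly what lets the continuous parameter $\delta$ interpolate between the integer endpoints and hit the fractional exponent $s+d$; without it, as in your sketch, the integer constraint on the number of integrations by parts leaves a fractional deficit, and the bound needed for the application (summability of $(1+|x|)^{2N-d-s}$ under $N<s/2$) is lost.
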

\begin{proof}
Since $m$ is compactly supported, we may assume $|x|\geq 1$. Let $\chi \in C_c^{\infty}(\re)$ satisfying $\chi(t)=1$ on $|t|\leq 1$ and $\chi(t)=0$ on $|t|\geq 2$. Set $\bar{\chi}=1-\chi$. For $\d>0$, by integrating by parts, we have
\begin{align*}
m(D)(x)=&\frac{x}{|x|^2}\cdot \int_{\re^d}e^{2\pi ix\cdot \x}(-D_{\x}m(\x))d\x\\
=&\frac{x}{|x|^2}\cdot \int_{\re^d}e^{2\pi ix\cdot \x}(\chi(|\x|/\d)+\bar{\chi}(|\x|/\d))(-D_{\x}m(\x))d\x\\
=:&m_1(x)+m_2(x).
\end{align*}
We simply obtain 
\begin{align*}
|m_1(x)|\leq C|x|^{-1}\int_{|\x|\leq 2\d}|\x|^{s-1}d\x\leq C|x|^{-1}\d^{d+s-1}.
\end{align*}
For $M\geq s+d+2$, by integrating by parts, we have
\begin{align*}
|m_2(x)|\leq& C|x|^{-M-1}\sum_{|\a|\leq M}\int_{\re^d}|D_{\x}^{\a} (\bar{\chi}(|\x|/\d)D_{\x}m(\x))|d\x\\
\leq&C|x|^{-M-1}\d^{d+s-1-M}.
\end{align*}
We set $\d=|x|^{-1}$ and conclude $|m(D)(x)|\leq C|x|^{-d-s}$.
\end{proof}

\begin{lem}
Let $m$ be as in Lemma \ref{kersin} and $1<p<\infty$. Moreover, let $0\leq N<s/2$. Then we have
\begin{align*}
&\|\m(x)m(D)\m(x)^{-1}\|_{B(L^{p}(\re^d))}\leq C_{N,m,p}
\end{align*}
for $\m\in \{\m_{N,\c}, \m_{N,\c}^{-1}\}$, where $C_{N,m,p}$ and $C_{N,m}$ are independent of $0<\c\leq 1$ and depends only on $d$, $N$ and $C$ in Lemma \ref{kersin}.
\end{lem}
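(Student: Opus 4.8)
The plan is to reduce the claim to a pointwise kernel estimate followed by Young's inequality. Since $m\in C_c(\re^d)\subset L^1(\re^d)$, the Fourier multiplier $m(D)$ is convolution with the bounded continuous function $m(D)(z)=\int_{\re^d}e^{2\pi iz\cdot\x}m(\x)\,d\x$, and (by density from $\mathcal{S}(\re^d)$, or directly from $m\in L^1$) $m(D)f=m(D)(\cdot)\ast f$ on $L^p(\re^d)$. Consequently, for $\m\in\{\m_{N,\c},\m_{N,\c}^{-1}\}$ the operator $\m(x)m(D)\m(x)^{-1}$ has integral kernel $K(x,y)=\m(x)\m(y)^{-1}\,m(D)(x-y)$, and it is enough to dominate $|K(x,y)|$ by a convolution kernel lying in $L^1(\re^d)$.

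For this I would combine the two estimates already at hand. Lemma \ref{kersin} gives $|m(D)(z)|\le C(1+|z|)^{-s-d}$ with $C$ depending only on $d$, $s$ and the constant in Lemma \ref{kersin}; and Lemma \ref{mulemma} $(ii)$ gives $\m(x)\m(y)^{-1}\le C_N(1+|x-y|^2)^{N}\le C_N'(1+|x-y|)^{2N}$ with $C_N$, $C_N'$ independent of $0<\c\le1$. Multiplying, $|K(x,y)|\le C''(1+|x-y|)^{2N-s-d}=:\Phi(x-y)$. The hypothesis $N<s/2$ now enters decisively: it forces $2N-s-d<-d$, so $\Phi\in L^1(\re^d)$ with $\|\Phi\|_{L^1(\re^d)}$ depending only on $d,N,s$ and $C$, hence uniform in $\c$. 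Then $|\m(x)m(D)\m(x)^{-1}f(x)|\le(\Phi\ast|f|)(x)$, and Young's inequality yields $\|\m(x)m(D)\m(x)^{-1}\|_{B(L^p(\re^d))}\le\|\Phi\|_{L^1(\re^d)}$; this proves the claim with $C_{N,m,p}=\|\Phi\|_{L^1(\re^d)}$, which (as the statement asserts) is in fact independent of $p$ and the argument works for all $1\le p\le\infty$.

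I do not anticipate a genuine obstacle: the substantive content has already been absorbed into Lemma \ref{kersin}. The only points requiring a little care are the justification of the convolution representation of $m(D)$ on $L^p(\re^d)$ (immediate since $m\in L^1(\re^d)$) and the bookkeeping of constants, i.e.\ checking that every constant is independent of $0<\c\le1$ — which is precisely the uniform form in which Lemma \ref{mulemma} $(ii)$ is stated. One could instead phrase the $L^p$-bound via Schur's test, but since the dominating kernel $\Phi$ is of convolution type, Young's inequality is the cleanest route and automatically produces a $p$-independent constant.
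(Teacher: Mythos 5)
Your proof is correct and matches the paper's argument essentially line for line: bound the kernel $\m(x)m(D)(x-y)\m(y)^{-1}$ by $C(1+|x-y|)^{2N-s-d}$ via Lemma \ref{mulemma} $(ii)$ and Lemma \ref{kersin}, observe that $N<s/2$ makes this an $L^1$ convolution kernel, and conclude by Young's inequality. The only cosmetic difference is that you make explicit the $p$-independence of the resulting constant, which the paper leaves implicit in its naming.
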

\begin{proof}
We note that the integral kernel of $\m(x)m(D)\m(x)^{-1}$ is $\m(x)m(D)(x-y)\m(y)^{-1}$ and satisfies
\begin{align*}
|\m(x)m(D)(x-y)\m(y)^{-1}|\leq C(1+|x-y|)^{2N-d-s}
\end{align*}
with $C>0$ independent of $\c>0$. Here we use Lemma \ref{mulemma} $(ii)$ and Lemma \ref{kersin}. We note $2N-s<0$ by the condition $(\ref{Ncond})$. Thus we have $(1+|x|)^{2N-d-s}\in L^1(\re^d)$. By the Young inequality, we obtain the desired result.
\end{proof}
\begin{rem}
Replacing the Young inequality by the O'neil theorem (the Young inequality in the Lorentz spaces), we can relax the condition $(\ref{Ncond})$ as $2N\leq s$.
\end{rem}

We return to the proof of $(\ref{Carpr})$ with $\g=\chi_0$. We take $\chi\in C^{\infty}(\re^d)$ such that $\chi=1$ on $\supp \chi_0$. We learn
\begin{align*}
\L_{s/2}\m_{N,\c}=&(\L_{s/2}\m_{N,\c}  \L_{-s/2}\m_{N,\c}^{-1})\times(\m_{N,\c}\L_{s/2}\chi(D)(T(D)-\l)^{-1}\L_{s/2}\m_{N,\c}^{-1})\\
&\times(\m_{N,\c}\L_{-s/2}\m_{N,\c}^{-1}\L_{s/2})
\times\L_{-s/2} \m_{N,\c} \chi_0(D)(T(D)-\l).
\end{align*}
We set $m(D)=\m_{N,\c}\L_{s/2}\chi(D)(T(D)-\l)^{-1}\L_{s/2}\m_{N,\c}^{-1}$, then $m$ satisfies the assumption of Lemma \ref{kersin}. Thus the inclusions $(\ref{Sincusion})$, Corollary \ref{mucor} and Lemma \ref{kersin} imply $(\ref{Carpr})$ with $\g=\chi_0$. This complete the proof of Proposition \ref{Car} with $u\in \mathcal{S}(\re^n)$.

In order to remove the condition $u\in \mathcal{S}(\re^n)$, we may use the Friedrichs modifier and a cut-off function as in \cite[Proof of Theorem 1.2]{IS}. We omit the detail.
\end{proof}

The next lemma implies that the potential is "admissible".

\begin{lem}\label{adlem}
Suppose $V\in L^{p}(\re^d)$ with $d/s\leq p\leq (d+1)/2$ for $2d/(d+1)\leq s< d$ and $V\in L^{(d+1)/2}(\re^d)\cap L^{d/s}(\re^d)$ for $0<s<2d/(d+1)$. Then we have $V\in B(X_s^{*}, X_s)$. Moreover, for each $\e>0$ and $N\geq 0$ there exists $A_{N,\e}, R_{N,\e}\geq 1$ such that for $\c\in (0,1]$, we have
\begin{align}\label{adlemin}
\|\m_{N,\c}Vu\|_{X_s}\leq \e \|\m_{N,\c}u\|_{X_s^*}+A_{N,\e}\|u\|_{L^2(|x|\leq R_{N,\e})}.
\end{align}
\end{lem}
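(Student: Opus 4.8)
The plan is to first establish $V\in B(X_s^*,X_s)$ by a factorization through $L^2$, and then to obtain $(\ref{adlemin})$ by the standard splitting of $V$ into a bounded, compactly supported part and a part of small norm.

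For the boundedness I would write $V=|V|^{1/2}\cdot(\sgn V)|V|^{1/2}$ and show $|V|^{1/2}\in B(X_s^*,L^2)\cap B(L^2,X_s)$, which suffices since $(\sgn V)|V|^{1/2}$ has the same modulus as $|V|^{1/2}$. When $0<s<2d/(d+1)$ one has $|V|^{1/2}\in L^{d+1}\cap L^{2d/s}$ by hypothesis; when $2d/(d+1)\le s<d$ the decomposition $|V|^{1/2}=|V|^{1/2}\chi_{\{|V|>1\}}+|V|^{1/2}\chi_{\{|V|\le1\}}$ exhibits $|V|^{1/2}$ as a sum of an $L^{2d/s}$ function and an $L^{d+1}$ function, because $d/s\le p\le(d+1)/2$. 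Then H\"older's inequality shows that multiplication by an $L^{d+1}$ function maps $L^{p_d^*}\to L^2$ and $L^2\to L^{p_d}$, while multiplication by an $L^{2d/s}$ function maps $L^{2d/(d-s)}\to L^2$ and $L^2\to L^{2d/(d+s)}$. Combining this with the inclusions $X_s^*\hookrightarrow L^{p_d^*}\cap L^{2d/(d-s)}$ resp.\ $L^{p_d^*}+L^{2d/(d-s)}$ and, dually, $L^{p_d}+L^{2d/(d+s)}$ resp.\ $L^{p_d}\cap L^{2d/(d+s)}\hookrightarrow X_s$ (the first alternative in the range $2d/(d+1)\le s<d$, from the Sobolev embedding theorem applied to $W^{l_d,p_d^*}$ and its dual; the second in the range $0<s<2d/(d+1)$, directly from the definitions of $X_s$ and $X_s^*$), one gets $|V|^{1/2}\in B(X_s^*,L^2)\cap B(L^2,X_s)$, hence $V\in B(X_s^*,X_s)$. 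The components $\L_{s/2}\mathcal{B}$ of $X_s$ and $\L_{-s/2}\mathcal{B}^*$ of $X_s^*$ play no role here, as they only enlarge $X_s$ and shrink $X_s^*$. Keeping track of constants, the same argument gives $\|V\|_{B(X_s^*,X_s)}\le C\big(\|V\chi_{\{|V|>1\}}\|_{L^{d/s}}^{\theta_1}+\|V\chi_{\{|V|\le1\}}\|_{L^{(d+1)/2}}^{\theta_2}\big)$ for suitable $\theta_1,\theta_2>0$ (with $\|V\|_{L^{d/s}}$ and $\|V\|_{L^{(d+1)/2}}$ in place of the truncated norms when $0<s<2d/(d+1)$), which I would use in the next step.

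For $(\ref{adlemin})$, given $\e>0$ and $N\ge0$, I would use dominated convergence to choose $R=R_{N,\e}\ge1$ and then $A\ge1$ so that, with $V_b$ the restriction of $V$ to $\{|x|\le R,\,|V|\le A\}$ and $V_s=V-V_b$, the relevant Lebesgue norm of $V_s$ is small enough that the estimate above gives $\|V_s\|_{B(X_s^*,X_s)}\le\e$. Since $\m_{N,\c}$ is a multiplication operator, it commutes with $V_s$, so
\begin{align*}
\|\m_{N,\c}V_su\|_{X_s}=\|V_s(\m_{N,\c}u)\|_{X_s}\le\|V_s\|_{B(X_s^*,X_s)}\|\m_{N,\c}u\|_{X_s^*}\le\e\|\m_{N,\c}u\|_{X_s^*}.
\end{align*}
For the term with $V_b$ I would use $\m_{N,\c}(x)\le(1+R^2)^N$ on $\{|x|\le R\}$ uniformly in $\c\in(0,1]$ and $|V_b|\le A$, so that $|\m_{N,\c}V_bu|\le(1+R^2)^NA\,\chi_{\{|x|\le R\}}|u|$ pointwise; estimating in $L^{p_d}$ (and, when $0<s<2d/(d+1)$, also in $L^{2d/(d+s)}$), which are lattice ideals, then applying H\"older on the ball $\{|x|\le R\}$ (using $p_d\le2$ and $2d/(d+s)\le2$) and the inclusion $L^{p_d}\cap L^{2d/(d+s)}\hookrightarrow X_s$ gives
\begin{align*}
\|\m_{N,\c}V_bu\|_{X_s}\le C(1+R^2)^NA\,|B_R|^{\frac1{p_d}-\frac12}\|u\|_{L^2(|x|\le R)}=:A_{N,\e}\|u\|_{L^2(|x|\le R_{N,\e})},
\end{align*}
with $A_{N,\e}$ independent of $\c$. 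Adding the two displays yields $(\ref{adlemin})$.

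The main obstacle I anticipate is the bookkeeping in the first step: one has to check that the chain of H\"older and Sobolev estimates closes up for every admissible pair $(s,p)$ — in particular in the regime $0<s<2d/(d+1)$, where $d/s>(d+1)/2$ and the sum/intersection structure of $X_s$ and $X_s^*$ must be exploited — and that the single smallness condition on $V_s$ obtained from the value-truncation splitting really controls $\|V_s\|_{B(X_s^*,X_s)}$.
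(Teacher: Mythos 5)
Your proof is correct, and in one place it is actually more careful than the paper's own argument. For the boundedness $V\in B(X_s^*,X_s)$, the paper works directly with $V$: by H\"older, $\|Vu\|_{L^{q_p}}\le\|V\|_{L^p}\|u\|_{L^{q_p^*}}$ with $q_p=2p/(p+1)$, and then invokes the same Sobolev embeddings $W^{l_d,p_d^*}\hookrightarrow L^{q_p^*}$, $L^{q_p}\hookrightarrow W^{-l_d,p_d}$ that you use. Your factorization $V=|V|^{1/2}(\sgn V)|V|^{1/2}$ through $L^2$, together with the $L^{2d/s}/L^{d+1}$ splitting of $|V|^{1/2}$, is a reorganization of the same H\"older--Sobolev bookkeeping; both reduce to the same exponent identities, so this part is essentially equivalent (modulo the harmless imprecision in your claimed norm bound, which omits the cross term $\|a\|\|b\|$ that a product of sums produces, though this does not affect the smallness conclusion). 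The real divergence is in the second step. The paper truncates only in space, $V_R=V\chi(x/R)$, and then bounds $\|\m_{N,\c}V_Ru\|_{X_s}\le\|\m_{N,\c}V_Ru\|_{\mathcal B}$; but to conclude $\|\m_{N,\c}V_Ru\|_{\mathcal B}\lesssim\|u\|_{L^2(|x|\le R)}$ one needs $V_R$ to map $L^2(|x|\le R)$ into $L^2$, which requires $V$ bounded near the support of $\chi(\cdot/R)$ — something the lemma's hypotheses do not give. In the paper's application (Theorem~\ref{diracth}(iv)) the potential is additionally assumed to be in $L^\infty$, so the gap is harmless there, but as a proof of the stated lemma it is incomplete. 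Your two-parameter truncation $V_b=V\chi_{\{|x|\le R,\,|V|\le A\}}$ makes $V_b$ genuinely bounded and compactly supported, so the pointwise bound $|\m_{N,\c}V_bu|\le(1+R^2)^NA\chi_{\{|x|\le R\}}|u|$ followed by H\"older on the ball closes the argument for all $V$ satisfying the lemma's hypotheses, with constants uniform in $\c\in(0,1]$. In short: the first half mirrors the paper; the second half differs by the extra value-truncation, and that extra step repairs an implicit reliance on $V\in L^\infty$ in the paper's version.
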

\begin{proof}
First, we prove
\begin{align}\label{adin}
\|Vu\|_{X_s}\leq \|V\|_{Y_s} \|u\|_{X_s^*},
\end{align}
where $Y_s\in \{L^p(\re^d)\}_{d/s\leq p\leq (d+1)/2}$ for $2d/(d+1)\leq s< d$ and $Y_s=L^{(d+1)/2}(\re^d)\cap L^{d/s}(\re^d)$.
By the Sobolev embedding theorem, we have
\begin{align*}
W^{l_d,p_d^*}\hookrightarrow L^{q^*}(\re^d),\,\, L^{q}(\re^d)\hookrightarrow W^{-l_d,p_d}
\end{align*}
for $2d/(d+s)\leq q\leq p_d$. For $2d/(d+1)\leq s< d$ and $d/s\leq p\leq (d+1)/2$, we set $q_p=2p/(p+1)$. We note $2d/(d+s)\leq q_p\leq p_d$.
By the H\"older inequality, we have
\begin{align*}
\|Vu\|_{L^{q_p}(\re^d)}\leq \|V\|_{L^p(\re^d)} \|u\|_{L^{q_p^*}(\re^d)}.
\end{align*}
We use $X_s^*\hookrightarrow W^{l_d,p_d^*}$ and $W^{-l_d,p_d}\hookrightarrow X_s$ and conclude $V\in B(X_s^*, X_s)$ and $(\ref{adin})$ for $2d/(d+1)\leq s< d$. In order to prove $(\ref{adin})$ with $0<s<2d/(d+1)$, it suffices to prove
\begin{align*}
\|Vu\|_{L^q(\re^d)}\leq \|V\|_{Y_s}\|u\|_{L^r(\re^d)},
\end{align*}
where $q\in\{p_d, 2d/(d+s)\}$ and $r\in \{p_d^*, 2d/(d-s)\}$. This inequality follows from the fact $V\in Y_s= L^{(d+1)/2}(\re^d)\cap L^{d/s}(\re^d)$ and the complex interpolation.

Take $\chi\in C_c^{\infty}(\re^d)$ such that $\chi=1$ on $|x|\leq 1/2$ and $\chi=0$ on $|x|\geq 1$. For $R\geq 1$, we set $V_R=V\chi(x/R)$. Then we use the inclusion $\mathcal{B}\hookrightarrow X_s$ and have
\begin{align*}
\|\m_{N,\c}Vu\|_{X_s}\leq \|V-V_R\|_{Y_s}\|\m_{N,\c}u\|_{X_s^*}+\|\m_{N,\c}V_{R}u\|_{X_s}\\
\leq \|V-V_R\|_{Y_s}\|\m_{N,\c}u\|_{X_s^*}+\|\m_{N,\c}V_{R}u\|_{\mathcal{B}}.
\end{align*}
For each $\e>0$, we take $R>0$ large enough such $\|V-V_R\|_{Y_s}<\e$ and we obtain $(\ref{adlemin})$.

\end{proof}

\begin{proof}[Proof of Theorem \ref{diracth} $(iv)$]
We recall $H=T(D)+V$.
Suppose that $\s_{pp}(H)\setminus \{0\}$ is not discrete in $\re\setminus \{0\}$. Then there exist an orthonormal system $\{u_j\}_{j=1}^{\infty}\subset L^2(\re^d)$, $\d\geq 1$ and $\{\l_j\}_{j=1}^{\infty}\subset \{\l\in \re\mid \d\leq |\l|\leq \d^{-1}\}$ such that $Hu_j=\l_ju_j$. We note $u_j\in L^2(\re^d)\subset \mathcal{B}^*_0$. Let $N\geq 0$ satisfying $(\ref{Ncond})$. Applying Proposition \ref{Car} with $u_j$ and Lemma \ref{adlem} with small $\e>0$, we have
\begin{align*}
\|\m_{N,\c}u_j\|_{X_s^*}\leq C_{N,\e}\|u_j\|_{L^2(\re^d)}
\end{align*}
with $C_{N,\e}$ independent of $\c\in(0,1]$. The inclusion $\L_{-s/2}(1+|x|)^{1/2+\e_1} L^2(\re^d)\hookrightarrow X_s^*$ for $\e_1>0$ implies
\begin{align*}
\|(1+|x|)^{-1/2-\e_1} \L_{s/2}\m_{N,\c}u_j\|_{L^2(\re^d)}\leq C_{N,\e}\|u_j\|_{L^2(\re^d)}.
\end{align*}
Taking $\c\to 0$, we have
\begin{align}\label{discin1}
\|(1+|x|)^{-1/2-\e_1} \L_{s/2}(1+|x|^2)^Nu_j\|_{L^2(\re^d)}\leq C_{N,\e}\|u_j\|_{L^2(\re^d)}=C_{N,\e}.
\end{align}
We take $\e_1$ small enough and $N\geq 0$ satisfying $(\ref{Ncond})$ and $2N>1/2+\e_1$ when $T(D)=(-\Delta)^{s/2}$ with $2s\notin \mathbb{N}$. Then $(\ref{discin1})$ implies that $u_j$ is bounded in $(1+|x|)^{1/2+\e_1-2N}\L_{-s/2} L^2(\re^d)$. Since the inclusion $(1+|x|)^{1/2+\e_1-2N}\L_{-s/2} L^2(\re^d)\hookrightarrow L^2(\re^d)$ is compact, there exists a subsequence $\{u_{j_k}\}_k$ such that $u_{j_k}\to u$ in $L^2(\re^d)$ for some $u\in L^2(\re^d)$. On the other hand, since $u_j$ converges to $0$ in the weak topology of $L^2(\re^d)$, then we have $u=0$. This contradicts to $\|u_j\|_{L^2(\re^d)}=1$.

The same argument implies that the each eigenspace associated with eigenvalue $\l\in \re\setminus \{0\}$ is finite dimensional.

\end{proof}

\subsection{Discrete Schr\"odinger operator}\label{discapp}
In this subsection, we consider the case of $X=\ze$ and consider the discrete Schr\"odinger operators.  

\begin{proof}[Proof of Theorem \ref{discth}]
Part $(ii)$ directly follows from the following lemma.
\begin{lem}
Let $d\geq 4$ and a signature $\pm$. Then maps $z\in \mathbb{C}_{\pm}\setminus\re\mapsto R_0^{\pm}(z)$ are H\"older continuous in $B(L^{p}(\ze^d),L^{p^*}(\ze^d))$ for $1\leq p<3_*$, where $3_*=2d/(d+3)$.
\end{lem}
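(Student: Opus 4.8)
The plan is to split the spectral parameter $z$ into three regimes --- far from $\s(H_0)=[0,4d]$, near a regular value of $h_0$, and near one of the critical values $4k$ ($0\le k\le d$) --- prove the H\"older bound in each, and then patch by a finite partition of the energy axis together with the triangle inequality along the segment joining $z$ to $w$; since all the bounds are uniform up to the real axis, the map extends H\"older continuously to $\mathbb{C}_{\pm}$ with the boundary values $R_0^{\pm}(\l\pm i0)$ of \cite{TT}. First I would dispose of the easy part: if $\dist(z,[0,4d])\ge c>0$ then $(h_0(\x)-z)^{-1}$ and its $z$-derivative $(h_0(\x)-z)^{-2}$ are smooth on $\T^d$ with all $\x$-derivatives bounded uniformly in such $z$, so $R_0(z)$ has a rapidly decreasing convolution kernel and $z\mapsto R_0(z)$ is locally Lipschitz into $B(L^1(\ze^d),L^{\infty}(\ze^d))$, hence into $B(L^p,L^{p^*})$. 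Thus it remains to treat $z,w$ in a fixed bounded neighbourhood of $[0,4d]$.

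Near the regular values I would cover $[0,4d]\setminus\bigcup_{k}(4k-\e,4k+\e)$ by finitely many compact intervals $I$ with $I\cap\{4k\}_{k=0}^{d}=\emptyset$. For each such $I$ the set $h_0^{-1}(I)\subset\T^d$ is compact with $\nabla h_0\ne 0$, so I pick $\chi\in C^{\infty}(\T^d)$ equal to $1$ near $h_0^{-1}(I)$ and supported in $\{\nabla h_0\ne 0\}$, and split $R_0(z)=\chi(D)R_0(z)+(1-\chi)(D)R_0(z)$. The symbol of the second summand is smooth in $\x$ uniformly for $\Re z\in I$ (it vanishes near the level sets $M_{\l}$, $\l\in I$), so that term is rapidly decreasing and Lipschitz in $z$ into $B(L^1,L^{\infty})$. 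For the first term, the uniform decay of the induced surface measure established in \cite{TT} furnishes $(\ref{surmes})$ on $\supp\chi$ with a rate $k$ independent of $\l\in I$, so Theorem \ref{mainprop}$(ii)$ (which holds for $X=\ze$) applies and gives $\sup_{|z-w|\le 1}|z-w|^{-\b_{\d}}\|\chi(D)(R_0^{\pm}(z)-R_0^{\pm}(w))\|_{B(L^p,L^{p^*})}<\infty$ whenever $(1/p,1/p^*)\in S_{k-\d}$.

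The delicate regime is $z$ near a critical value $4k$. The critical points of $h_0$ over a small neighbourhood of $4k$ are the finitely many $\x^{*}\in\{0,1/2\}^d$ with exactly $k$ coordinates equal to $1/2$, and each is nondegenerate: the Hessian of $h_0$ there is a nondegenerate quadratic form of signature $(d-k,k)$. Off these points $\nabla h_0\ne 0$ and one argues as in the previous paragraph; near each of them a Morse change of variables reduces $\chi(\x)(h_0(\x)-z)^{-1}$ to the model multiplier $\psi(\x)(Q(\x)-z)^{-1}$ with $Q$ a fixed nondegenerate quadratic form and $\psi\in C_c^{\infty}$. For this model I would use the pointwise estimate $|K_{z}(x)|\lesssim(1+|x|)^{-k}$ on the kernel of $\psi(D)R_0^{\pm}(z)$ and $|K_{z,w}(x)|\lesssim|z-w|^{\d}(1+|x|)^{-k+\d}$ on the kernel of $\psi(D)(R_0^{\pm}(z)-R_0^{\pm}(w))$ --- the latter obtained by interpolating the former with the crude bound $O(|z-w|)\,(1+|x|)^{-k+1}$ coming from the resolvent identity $R_0(z)-R_0(w)=(z-w)R_0(z)R_0(w)$ --- together with the corresponding $T^{*}T$-type estimates (Assumptions \ref{assc} and \ref{assd}), proved as in Lemma \ref{unilem2}; the needed decay of the oscillatory integrals defining these kernels near a Morse cone is exactly the content of the analysis in \cite{TT}. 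Feeding the kernel bounds into Proposition \ref{abpr2} (resp. Proposition \ref{away}) with exponent $k$ for $K_z$ and with $k-\d$ for $K_{z,w}$ then yields $\sup_{|z-w|\le1}|z-w|^{-\b_{\d}}\|\psi(D)(R_0^{\pm}(z)-R_0^{\pm}(w))\|_{B(L^p,L^{p^*})}<\infty$ in the asserted range $1\le p<2d/(d+3)$, the strict inequality being forced by the loss of $\d$ in the indefinite cones $1\le k\le d-1$ (the definite critical values $k=0,d$ behave like $-\Delta$ at zero energy and are easier). Assembling the finitely many pieces gives the lemma.

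The main obstacle is precisely this last step: at a critical value the level set $\{h_0=\l\}$ degenerates to a Morse cone, so the naive estimate through $R_0(z)R_0(w)$ is useless because the product of two limiting resolvents is not uniformly $L^p\to L^{p^*}$-bounded, and the gain $|z-w|^{\d}$ must be extracted by interpolating the uniform kernel bound against the resolvent-identity bound at the level of the oscillatory kernels --- the same mechanism that underlies Theorem \ref{mainprop}$(ii)$ and the estimates of \cite{TT}. Everything else is bookkeeping of exponents and a finite patching.
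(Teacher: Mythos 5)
Your proposal takes a genuinely different route from the paper, and as written it has a gap at precisely the step you flag as the main obstacle. The paper's proof avoids the whole spectral-parameter decomposition: following \cite[Lemma 4.7]{RoS}, it writes
\begin{align*}
R_0^{+}(z)-R_0^{+}(z')=i\int_{0}^{\infty}(e^{itz}-e^{itz'})\,e^{it\Delta_d}\,dt,
\end{align*}
combines the dispersive estimate $\|e^{it\Delta_d}\|_{B(L^p(\ze^d),L^{p^*}(\ze^d))}\leq C_p\jap{t}^{-\frac{d}{3}(\frac{2}{p}-1)}$ of \cite{SK} with the trivial bound $|e^{itz}-e^{itz'}|\leq 2^{1-\d}|t|^{\d}|z-z'|^{\d}$ (valid for $t\geq 0$ and $z,z'$ in the closed upper half-plane), and observes that the resulting $t$-integral converges exactly when $p<3_{*,\d}=2/(3\d/d+(d+3)/d)$. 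Since any $p<3_*$ satisfies $p<3_{*,\d}$ for some small $\d>0$, this gives the H\"older bound in one stroke, with no partition of unity, no uniform surface-measure decay, and in particular no separate treatment of the critical values $4k$. The dispersive route is intrinsically more robust: the global time decay of $e^{it\Delta_d}$ already encodes all the degeneracies of the level sets of $h_0$, so the argument never has to resolve them.

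Taken on its own terms, your sketch has a genuine gap at the critical energies. The mechanism you propose to produce the $|z-w|^{\d}$ gain there --- interpolating the kernel bound $|K_z(x)|\leq C(1+|x|)^{-k}$ against a Lipschitz-type bound $O(|z-w|)(1+|x|)^{-k+1}$ that you attribute to the resolvent identity $R_0(z)-R_0(w)=(z-w)R_0(z)R_0(w)$ --- does not work, because the kernel of $R_0(z)R_0(w)$ is the convolution $K_z\ast K_w$, and for the relevant exponent $k<d/2$ (e.g.\ $k=(d-3)/3$, so $k=1/3$ when $d=4$) that convolution is not even absolutely convergent, let alone $O((1+|x|)^{-k+1})$; equivalently, the symbol of $R_0(z)R_0(w)$ behaves like $(h_0(\x)-z)^{-2}$ near the characteristic set, whose inverse Fourier transform has far worse decay than that of $(h_0(\x)-z)^{-1}$. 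In the paper's proof of Theorem \ref{mainprop}(ii) the analogous estimate (Lemma \ref{unilem1}) is obtained not from the resolvent identity but from the fibered representation $T(\x)-\l=e(\x,\l)(\x_d-h_{\l}(\x'))$ supplied by the implicit function theorem, which is exactly what is unavailable at a Morse critical point of $h_0$. So the hard step in your plan remains open, whereas the dispersive proof sidesteps it entirely. (A secondary caveat: even at regular energies the Gaussian curvature of $\{h_0=\l\}\subset\T^d$ can vanish, so the rate $k$ in $(\ref{surmes})$ cannot be read off from $\nabla h_0\neq 0$ alone as ``independent of $\l\in I$''; pinning it down is itself part of the content of \cite{TT}.)
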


\begin{proof}
We follow the argument in \cite[Lemma 4.7]{RoS}. We prove the lemma in the case of $+$ only. The case of $-$ is similarly proved. For $1\leq p<3_{*}$, there exists $0<\d\leq 1$ such that $1\leq p<3_{*,\d}$, where
\begin{align*}
3_{*,\d}=\frac{2}{3\d/d+(3+d)/d}.
\end{align*}
We use the following dispersive estimate (\cite{SK}):
\begin{align}\label{disp}
\|e^{it\Delta_d}\|_{B(L^p(\ze^d), L^{p^*}(\ze^d))}\leq C_p\jap{t}^{-\frac{d}{3}(\frac{2}{p}-1)},\,\, 1\leq p\leq 2.
\end{align}
Moreover,
\begin{align}\label{Tay}
|e^{itz}-e^{itz'}|\leq 2^{1-\d}|t|^{\d}|z-z'|^{\d}
\end{align}
holds for $t\geq 0$ and $z,z'\in \mathbb{C}_{+}$ since $|e^{itz}-e^{itz'}|\leq 2$ and $|e^{itz}-e^{itz'}|\leq |t||z-z'|$.
By $(\ref{disp})$ and $(\ref{Tay})$, we have
\begin{align*}
&\|R_0^{+}(z)-R_0^{+}(z')\|_{B(L^{p}(\re^d), L^{p^*}(\re^d))}\\
&=\left\|\int_{0}^{\infty}(e^{itz}-e^{itz'})e^{it\Delta_d}dt\right\|_{B(L^{p}(\re^d), L^{p^*}(\re^d))}\\
&\leq C_p2^{1-\d}|z-z'|^{\d}\int_0^{\infty}|t|^{\d}\jap{t}^{-\frac{d}{3}(\frac{2}{p}-1)}dt<\infty
\end{align*}
for $1\leq p<3_{*,\d}$. This completes the proof.
\end{proof}

Now we prove part $(i)$. The above lemma implies that 
\begin{align}\label{recont}
\lim_{\e\to 0, \e>0}\|R_{0}^{\pm}(\l\pm i\e)-R_0^{\pm}(\l\pm i0)\|_{B(L^p(\ze^d), L^{p^*}(\ze^d))}=0,\,\,\l\in \re,\,\, 1\leq p<3_*,
\end{align}
where we recall $R_0^{\pm}(\l\pm i0)$ are Fourier multipliers of the distributions $(h_0(\x)-(\l\pm i0))^{-1}$.
We also use the uniform bounds (\cite[Proposition 3.3]{TT}):
\begin{align}\label{endbound}
\sup_{z\in \mathbb{C}_{\pm}\setminus \re}\|R_0^{\pm}(z)\|_{B(L^{3_*}(\ze^d), L^{3^*}(\ze^d))}<\infty,
\end{align}
where $3^*=2d/(d-3)$. By $(\ref{recont})$ and $(\ref{endbound})$, taking a limiting argument, we have
\begin{align*}
\sup_{z\in \mathbb{C}_{\pm}}\|R_0^{\pm}(z)\|_{B(L^{3_*}(\ze^d), L^{3^*}(\ze^d))}<\infty.
\end{align*}
This proves part $(i)$.

Note that part $(iii)$ with $V\in L^{p}(\ze^d)$ for $1\leq p<d/3$ follows from part $(ii)$ and the H\"older inequality. Part $(iii)$ with $V\in L^{d/3}(\ze^d)$ follows from the following lemma.

\begin{lem}\label{discear}
Let $d\geq 4$ and a signature $\pm$. For $W_1, W_2\in L^{2d/3}(\ze^d)$, a map $z\in \mathbb{C}_{\pm}\mapsto W_1R_0^{\pm}(z)W_2\in B_{\infty}(L^2(\ze^d))$ is continuous.
\end{lem}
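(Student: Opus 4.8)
The plan is to reduce continuity of $z \mapsto W_1 R_0^{\pm}(z) W_2$ in the $B_\infty(L^2(\ze^d))$ norm to two ingredients already available: the H\"older continuity of $R_0^{\pm}$ in $B(L^p(\ze^d), L^{p^*}(\ze^d))$ for $1 \le p < 3_*$ established in the previous lemma, and an $\e/3$-type density argument in the potentials. First I would handle the case $W_1, W_2 \in C_c^{\infty}(\ze^d)$ (i.e.\ finitely supported). In that case, since the convolution kernel of $R_0^{\pm}(z)$ is bounded in $L^\infty$ uniformly for $z$ in compact subsets of $\mathbb{C}_{\pm}$ (this follows from the limiting absorption principle for $h_0(D)$ in \cite{TT}, or more elementarily from the decay of the kernel away from critical energies), the operator $W_1 R_0^{\pm}(z) W_2$ has a kernel that is finitely supported and bounded, hence square-summable, so $W_1 R_0^{\pm}(z) W_2$ is Hilbert--Schmidt and in particular compact. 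Continuity in $z \in \mathbb{C}_{\pm}$ in $B_\infty$-norm for such $W_1, W_2$ then follows from the H\"older continuity estimate of the previous lemma together with H\"older's inequality (picking $p < 3_*$ so that $W_1, W_2 \in L^{2d/3}(\ze^d) \subset \ell^{r}$ for the relevant exponents, using that on $\ze^d$ finitely supported functions lie in every $L^r$): for $z, w$ in the compact set one writes $W_1(R_0^{\pm}(z) - R_0^{\pm}(w))W_2 = (W_1 \cdot)(R_0^{\pm}(z) - R_0^{\pm}(w))(W_2 \cdot)$ and bounds the $B(L^2)$-norm — which dominates the $B_\infty$ part up to passing through the compactness already shown — by $\|W_1\|_{L^{r}}\,\|R_0^{\pm}(z)-R_0^{\pm}(w)\|_{B(L^p,L^{p^*})}\,\|W_2\|_{L^{r}}$ with $1/r = 1/p - 1/p^* $ split appropriately, and this tends to $0$ as $|z - w| \to 0$.

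Next I would remove the restriction $W_1, W_2 \in C_c^{\infty}(\ze^d)$ by an $\e/3$-argument. Given $W_1, W_2 \in L^{2d/3}(\ze^d)$, choose finitely supported $W_1^{(n)} \to W_1$ and $W_2^{(n)} \to W_2$ in $L^{2d/3}(\ze^d)$. Using the uniform bound
\begin{align*}
\sup_{z \in \mathbb{C}_{\pm}} \|W_1 R_0^{\pm}(z) W_2\|_{B_\infty(L^2(\ze^d))} \le C \|W_1\|_{L^{2d/3}(\ze^d)} \|W_2\|_{L^{2d/3}(\ze^d)},
\end{align*}
which is the $p = 3_*$, $q = p^*$ endpoint case of Theorem \ref{discth} $(i)$ combined with H\"older's inequality (note $1/3_* - 1/3^* = 3/d$ and $2 \cdot \frac{3}{2d} = 3/d$), one estimates
\begin{align*}
\|W_1 R_0^{\pm}(z) W_2 - W_1^{(n)} R_0^{\pm}(z) W_2^{(n)}\|_{B_\infty} \le C\big(\|W_1 - W_1^{(n)}\|_{L^{2d/3}}\|W_2\|_{L^{2d/3}} + \|W_1^{(n)}\|_{L^{2d/3}}\|W_2 - W_2^{(n)}\|_{L^{2d/3}}\big),
\end{align*}
which is small uniformly in $z$ for $n$ large. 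Since each $W_1^{(n)} R_0^{\pm}(\cdot) W_2^{(n)}$ is $B_\infty$-continuous by the first step, and $B_\infty(L^2(\ze^d))$ is a closed subspace of $B(L^2(\ze^d))$, the uniform limit $W_1 R_0^{\pm}(\cdot) W_2$ is $B_\infty$-valued and continuous.

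The main obstacle I expect is the endpoint bound needed in the density step: one must make sure that Theorem \ref{discth} $(i)$ at $p = 3_*$ does give a \emph{compact}-operator-valued (not merely bounded) conclusion for $W_1 R_0^{\pm}(z) W_2$, since a priori the uniform bound there is only an operator-norm bound. This is resolved exactly as above — compactness is proved at the level of nice $W_1, W_2$ and then propagates by the norm-limit argument — so the only real care is bookkeeping the exponents ($\frac{1}{3_*} - \frac{1}{3^*} = \frac{3}{d}$, and $W_j \in L^{2d/3}$ contributes $\frac{3}{2d}$ each) and confirming that the previous lemma's H\"older continuity, valid for $1 \le p < 3_*$, suffices to get continuity after one more interpolation/$\e/3$ step. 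I would also remark that this lemma is the discrete analogue of Corollary \ref{abBir}, and its proof is essentially the same, so one could alternatively cite that argument verbatim with $k = (d-3)/2$ playing no explicit role beyond guaranteeing the input estimates.
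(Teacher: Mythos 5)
The proposal is correct and follows essentially the same route as the paper: an $\varepsilon/3$ argument that approximates $W_1, W_2$ by finitely supported potentials, uses the endpoint uniform bound $\sup_{z}\|R_0^{\pm}(z)\|_{B(L^{3_*},L^{3^*})}<\infty$ together with H\"older's inequality to control the approximation error uniformly in $z$, and applies the preceding H\"older-continuity lemma to the finitely supported pieces. You are somewhat more explicit than the paper in verifying compactness for the finitely supported case (finite rank/Hilbert--Schmidt) and in noting that $B_\infty$ is norm-closed so compactness survives the limit, which the paper leaves implicit; the substance is the same.
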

\begin{proof}
Take sequences of finitely supported potentials $W_{1,n}, W_{2,n}$ such that $W_{j,n}\to W_j$ in $L^{2d/3}(\ze^d)$ as $n\to \infty$ for $j=1,2$. For $z,z'\in \mathbb{C}_{\pm}$, the H\"older inequality implies
\begin{align*}
\|&W_1(R_0^{\pm}(z)-R_0^{\pm}(z'))W_2\|_{B(L^2(\ze^d))}\\
\leq& 2\|W_1-W_{1,n}\|_{L^{2d/3}(\ze^d)}\|W_2\|_{L^{2d/3}(\ze^d)}\sup_{z\in \mathbb{C}_{\pm}}\|R_0^{\pm}(z)\|_{B(L^{3_*}(\ze^d), L^{3^*}(\ze^d))}\\
+&2\|W_2-W_{2,n}\|_{L^{2d/3}(\ze^d)}\sup_{n}(\|W_{1,n}\|_{L^{2d/3}(\ze^d)})\sup_{z\in \mathbb{C}_{\pm}}\|R_0^{\pm}(z)\|_{B(L^{3_*}(\ze^d), L^{3^*}(\ze^d))}\\
+&\|W_{1,n}(R_0^{\pm}(z)-R_0^{\pm}(z'))W_{2,n}\|_{B(L^2(\ze^d))}\\
=:& I_1+I_2+I_3.
\end{align*}
Now we let $\e>0$. We fix a large $n$ such that $I_1+I_2$ is smaller than $2\e/3$. Since $W_{1,n}$ and $W_{2,n}$ are finitely supported, the previous lemma implies that $W_{1,n}(R_0^{\pm}(z)-R_0^{\pm}(z'))W_{2,n}$ is H\"older continuous in $B(L^2(\ze^d))$. Thus there exists $\d>0$ such that $|z-z'|<\d$ implies 
\begin{align*}
I_3=\|W_{1,n}(R_0^{\pm}(z)-R_0^{\pm}(z'))W_{2,n}\|_{B(L^2(\ze^d))}<\e/3.
\end{align*}
Thus we conclude that maps $z\in \mathbb{C}_{\pm}\mapsto W_1R_0^{\pm}(z)W_2$ are continuous.
\end{proof}

It remains to prove $(iv)$. 
We follow the argument as in \cite{KY} and \cite{KM}. Let $V\in L^{d/3}(\ze^d)$ be a real-valued function. Set $W_1=(\sgn V)|V|^{1/2}\in L^{2d/3}(\ze^d)$, $W_2=|V|^{1/2}\in L^{2d/3}(\ze^d)$, $H=H_0+V$ and $R(z)=(H-z)^{-1}$ for $z\in \mathbb{C}\setminus \re$. We note that for $\pm \Im z> 0$
\begin{align}\label{reseq}
W_1R_0^{\pm}(z)W_2-W_1R(z)W_2=W_1R(z)W_2W_1R_0^{\pm}(z)W_2.
\end{align}
By part $(iii)$,  it follows that $W_1R_0^{\pm}(z)W_2$ is continuous in $z\in I_{\pm}$ and hence is a compact operator . In addition, $I+W_1R_0^{\pm}(z)W_2$ is invertible in $B(L^2(\ze^d))$ for $z\in \mathbb{C}\setminus\re$ due to the Birman-Schwinger principle. In fact, if $I+W_1R_0^{\pm}(z)W_2$ is not invertible at $z\in \mathbb{C}\setminus\re$, then the compactness of $W_1R_0^{\pm}(z)W_2$ implies that $I+W_1R_0^{\pm}(z)W_2$ has a non-trivial kernel. Then it follows that $R(z)$ has a non-trivial kernel by the Birman-Schwinger principle. However, this contradicts to the self-adjointness of $H_0+V$. Moreover, if we set 
\begin{align*}
\s_{\mathrm{BS}}(H)=\s_{\mathrm{BS}}^{\pm}(H)=\{\l \in \re\mid \Ker_{L^2(\ze^d)}(I+W_1R_0^{\pm}(z)W_2)\neq 0\},
\end{align*}
we see that $\s_{\mathrm{BS}}(H)$ is a closed set with Lebesgue measure zero by Proposition \ref{cpxprop}. Since $W_1R_0^{\pm}(z)W_2\in B_{\infty}(L^2(\ze^d))$ for $z\in I_{\pm}$, $I+W_1R_0^{\pm}(z)W_2$ is a Fredholm operator with index $0$. Thus $(\ref{reseq})$ gives
\begin{align*}
W_2R(z)W_2= W_2R_0^{\pm}(z)W_2(I+W_1R_0^{\pm}(z)W_2)^{-1},\,\, z\in I_{\pm}\setminus \s_{\mathrm{BS}}(H_0).
\end{align*}
Let $[a,b]\subset I\setminus \s_{\mathrm{BS}}(H_0)$ with $a<b$. Since $(I+W_1R_0^{\pm}(z)W_2)^{-1}$ is continuous in $z\in [a,b]_{\pm}$, then 
\begin{align*}
\sup_{z\in [a,b]_{\pm}}\|(I+W_1R_0^{\pm}(z)W_2)^{-1}\|_{B(L^2(\ze^d))}<\infty.
\end{align*}
Combining this with the part $(i)$ and H\"older's inequality, we obtain
\begin{align*}
\sup_{z\in [a,b]_{\pm}}\|W_2R(z)W_2\|_{B(L^2(\ze^d))}<\infty.
\end{align*}
Since $|W_1|=|W_2|$, then 
\begin{align*}
\sup_{z\in [a,b]_{\pm}}\|W_{i_1}R(z)W_{i_2}\|_{B(L^2(\ze^d))}<\infty.
\end{align*}
for $i_1,i_2=1,2$. By \cite[Theorem XIII. 30, 31]{RS}, the local wave operators $s-\lim_{t\to\pm \infty}e^{itH}e^{-itH_0}E_{H_0}((a,b))$ exist and are complete, where $E_{H_0}(J)$ is the spectral projection to the interval $J\subset \re$ associated with $H_0$. Since $[0,4d]\setminus \L_c(H_0)\cup \s_{\mathrm{BS}}(H)$ is a countable union of such interval $(a,b)$, the wave operators $W_{\pm}=s-\lim_{t\to\pm \infty}e^{itH}e^{-itH_0}$ exist and are complete.

\end{proof}

As an application of Theorem \ref{mainprop}, we prove the further estimates of the uniform resolvent estimates for the discrete Schr\"odinger operators.

\begin{prop}\label{disclow}
Suppose $I\subset (0,4)\cap (4(d-1), 4d)$ if $d=2$ and $I\subset (0,2)\cap (4d-2, 4d)$ if $d\geq 3$. If $\supp\chi\subset h_0^{-1}(I)$, then 
\begin{align*}
\sup_{z\in I_{\pm}}\|\chi(D)R_0^{\pm}(z)\|_{B(L^p(\ze^d), L^q(\ze^d))}<\infty.
\end{align*}
holds for $(1/p,1,q)\in S_{(d-1)/2}$.
\end{prop}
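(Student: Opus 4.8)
The plan is to obtain Proposition~\ref{disclow} from Theorem~\ref{mainprop}(i) applied with $T=h_0$ and $k=(d-1)/2$. Since $\widehat{\ze^d}=\T^d$ is compact, $h_0^{-1}(I)$ is automatically compact, so the only hypothesis that needs checking is the decay estimate $(\ref{surmes})$ with $k=(d-1)/2$ on a neighbourhood of $\supp\chi$; by Example~\ref{ex} this reduces to showing that the level set $M_\l=\{h_0=\l\}$ has $d-1$ non-vanishing principal curvatures at every point of $\supp\chi$. I would first settle the bookkeeping: since $I$ is compact and disjoint from the critical values $\{4k\}_{k=0}^d$, choose a slightly larger compact interval $I'$ with $I\subset\operatorname{int}I'$ and $I'$ still contained in $(0,4)\cup(4,8)$ if $d=2$, respectively $(0,2)\cup(4d-2,4d)$ if $d\ge3$, and put $U=h_0^{-1}(\operatorname{int}I')$; then $U$ is open, contains $\supp\chi$, and $\nabla h_0\neq0$ on $\bar U$, so once the curvature claim is established on $U$ the theorem applies and gives the estimate for $(1/p,1/q)\in S_{(d-1)/2}$.

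For the curvature, write $h_0(\x)=2d-2\sum_{j=1}^d\cos2\pi\x_j$, so $\nabla h_0(\x)=4\pi(\sin2\pi\x_1,\dots,\sin2\pi\x_d)$ and $\operatorname{Hess}h_0(\x)=8\pi^2\operatorname{diag}(\cos2\pi\x_1,\dots,\cos2\pi\x_d)$. For a hypersurface $\{h=\l\}$ with $\nabla h\neq0$, all $d-1$ principal curvatures are non-zero at $\x$ iff $\operatorname{Hess}h(\x)$ is non-degenerate on the tangent space $(\nabla h(\x))^\perp$, equivalently iff the bordered determinant $\det\begin{pmatrix}\operatorname{Hess}h(\x)&\nabla h(\x)\\\nabla h(\x)^T&0\end{pmatrix}$ is non-zero. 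Using the elementary identity $\det\begin{pmatrix}D&v\\v^T&0\end{pmatrix}=-\sum_j v_j^2\prod_{k\neq j}a_k$ for $D=\operatorname{diag}(a_1,\dots,a_d)$, this reduces to showing
\begin{align*}
\Phi(\x):=\sum_{j=1}^d\sin^2(2\pi\x_j)\prod_{k\neq j}\cos(2\pi\x_k)\neq0
\end{align*}
on $\supp\chi$. Since $h_0(\x)=\l$ is equivalent to $\sum_j\cos2\pi\x_j=d-\l/2$, the whole matter comes down to a sign analysis of $\Phi$ under this one linear constraint.

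Writing $c_j=\cos2\pi\x_j$, $s_j=\sin2\pi\x_j$: for $d=2$ I would use the factorization $\Phi=(c_1+c_2)(1-c_1c_2)$, together with $c_1+c_2=2-\l/2\in(-2,2)\setminus\{0\}$ on $h_0^{-1}((0,4)\cup(4,8))$ and $c_1c_2\le(c_1^2+c_2^2)/2\le1$ with equality only when $c_1+c_2=\pm2$ (excluded), to conclude $\Phi\neq0$. For $d\ge3$ with $I\subset(0,2)$ one has $\sum_jc_j=d-\l/2>d-1$, which forces $c_j>0$ for every $j$ (otherwise $\sum_jc_j\le0+(d-1)$); hence $\prod_{k\neq j}c_k>0$ and $\Phi\ge0$, and $\Phi=0$ would force all $s_j=0$, hence all $c_j=1$ and $\l=0$, excluded. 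The high-energy window $I\subset(4d-2,4d)$ is symmetric: there $\sum_jc_j<-(d-1)$ forces $c_j<0$ for all $j$, so $\prod_{k\neq j}c_k$ has the constant sign $(-1)^{d-1}$ and the same argument yields $(-1)^{d-1}\Phi>0$. This sign analysis is the only substantive step; it is what pins down the dimension restriction and the precise intervals (the curvature of $M_\l$ genuinely degenerates for intermediate energies when $d\ge3$), while the bordered-Hessian reduction and the passage through Example~\ref{ex} and Theorem~\ref{mainprop}(i) are routine.
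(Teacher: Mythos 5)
Your proof is correct and follows essentially the same route as the paper: reduce to the non-vanishing of all $d-1$ principal curvatures of $M_\lambda$ on the indicated energy windows, invoke Example~\ref{ex} to get $(\ref{surmes})$ with $k=(d-1)/2$, and then apply Theorem~\ref{mainprop}(i). The only difference is that you supply a self-contained bordered-Hessian computation (and the $d=2$ factorization $\Phi=(c_1+c_2)(1-c_1c_2)$, and the sign analysis forcing $c_j>0$ or $c_j<0$ near the band edges) where the paper simply cites \cite[Lemma 4.3]{IM} for the curvature fact.
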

\begin{proof}
Let $\l\in I$. As is proved in \cite[Lemma 4.3]{IM}, all principal curvatures of $M_{\l}=\{h=\l\}$ are non-vanishing. By Example \ref{ex}, we obtain the desired result.
\end{proof}

\appendix

\section{Some estimates for $\c_{z,\pm}$}

In this section, we give proofs of the estimates for $\c_{z,\pm}$ which is needed for the proof of Theorem \ref{mainprop}.

If necessary we take $\supp \chi$ small, we may assume $X=\re$. We recall the situation of the proof of Theorem \ref{mainprop}.
Set 
\begin{align*}
\tilde{\chi}(\x',\x_d,\l)=\frac{\chi^2(\x',\x_d+h_{\l}(\x')) }{e(\x', \x_d+h_{\l}(\x'))},\,\, b(\x',\x_d,\l)=e(\x', \x_d+h_{\l}(\x')))^{-1}.
\end{align*}
Note that $b$ is real-valued and $\min_{(\x',\x_d)\in\supp\chi(\cdot,\cdot,\l) \tilde{\chi} , \l\in I}b(\x',\x_d,\l)>0$.
Recall that 
\begin{align*}
\c_{z,\pm}(\x',x_d)=\int_{\re}\frac{e^{2\pi ix_d\x_d}\tilde{\chi}(\x',\x_d,\l) }{\x_d-i(\Im z)b(\x',\x_d,\l)}d\x_d,\,\, \Re z=\l, \,\, \pm \Im z\geq 0.
\end{align*}
Here if $\pm \Im z=0$, we interpret $\c_{z,\pm}$ as
\begin{align*}
\c_{z,\pm}(\x',x_d)=&\int_{\re}\frac{e^{2\pi ix_d\x_d}\chi^2(\x',\x_d+h_{\l}(\x')) }{e(\x', \x_d+h_{\l}(\x'))\x_d\mp i0}d\x_d\\
=&\int_{\re}\frac{e^{2\pi ix_d\x_d}\tilde{\chi}(\x',\x_d,\l)) }{\x_d\mp i0}d\x_d,
\end{align*}
where $(\x_d\mp i0)^{-1}$ denote the distributions $\lim_{\e>0,\e\to 0}(\x_d\mp i\e)^{-1}$. In order to estimate $\c_{z,\pm}$, we need some lemmas.

\begin{lem}\label{Hil}
Let $\g, \g_1\in C_c^{\infty}(\re)$ and $\m_1, \m_2\in\re\setminus\{0\}$. Then
\begin{align*}
&|\int_{\re}\g(\m_1 y_d)\mathrm{p.v.}\frac{e^{2\pi iy_d\x_d}}{y_d}dy_d|\leq \pi\|\hat{\g}\|_{L^1(\re)},\,\, |\int_{\re}\mathrm{p.v.}\frac{e^{2\pi iy_d\x_d}}{y_d}dy_d|=\pi\\
&|\int_{\re}\g(\m_1y_d)\g_1(\m_2y_d)\mathrm{p.v.}\frac{e^{2\pi iy_d\x_d}}{y_d}dy_d|\leq \pi\|\hat{\g}\|_{L^1(\re)}\|\hat{\g_1}\|_{L^1(\re)},\\
\end{align*}
\end{lem}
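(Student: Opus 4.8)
The plan is to reduce all three assertions to the single inequality
\begin{equation*}
\left|\,\mathrm{p.v.}\!\int_\re h(y_d)\,\frac{e^{2\pi iy_d\x_d}}{y_d}\,dy_d\,\right|\leq \pi\,\|\hat h\|_{L^1(\re)},\qquad h\in C_c^\infty(\re),\ \x_d\in\re,\tag{$\star$}
\end{equation*}
together with the explicit evaluation $\mathrm{p.v.}\int_\re e^{2\pi iy_d\x_d}/y_d\,dy_d=i\pi\,\sgn(\x_d)$, where throughout the Fourier transform is normalised by $\hat h(\y)=\int_\re h(y)e^{-2\pi iy\y}\,dy$. Granting this, the first displayed inequality follows by taking $h(\cdot)=\g(\m_1\cdot)$ and using the dilation identity $\|\widehat{\g(\m_1\cdot)}\|_{L^1}=\|\hat\g\|_{L^1}$ (which is where $\m_1\neq0$ enters); the middle equality is the stated evaluation, whose modulus is $\pi$ whenever $\x_d\neq0$ (and it is $0\le\pi$ when $\x_d=0$); and the last inequality follows by taking $h(\cdot)=\g(\m_1\cdot)\g_1(\m_2\cdot)$ and estimating, via Young's convolution inequality,
\[
\|\hat h\|_{L^1}=\big\|\widehat{\g(\m_1\cdot)}*\widehat{\g_1(\m_2\cdot)}\big\|_{L^1}\leq\big\|\widehat{\g(\m_1\cdot)}\big\|_{L^1}\big\|\widehat{\g_1(\m_2\cdot)}\big\|_{L^1}=\|\hat\g\|_{L^1}\|\hat{\g_1}\|_{L^1}.
\]

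To prove $(\star)$ I would invoke the standard fact that the tempered distribution $\mathrm{p.v.}\,\tfrac1{y_d}$ has Fourier transform $-i\pi\,\sgn$ (this also yields the evaluation above; alternatively one computes it by hand, noting that the real part of $\int_{|y_d|\geq\e}e^{2\pi iy_d\x_d}/y_d\,dy_d$ vanishes by oddness while the imaginary part equals $2\int_\e^\infty\sin(2\pi y_d\x_d)/y_d\,dy_d\to\pi\,\sgn(\x_d)$ by the Dirichlet integral). Since $h(\cdot)e^{2\pi i\x_d(\cdot)}\in\mathcal S(\re)$ has Fourier transform $\hat h(\cdot-\x_d)$, Parseval's identity for tempered distributions gives
\[
\mathrm{p.v.}\!\int_\re h(y_d)\,\frac{e^{2\pi iy_d\x_d}}{y_d}\,dy_d=\big\langle\,\mathrm{p.v.}\,\tfrac1{(\cdot)}\,,\,h(\cdot)e^{2\pi i\x_d(\cdot)}\,\big\rangle=i\pi\int_\re\hat h(\y)\,\sgn(\y+\x_d)\,d\y,
\]
and $(\star)$ follows at once since $|\sgn|\leq 1$ and $\hat h\in L^1(\re)$.

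If one prefers to avoid the distributional bookkeeping, the same computation can be done directly: write $h(y_d)=\int\hat h(\y)e^{2\pi iy_d\y}\,d\y$, integrate over $\e\leq|y_d|\leq R$ where Fubini is legitimate for fixed $\e,R$, note that $h$ is supported in a fixed compact set so the integral is already independent of $R$ once $R$ is large, and pass to the limits $R\to\infty$ and then $\e\downarrow0$ by dominated convergence against $\hat h\in L^1$. The only inputs are the uniform bound $\sup_{0<\e<R,\,s\in\re}\big|\int_\e^R\sin(2\pi y_ds)/y_d\,dy_d\big|<\infty$ and the pointwise limit $\int_{|y_d|\geq\e}e^{2\pi iy_ds}/y_d\,dy_d\to i\pi\,\sgn(s)$, both of which reduce, after the substitution $t=2\pi|s|y_d$, to classical properties of $\int\sin t/t\,dt$. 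The only genuinely delicate point in either route is this interchange of the principal-value limit with the $\y$-integration, and it is precisely the above uniform bound that legitimises it; everything else is routine.
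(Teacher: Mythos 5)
Your proof is correct and follows essentially the same route as the paper: identify $\mathcal{F}(\mathrm{p.v.}\,1/y_d)=-i\pi\,\sgn$, apply Parseval to get the bound $\pi\|\hat\g\|_{L^1}$, handle general $\m_1$ by the dilation invariance of $\|\hat\cdot\|_{L^1}$, and deduce the third inequality via $\widehat{\g\g_1}=\hat\g*\hat{\g_1}$ and Young's inequality. The paper states these steps tersely ("by scaling", a one-line Parseval computation), while you supply the justification for the interchange of the principal-value limit with the $\y$-integration, which the paper leaves implicit; the underlying argument is the same.
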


\begin{proof}
We leran
\begin{align*}
|\int_{\re}\mathrm{p.v.}\frac{1}{y_d} \g(y_d)e^{2\pi iy_d\x_d}dy_d|=&\pi |\int_{\re}\sgn(\x_d-\y_d)\hat{\g}(-\y_d)d\y_d|\\
\leq&\pi\|\hat{\g}\|_{L^1(\re)}.
\end{align*}
By scaling, we obtain the first inequality.
The second equality follows from $\mathcal{F}(\mathrm{p.v.}\frac{1}{y_d})(\x_d)=-i\pi\sgn(\x_d)$. The third inequality follows from the first inequality and the Young inequality:
\begin{align*}
\|\hat{\g}\hat{\g_1}\|_{L^1(\re)}=&\|\hat{\g}\ast \hat{\g_1}\|_{L^1(\re)}\\
\leq&\|\hat{\g}\|_{L^1(\re)}\|\hat{\g_1}\|_{L^1(\re)}.
\end{align*}

\end{proof}

\begin{lem}\label{leminte}
Let $\m\in\re\setminus \{0\}$ and $\f, a, a_1\in C_c^{\infty}(\re)$ such that $a, a_1$ are real-valued and $a, a_1>0$ on $\supp \f$.
\begin{itemize}
\item[$(i)$] There exists $C>0$ independent of $x_d\in\re$, $\f, a$ and $\m\neq 0$ such that
\begin{align}\label{Aintes}
|\int_{\re}\frac{e^{2\pi ix_d\x_d} \f(\m\x_d)}{\x_d-ia(\m \x_d)}d\x_d|\leq C(\sup_{\x_d\in \re}|\frac{\f(\x_d)}{a(\x_d)}|+\|\hat{\f}\|_{L^1(\re)}+\sup_{\x_d\in \re}|\f(\x_d)a(\x_d)|).
\end{align}
\item[$(ii)$] Let $l\geq 2$ be an integer. Then there exists $C'>0$ independent of $x_d\in\re$, $\f, a$, $l$ and $\m\neq 0$ such that
\begin{align}\label{Aintes2}
|\int_{\re}\frac{e^{2\pi ix_d\x_d} \f(\m\x_d)}{(\x_d-ia(\m \x_d))^l}d\x_d|\leq C'(\sup_{\x_d\in \re}|\frac{|\f(\x_d)|}{|a(\x_d)|^l}+\|\f\|_{L^{\infty}(\re)}).
\end{align}
\item[$(iii)$] Let $l_1, l_2\geq 1$ be an integer. Then there exists $C''>0$ independent of $x_d\in\re$, $\f, a$, $l$ and $\m\neq 0$ such that
\begin{align}\label{Aintes3}
|\int_{\re}\frac{e^{2\pi ix_d\x_d} \f(\m\x_d)}{(\x_d-ia(\m \x_d))^{l_1}(\x_d-ia_1(\m \x_d))^{l_2}}d\x_d|\leq C''(\sup_{\x_d\in \re}|\frac{|\f(\x_d)|}{|a(\x_d)|^{l}}+\|\f\|_{L^{\infty}(\re)}).
\end{align}
\end{itemize}
\end{lem}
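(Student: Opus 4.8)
The plan is to split off from the integrand the principal-value kernel $1/\x_d$, for which Lemma~\ref{Hil} applies, and to estimate everything else by elementary size bounds; parts $(ii)$ and $(iii)$ will need no oscillation at all, since there the denominator carries total power at least $2$.

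For part $(i)$ I would split the $\x_d$-integral at $|\x_d|=1$. On $|\x_d|\leq 1$, since $a$ is real-valued we have $|\x_d-ia(\m\x_d)|\geq|a(\m\x_d)|$, whence
\[
\Bigl|\int_{|\x_d|\leq 1}\frac{e^{2\pi ix_d\x_d}\f(\m\x_d)}{\x_d-ia(\m\x_d)}\,d\x_d\Bigr|\leq\int_{|\x_d|\leq 1}\frac{|\f(\m\x_d)|}{|a(\m\x_d)|}\,d\x_d\leq 2\sup_{\x_d}\Bigl|\frac{\f(\x_d)}{a(\x_d)}\Bigr|.
\]
On $|\x_d|\geq 1$ I would write $1/(\x_d-ia(\m\x_d))$ as the sum of $1/\x_d$ and $ia(\m\x_d)/\bigl(\x_d(\x_d-ia(\m\x_d))\bigr)$. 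The second summand is bounded in absolute value by $|a(\m\x_d)|\,\x_d^{-2}$ (using $|\x_d-ia(\m\x_d)|\geq|\x_d|$), so its contribution is at most $\sup_{\x_d}|\f(\x_d)a(\x_d)|\int_{|\x_d|\geq1}\x_d^{-2}\,d\x_d=2\sup_{\x_d}|\f(\x_d)a(\x_d)|$. The first summand gives $\int_{|\x_d|\geq 1}e^{2\pi ix_d\x_d}\f(\m\x_d)\x_d^{-1}\,d\x_d$, which I would rewrite as the full principal value over $\re$ minus its truncation to $|\x_d|\leq 1$; the former is $\leq\pi\|\hat\f\|_{L^1(\re)}$ by Lemma~\ref{Hil}, and the latter is $\leq C\|\hat\f\|_{L^1(\re)}$ by the same Fourier computation, the truncated kernel $\mathbf 1_{\{|\x_d|\leq 1\}}\,\mathrm{p.v.}\,\x_d^{-1}$ still having a bounded Fourier transform and $\|\widehat{\f(\m\,\cdot)}\|_{L^1(\re)}=\|\hat\f\|_{L^1(\re)}$ being scale invariant. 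Summing the three bounds yields $(\ref{Aintes})$.

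Parts $(ii)$ and $(iii)$ follow by the same splitting with crude bounds only. On $|\x_d|\leq 1$ one uses $|\x_d-ia(\m\x_d)|\geq|a(\m\x_d)|$, and for $(iii)$ also the analogous lower bound for the $a_1$-factor, to produce the term of the form $\sup_{\x_d}\frac{|\f(\x_d)|}{|a(\x_d)|^{l}}$; on $|\x_d|\geq 1$ one uses $|\x_d-ia(\m\x_d)|\geq|\x_d|$ together with the fact that the denominator has total degree $l\geq 2$, resp.\ $l_1+l_2\geq 2$, so that $\int_{|\x_d|\geq 1}|\x_d|^{-l}\,d\x_d<\infty$ and the $\|\f\|_{L^\infty}$ term appears. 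No principal value enters, which is exactly why those statements require the denominator to have degree at least $2$.

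The step I expect to be the main obstacle is the uniform-in-$(x_d,\m)$ control of the truncated-kernel integral in part $(i)$: the naive estimate $\int_{1\leq|\x_d|\leq R}|\x_d|^{-1}\,d\x_d$ diverges logarithmically in $R$, and exploiting the smoothness of $\f$ or the oscillation $e^{2\pi ix_d\x_d}$ separately reintroduces a factor of size $\log|x_d|$; one must instead route the estimate through the sign structure of $\mathcal F(\mathrm{p.v.}\,\x_d^{-1})$, exactly as in the proof of Lemma~\ref{Hil}. Once part $(i)$ is settled, parts $(ii)$ and $(iii)$ are routine, and the constants $C,C',C''$ come out of collecting the regional estimates.
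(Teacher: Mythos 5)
Your proof is correct and follows essentially the same route as the paper's: split the $\x_d$-integral near and away from the origin, control the local piece via $|\x_d-ia(\m\x_d)|\geq|a(\m\x_d)|$, and on the complement (in part $(i)$) separate $\frac{1}{\x_d-ia(\m\x_d)}=\frac{1}{\x_d}+\frac{ia(\m\x_d)}{\x_d(\x_d-ia(\m\x_d))}$, handling the $1/\x_d$ term by the Hilbert-transform bound of Lemma~\ref{Hil} and the remainder by the extra decay; parts $(ii)$--$(iii)$ then need only the pointwise lower bounds because the denominator has degree at least $2$. The single cosmetic difference is that the paper localizes with a smooth $\g\in C_c^\infty$ so that the truncated principal value falls directly under the third inequality of Lemma~\ref{Hil}, whereas you use the sharp cutoff $\mathbf{1}_{|\x_d|\leq 1}$ and justify the truncated piece by noting that $\mathcal{F}\bigl(\mathbf{1}_{[-1,1]}\,\mathrm{p.v.}\,\x_d^{-1}\bigr)$ is a bounded sine integral together with the scale invariance $\|\widehat{\f(\m\cdot)}\|_{L^1(\re)}=\|\hat\f\|_{L^1(\re)}$ — an equally valid and equally uniform estimate.
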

\begin{proof}
$(i)$ Take $\g\in C_c^{\infty}(\re, [0,1])$ such that $\g=1$ on $|t|\leq 1$ and $\g=0$ on $|t|\geq 2$. Since $a$ is real-valued, then
\begin{align*}
|\int_{\re}\frac{e^{2\pi ix_d\x_d} \f(\m\x_d)\g(\x_d)}{\x_d-ia(\m \x_d)}d\x_d|\leq&\int_{\re}\frac{|\f(\m\x_d)\g(\x_d)|}{|a(\m\x_d)|}d\x_d\\
\leq&\sup_{\x_d\in \re}|\frac{\f(\x_d)}{a(\x_d)}|\|\g\|_{L^1(\re)}.
\end{align*}
We note that
\begin{align*}
\int_{\re}\frac{e^{2\pi ix_d\x_d} \f(\m\x_d)(1-\g(\x_d))}{\x_d-ia(\m \x_d)}d\x_d=&\int_{\re}\frac{e^{2\pi ix_d\x_d} \f(\m\x_d)(1-\g(\x_d))}{\x_d}d\x_d\\
&+i\int_{\re}\frac{e^{2\pi ix_d\x_d} \f(\m\x_d)a(\m\x_d)(1-\g(\x_d))}{\x_d(\x_d-ia(\m \x_d))}d\x_d\\
=&:I_1+I_2.
\end{align*}
By Lemma \ref{Hil}, we have
\begin{align*}
|I_1|=& |\int_{\re}\mathrm{p.v.}\frac{e^{2\pi ix_d\x_d} \f(\m\x_d)}{\x_d}d\x_d-\int_{\re}\mathrm{p.v.}\frac{e^{2\pi ix_d\x_d} \f(\m\x_d)\g(\x_d)}{\x_d}d\x_d|\\
\leq&\pi\|\hat{\f}\|_{L^1(\re)}(1+\|\hat{\g}\|_{L^1(\re)}).
\end{align*}
Moreover, since $a$ is real-valued, we have
\begin{align*}
|I_2|\leq \sup_{\x_d\in \re}|\f(\x_d)a(\x_d)|\int_{\re}\frac{1-\g(\x_d)}{\x_d^2}d\x_d.
\end{align*}
Thus we set
\begin{align*}
C= \max(\|\g\|_{L^1(\re)}, \pi(1+\|\hat{\g}\|_{L^1(\re)}), \int_{\re}\frac{1-\g(\x_d)}{\x_d^2}d\x_d),
\end{align*}
and obtain $(\ref{Aintes})$.

$(ii)$ follows from $(iii)$.

$(iii)$ Let $\g$ be as above. Then
\begin{align*}
|\int_{\re}\frac{e^{2\pi ix_d\x_d} \f(\m\x_d)\g(\x_d)}{(\x_d-ia(\m \x_d))^{l_1}(\x_d-ia_1(\m \x_d))^{l_2}}d\x_d|\leq \sup_{\x_d\in \re}|\frac{|\f(\x_d)|}{|a(\x_d)|^{l_1}|a_1(\x_d)|^{l_2}}\|\g\|_{L^1(\re)}.
\end{align*}
Moreover, since $a,a_1$ is real-valued and $l_1+l_2\geq 2$, then
\begin{align*}
|\int_{\re}\frac{e^{2\pi ix_d\x_d} \f(\m\x_d)(1-\g(\x_d))}{(\x_d-ia(\m \x_d))^{l_1}(\x_d-ia_1(\m \x_d))^{l_2}}d\x_d|\leq& \|\f\|_{L^{\infty}(\re)}\int_{\re}\frac{1-\g(\x_d)}{|\x_d|^{l_1+l_2}}d\x_d\\
\leq&\|\f\|_{L^{\infty}(\re)}\int_{\re}\frac{1-\g(\x_d)}{|\x_d|^2}d\x_d.
\end{align*}
Thus we set $C''=\max(\|\g\|_{L^1(\re)}, \int_{\re}\frac{1-\g(\x_d)}{|\x_d|^2}d\x_d)$ and obtain $(\ref{Aintes3})$. 

\end{proof}

The main result of this section is the following proposition.

\begin{prop}\label{appth}
Fix a signature $\pm$.
\begin{itemize}
\item[$(i)$] For $\a\in \mathbb{N}^{d-1}$, there exists $C_{\a}>0$ such that
\begin{align}\label{games1}
|\pa_{\x'}^{\a}\c_{z,\pm}(\x',x_d)|\leq C_{\a}
\end{align}
for $z\in I_{\pm}$, $x_d\in \re$ and $\x'\in \re^{d-1}$.
\item[$(ii)$] For $\a\in \mathbb{N}^{d-1}$, there exists $C_{\a}'>0$ such that
\begin{align}\label{games2}
|\pa_{\x'}^{\a}(\c_{z,\pm}(\x',x_d)-\c_{w,\pm}(\x',x_d))|\leq C_{\a}'(1+|x_d|)|z-w|
\end{align}
for $z, w\in I_{\pm}$ with $|z-w|\leq 1$, $x_d\in \re$ and $\x'\in \re^{d-1}$.
\end{itemize}

\end{prop}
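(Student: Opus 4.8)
The plan is to differentiate under the integral in $\x'$ and, after the rescaling $\x_d\mapsto(\Im z)\tau$ of the fibre variable, to reduce everything to the model integrals of Lemma \ref{leminte}; the boundary case $\Im z=0$ is reduced instead to Lemma \ref{Hil} through the Sokhotski--Plemelj formula $(\x_d\mp i0)^{-1}=\mathrm{p.v.}\,\x_d^{-1}\pm i\pi\d_0$.

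\emph{Part $(i)$.} Fix $\a$ and expand $\pa_{\x'}^{\a}\c_{z,\pm}$ by the Leibniz rule. Each $\x'$-derivative either hits $\tilde\chi$ (producing a function of $\x_d$ that is compactly supported with seminorms bounded uniformly in $\x'\in U$, $\l\in I$) or hits the denominator, where $\pa_{\x'_m}(\x_d-i(\Im z)b)^{-1}=i(\Im z)(\pa_{\x'_m}b)(\x_d-i(\Im z)b)^{-2}$; thus every differentiation of a denominator factor increases its power by one and brings out a factor $\Im z$, so in each resulting term the power of $\Im z$ equals the power of the denominator minus one. Hence $\pa_{\x'}^{\a}\c_{z,\pm}(\x',x_d)$ is a finite sum of terms $(i\Im z)^{l}\int_{\re}e^{2\pi ix_d\x_d}g(\x',\x_d)(\x_d-i(\Im z)b(\x',\x_d,\l))^{-(l+1)}\,d\x_d$ with $0\le l\le|\a|$, where $g(\x',\cdot)\in C^{\infty}_c(\re)$ has uniformly bounded seminorms and $\supp g(\x',\cdot)\subset\{b(\x',\cdot,\l)\ge c_0\}$ for a fixed $c_0>0$. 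For $\Im z\neq0$ the substitution $\x_d=(\Im z)\tau$ cancels the powers of $\Im z$ exactly and turns this into $i^{l}\int_{\re}e^{2\pi i(x_d\Im z)\tau}\f(\tau)(\tau-ia(\tau))^{-(l+1)}\,d\tau$ with $\f(\tau)=g(\x',(\Im z)\tau)$ and $a(\tau)=b(\x',(\Im z)\tau,\l)$ real-valued, $a\ge c_0$ on $\supp\f$, seminorms controlled uniformly; this is exactly the integral bounded in Lemma \ref{leminte}$(i)$ if $l=0$ and in Lemma \ref{leminte}$(ii)$ if $l\ge1$, those bounds being uniform in the ``$x_d$''-slot, hence in $x_d$ and $z\in I_{\pm}$. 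For $\Im z=0$, Sokhotski--Plemelj writes $\pa_{\x'}^{\a}\c_{z,\pm}$ as a sum of point values of $C^{\infty}_c$ functions plus principal-value integrals of the type estimated in Lemma \ref{Hil}. This proves $(\ref{games1})$.

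\emph{Part $(ii)$.} Shifting $\x_d$ back in the definition and using $(\ref{Timp})$ gives $\c_{z,\pm}(\x',x_d)=e^{-2\pi ix_dh_{\l}(\x')}\F_z(\x',x_d)$, $\F_z=\int_{\re}e^{2\pi ix_d\x_d}\chi^{2}(\x)(T(\x)-z)^{-1}\,d\x_d$, $\l=\Re z$, $\l'=\Re w$. Decompose
\begin{align*}
\c_{z,\pm}-\c_{w,\pm}=e^{-2\pi ix_dh_{\l}}(\F_z-\F_w)+\bigl(e^{2\pi ix_d(h_{\l'}-h_{\l})}-1\bigr)\c_{w,\pm}.
\end{align*}
In the second summand $|\pa_{\x'}^{\b}(h_{\l'}-h_{\l})|\le C_{\b}|\l-\l'|\le C_{\b}|z-w|$ uniformly, so a Leibniz expansion combined with $|\pa_{\x'}^{\b}\c_{w,\pm}|\le C_{\b}$ from part $(i)$ and the trivial bound $|\pa_{\x'}^{\a}(\c_{z,\pm}-\c_{w,\pm})|\le2C_{\a}$ (used when $|x_d||z-w|\ge1$) yields the bound $C_{\a}(1+|x_d|)|z-w|$. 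For the first summand, use $(T-z)^{-1}-(T-w)^{-1}=(z-w)(T-z)^{-1}(T-w)^{-1}$ and shift $\x_d\mapsto\x_d+h_{\l}(\x')$, which by $(\ref{Timp})$ (for both $z$ and $w$) turns $e^{-2\pi ix_dh_{\l}}(\F_z-\F_w)$ into $(z-w)$ times $\int_{\re}e^{2\pi ix_d\x_d}G(\x',\x_d)\,(\x_d-i(\Im z)b_1)^{-1}(\x_d+\d(\x')-i(\Im w)b_2)^{-1}\,d\x_d$, with $G(\x',\cdot)\in C^{\infty}_c(\re)$, $b_1,b_2$ real-valued and $\ge c_0$ on $\supp G$, and $\d(\x')=h_{\l}(\x')-h_{\l'}(\x')$ together with all its $\x'$-derivatives $O(|z-w|)$. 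Differentiating in $\x'$ and integrating by parts once in $\x_d$ — using $(\x_d-i(\Im z)b_1)^{-2}=-(1-i(\Im z)\pa_{\x_d}b_1)^{-1}\pa_{\x_d}(\x_d-i(\Im z)b_1)^{-1}$ and its analogue for $b_2$, the reciprocal being a uniformly bounded smooth multiplier since $\Re(1-i(\Im z)\pa_{\x_d}b_1)=1$ — removes the squared denominators at the cost of a factor $2\pi ix_d$ whenever $\pa_{\x_d}$ lands on $e^{2\pi ix_d\x_d}$; the surplus powers of $x_d$ so produced are each paired with a power of $\Im z$, $\Im w$ or $\pa_{\x'}\d=O(|z-w|)$, and (with the decay $\c_{z,\pm}=O((1+|\Im z|)^{-1})$, obtained by the same rescaling, handling large $|\Im z|$) the net $x_d$-growth stays linear. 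After $\x_d=(\Im z)\tau$ (or $\x_d=(\Im w)\tau$ when $|\Im w|\ge|\Im z|$) the remaining integrals fall under Lemma \ref{leminte}$(iii)$ (and $(i)$), the positivity hypothesis holding on the relevant support, giving the uniform bound $C(1+|x_d|)$ for the integral, hence $(\ref{games2})$; again $\Im z=0$ or $\Im w=0$ is handled by the Sokhotski--Plemelj reduction or by passing to the limit. The crux throughout is uniformity of the constants up to the real axis $\Im z=0$ and under $z\to w$, which is exactly what the rescaling $\x_d\mapsto(\Im z)\tau$ buys (converting the $\Im z$-dependence into the harmless scaling parameter of Lemma \ref{leminte}), together with the real-valuedness of $b$ and the single integration by parts that trades the double poles of $(ii)$ for the single-pole integrals already controlled in $(i)$.
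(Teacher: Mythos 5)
Your proof of part $(i)$ follows essentially the same route as the paper: write $\c_{z,\pm}$ as a one-dimensional fibre integral, differentiate under the integral, rescale $\x_d\mapsto(\Im z)\tau$, and apply Lemma~\ref{leminte}; the only cosmetic difference is that you differentiate before rescaling. This is fine.

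Part $(ii)$ is where you take a genuinely different path, and I believe there is a gap. The paper proves $(\ref{games2})$ by a telescoping decomposition $J_1+J_2+J_3$ that changes one dependency at a time in the \emph{already shifted} kernel $\tilde\chi(\x',\x_d,\cdot)/(\x_d-i(\Im\cdot)\,b(\x',\x_d,\cdot))$: $J_1$ varies $\Im z\to\Im w$, $J_2$ varies $\tilde\chi(\cdot,\l)\to\tilde\chi(\cdot,\s)$, $J_3$ varies $b(\cdot,\l)\to b(\cdot,\s)$. The crucial feature is that in each $J_i$ both denominator factors are of the form $\x_d-i(\text{positive})$, i.e., both poles sit on the imaginary axis with the \emph{same} real part $\x_d=0$, and both scale with the \emph{same} parameter $\Im w$. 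That is exactly the hypothesis of Lemma~\ref{leminte}$(iii)$, and the rescaling $\x_d\mapsto(\Im w)\tau$ normalizes both factors at once. For $J_1$, the $(1+|x_d|)$-growth is then obtained very economically by looking at the derivative $J_1'(x_d)$: the extra factor of $\x_d$ this produces in the integrand kills one pole so that Lemma~\ref{leminte}$(iii)$ applies with a constant bound, and integrating back gives the linear growth.

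In your decomposition you instead pull out the phase, $\c_{z,\pm}-\c_{w,\pm}=e^{-2\pi ix_dh_\l}(\F_z-\F_w)+(e^{2\pi ix_d(h_{\l'}-h_\l)}-1)\c_{w,\pm}$, and use the resolvent identity on $\F_z-\F_w$. Your treatment of the second summand is correct (the trivial bound $|\pa_{\x'}^\a(\c_z-\c_w)|\le2C_\a$ lets you assume $|x_d||z-w|<1$, after which the Leibniz expansion is controlled by the lowest-order term), and that part is a nice, self-contained observation. The trouble is the first summand. After shifting $\x_d\mapsto\x_d+h_\l(\x')$ you obtain $(z-w)\int e^{2\pi ix_d\x_d}G(\x_d-i(\Im z)b_1)^{-1}(\x_d+\d(\x')-i(\Im w)b_2)^{-1}d\x_d$ with $\d=h_\l-h_{\l'}$, and this integral does \emph{not} fall under Lemma~\ref{leminte}$(iii)$: the two poles are misaligned by the real shift $\d$, and they are scaled by the two \emph{different} parameters $\Im z$ and $\Im w$. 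Rescaling $\x_d=(\Im z)\tau$ normalizes the first factor but sends the second pole to $\tau=-\d/\Im z-ib_2\Im w/\Im z$, which can wander anywhere in the lower/upper half-plane and carry no lower bound on its distance to the real axis in rescaled units; the far-field part of the Lemma~\ref{leminte} proof ($\int_{|\x_d|\ge1}\|\f\|_\infty|\x_d|^{-2}$) also breaks when the shifted pole is large. Your integration-by-parts manoeuvre does not resolve this: it reduces one denominator power while potentially increasing the other (when $\pa_{\x_d}$ lands on the remaining factor), and the claim that ``surplus powers of $x_d$ are each paired with a power of $\Im z$, $\Im w$ or $\pa_{\x'}\d$'' does not yield $(1+|x_d|)|z-w|$: pairing $x_d$ with $\Im z$ gives $(\Im z)x_d$, which is unbounded and unrelated to $|z-w|$. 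In short, the resolvent-identity decomposition forces you to estimate a kernel whose two poles live at different locations and different scales, a situation Lemma~\ref{leminte} was not built for; the paper's telescoping through $\tilde\chi$ and $b$ is precisely what avoids this, and its $J_1'$-trick is a cleaner mechanism for producing the $(1+|x_d|)$ factor. To repair your proof you would either have to prove a version of Lemma~\ref{leminte}$(iii)$ that allows a real shift and two independent small scales (with careful control of the extra logarithmic or $x_d$-growth this introduces), or switch to the paper's telescoping.

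Two smaller remarks: (a) the identity $\c_{z,\pm}=e^{-2\pi ix_dh_\l}\F_z$ you use is correct but the factor should be $\chi$, not $\chi^2$, unless you adopt the paper's normalization of $\tilde\chi$; (b) for the boundary case $\Im z=0$ you invoke Sokhotski--Plemelj while the paper simply reads the bound off the uniform boundedness of the Fourier transform of $(\x_d\mp i0)^{-1}$; either is fine.
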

\begin{rem}
Let $0\leq \d\leq 1$. Combining $(\ref{games1})$ with $(\ref{games2})$, we have
\begin{align}\label{games3}
|\pa_{\x'}^{\a}(\c_{z,\pm}(\x',x_d)-\c_{w,\pm}(\x',x_d))|\leq C_{\a}^{1-s}(C_{\a}')^s(1+|x_d|)^{\d}|z-w|^{\d}.
\end{align}

\end{rem}

\begin{proof}
$(i)$ 
We follow the argument of the proof of \cite[(3.10)]{C2}. We may assume $0\leq \pm \Im z\leq 1$. First, we consider the case of $\pm \Im z=0$. In this case, the claim follows from the fact that
\begin{align*}
\|\int_{\re}\frac{e^{2\pi ix_d\x_d}}{\x_d\mp i0}d\x_d\|_{L^{\infty}(\re_{x_d})}<\infty
\end{align*}
and that $\tilde{\chi}$ is smooth with respect to $(\x, \x_d,\l)\in \re^d\times I$ and has a compact support with respect to $(\x',\x_d)$-variable which is bounded in $\l \in I$.

We take $\g\in C_c^{\infty}(\re, [0,1])$ such that $\g(\x_d)=1$ on $|\x_d|\leq 1$. We learn 
\begin{align*}
\c_{z,\pm}(\x',x_d)=\int_{\re}\frac{e^{2\pi i (\Im z) x_d\x_d}\tilde{\chi}(\x',(\Im z) \x_d,\l) }{\x_d-ib(\x',(\Im z) \x_d,\l)}d\x_d.
\end{align*}
We note that $\pa_{\x'}^{\a}\c(\x',x_d)$ is a linear combination of the form
\begin{align*}
\int_{\re}\frac{e^{2\pi i (\Im z) x_d\x_d}(\pa_{\x'}^{\a_0}\tilde{\chi})(\x',(\Im z) \x_d,\l)\prod_{j=1}^l(\pa_{\x'}^{\a_j}b)(\x',(\Im z)\x_d,\l) }{(\x_d-ib(\x',(\Im z) \x_d,\l))^l}d\x_d,
\end{align*}
where $l\geq 1$ is an integer and $\a_j\in \mathbb{N}^{d-1}$ for $j=0,...,l$. Applying Lemma \ref{leminte} $(i)$ if $l=1$ and $(ii)$ if $l>1$ with $\f(\x_d)=(\pa_{\x'}^{\a_0}\tilde{\chi})(\x', \x_d,\l)\prod_{j=1}^l(\pa_{\x'}^{\a_j}b)(\x',\x_d,\l)$, $a(\x_d)=b(\x',\x_d, \l)$ and $\m=\Im z$, we obtain $(\ref{games1})$ with $|\a|\geq 1$.

$(ii)$ We set $\l=\Re z$ and $\s=\Re w$.
 We take $0<\e$ such that 
\begin{align*}
\min_{(\x',\x_d)\in \supp \chi(\cdot, \cdot,\l), |z-w|\leq \d}|b(\x',\x_d,\s)|>0.
\end{align*}
Then we may assume $|z-w|<\e$.
In fact, in order to prove $(ii)$, we use $(i)$ if $|z-w|\geq \e$. Note that
\begin{align*}
\c_{z,\pm}(\x',x_d)-\c_{w,\pm}(\x',x_d)=&J_1(x_d)+J_2(x_d)+J_3(x_d),
\end{align*}
where we set
\begin{align*}
J_1(x_d)=&\int_{\re}e^{2\pi ix_d\x_d}(\frac{\tilde{\chi}(\x',\x_d,\l)}{\x_d-i(\Im z)b(\x',\x_d,\l)}-\frac{\tilde{\chi}(\x',\x_d,\l)}{\x_d-i(\Im w)b(\x',\x_d,\l) })d\x_d\\
J_2(x_d)=&\int_{\re}e^{2\pi ix_d\x_d}\frac{\tilde{\chi}(\x',\x_d,\l)-\tilde{\chi}(\x',\x_d,\s)}{\x_d-i(\Im w)b(\x',\x_d,\l)}d\x_d\\
=&\int_{\re}e^{2\pi i(\Im w) x_d\x_d}\frac{\tilde{\chi}(\x',(\Im w)\x_d,\l)-\tilde{\chi}(\x',(\Im w)\x_d,\s)}{\x_d-ib(\x',(\Im w)\x_d,\l)}d\x_d\\
J_3(x_d)=&\int_{\re}e^{2\pi ix_d\x_d}\tilde{\chi}(\x',\x_d,\s)(\frac{1}{\x_d-i(\Im w)b(\x',\x_d,\l)}-\frac{1}{\x_d-i(\Im w)b(\x',\x_d,\s)})d\x_d\\
=&\int_{\re}e^{2\pi i(\Im w)x_d\x_d}\frac{i\tilde{\chi}(\x',(\Im w)\x_d,\s)(b(\x',(\Im w)\x_d, \l)-b(\x',(\Im w)\x_d, \s)) }{(\x_d-ib(\x',(\Im w)\x_d,\l))(\x_d-ib(\x',(\Im w)\x_d,\s))}d\x_d.
\end{align*}
First, we estimate $J_2$. Similarly to the proof of $(i)$, $\pa_{\x'}^{\a}J_2(\x')$ is a finite sum of the form
\begin{align*}
\int_{\re}&\frac{e^{2\pi i (\Im w) x_d\x_d}((\pa_{\x'}^{\a_0}\tilde{\chi})(\x',(\Im w) \x_d,\l)-(\pa_{\x'}^{\a_0}\tilde{\chi})(\x',(\Im w) \x_d,\s)) }{(\x_d-ib(\x',(\Im w) \x_d,\l))^l}\\
&\times \prod_{j=1}^l(\pa_{\x'}^{\a_j}b)(\x',(\Im w)\x_d,\l)d\x_d,
\end{align*}
where $l\geq 1$ is an integer and $\a_j\in \mathbb{N}^{d-1}$ for $j=0,...,l$. We apply Lemma \ref{leminte} $(i)$ if $l=1$ and $(ii)$ $l\geq 2$ and obtain
\begin{align}\label{J_2}
|\pa_{\x'}^{\a}J_2(\x')|\leq C_{\a}'|z-w|
\end{align}
with $C_{\a}'>0$ independent of $x_d\in \re$, $\x_d\in \re^{d-1}$ and $z,w\in I_{\pm}$ with $|z-w|\leq \d$.

Next, we estimate $J_3$. $\pa_{\x'}^{\a}J_3$ is a linear combination of the form
\begin{align*}
\int_{\re}&\frac{e^{2\pi i (\Im w) x_d\x_d}(\pa_{\x'}^{\a_0}\tilde{\chi})(\x',(\Im w) \x_d,\s)\pa_{\x'}^{\a_2}(b(\x',(\Im w)\x_d, \l)-b(\x',(\Im w)\x_d, \s))  }{(\x_d-ib(\x',(\Im w) \x_d,\l))^{l_1}(\x_d-ib(\x',(\Im w)\x_d, \s))^{l_2}}\\
&\times\prod_{j=2}^{l_1+l_2+1}(\pa_{\x'}^{\a_j}b)(\x',(\Im w)\x_d,\l)d\x_d,
\end{align*}
where $l_1,l_2\geq 1$ are integers and $\a_j\in \mathbb{N}^{d-1}$ for $j=0,...,l_1+l_2+1$.
We apply Lemma \ref{leminte} $(iii)$ and obtain
\begin{align}\label{J_3}
|\pa_{\x'}^{\a}J_3(\x')|\leq C_{\a}'|z-w|
\end{align}
with $C_{\a}'>0$ independent of $x_d\in \re$, $\x_d\in \re^{d-1}$ and $z,w\in I_{\pm}$ with $|z-w|\leq \e$.

Finally, we estimate $J_1$.
Note that $|\pa_{\x'}^{\a}J_1(x_d)|\leq 2C_0$ by $(i)$. Thus it suffices to prove that $|\pa_{\x'}^{\a}J_1'(x_d)|\leq C_{\a}'|\Im z-\Im w|$. We learn
\begin{align*}
\frac{J_1'(x_d)}{2\pi i}=&\int_{\re}e^{2\pi ix_d\x_d}(\frac{\x_d\tilde{\chi}(\x',\x_d,\l)}{\x_d-i(\Im z)b(\x',\x_d,\l)}-\frac{\x_d\tilde{\chi}(\x',\x_d,\l)}{\x_d-i(\Im w)b(\x',\x_d,\l) })d\x_d\\
=&\int_{\re}e^{2\pi ix_d\x_d}\frac{i(\Im z-\Im w)\x_d\tilde{\chi}(\x',\x_d,\l)b(\x',\x_d,\l)}{(\x_d-i(\Im z)b(\x',\x_d,\l))(\x_d-i(\Im w)b(\x',\x_d,\l))}d\x_d\\
=&\int_{\re}e^{2\pi i(\Im w)x_d\x_d}\frac{i(\Im z-\Im w)\x_d\tilde{\chi}(\x',(\Im w)\x_d,\l)b(\x',(\Im w)\x_d,\l)}{(\x_d-i\frac{\Im z}{\Im w}b(\x',( \Im w)\x_d,\l))(\x_d-ib(\x',(\Im w)\x_d,\l))}d\x_d.
\end{align*}
Thus $\pa_{\x'}^{\a}J_1'(x_1)/(-2\pi |\Im z-\Im w|)$ is a linear combination of the form
\begin{align*}
(\frac{\Im z}{\Im w})^{l_1}\int_{\re}&e^{2\pi i(\Im w)x_d\x_d}\frac{\x_d\pa_{x'}^{\a_0}\tilde{\chi}(\x',(\Im w)\x_d,\l)\pa_{\x'}^{\a_2}b(\x',(\Im w)\x_d,\l)}{(\x_d-i\frac{\Im z}{\Im w}b(\x',( \Im w)\x_d,\l))^{l_1}(\x_d-ib(\x',(\Im w)\x_d,\l))^{l_2}}\\
&\times \prod_{j=2}^{l_1+l_2+1}\pa_{\x'}^{\a_j}b(\x',\x_d,\l) d\x_d,
\end{align*}
where $l_1,l_2\geq 1$ are integers, $\a_j\in \mathbb{N}^{d-1}$ for $j=1,...,l_1+l_2+1$. Applying Lemma \ref{leminte} $(i)$ and $(ii)$ with 
\begin{align*}
\f(\x_d)=(\Im z)^{l_1}\frac{\x_d\pa_{x'}^{\a_0}\tilde{\chi}(\x',(\Im w)\x_d,\l)\pa_{\x'}^{\a_2}b(\x',(\Im w)\x_d,\l)}{(\x_d-i(\Im z) b(\x',\x_d,\l))^{l_1}},
\end{align*}
$a(\x_d)=b(\x',\x_d,\l)$, $l=l_2$ and $\m=\Im w$, we have $|\pa_{\x'}^{\a}J_1'(x_d)|\leq C_{\a}'|\Im z-\Im w|$. This completes the proof.

\end{proof}

\section{Complex analysis}

We define $\log^+t=\log t$ if $1\leq t$, $\log^+t=0$ if $0<t\leq 1$ and $\log^{-}t=\log t-\log^+t$. 

\begin{lem}\label{Jen}
Let $f:\{z\in \mathbb{C}\mid |z|\leq 1\}\to \mathbb{C}$ be a continuous function which is holomorphic on $\{|z|<1\}$ and has no zero on $\{|z|<1\}$. Then $f(e^{i\theta})\neq 0$ for almost everywhere $\theta\in [-\pi,\pi)$.

\end{lem}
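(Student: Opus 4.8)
The plan is to deduce the statement from Jensen's formula (equivalently, the mean value property for the harmonic function $\log|f|$) combined with Fatou's lemma. First I would note that, since $f$ has no zero in $\{|z|<1\}$, it is not identically zero and in particular $f(0)\neq 0$; moreover $\log|f|$ is harmonic on $\{|z|<1\}$, because locally $f=e^{g}$ with $g$ holomorphic and hence $\log|f|=\Re g$. The mean value property then gives
\begin{align*}
\log|f(0)|=\frac{1}{2\pi}\int_{-\pi}^{\pi}\log|f(re^{i\theta})|\,d\theta,\qquad 0<r<1.
\end{align*}

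Next, since $f$ is continuous on the compact set $\{|z|\leq 1\}$, it is bounded, say $|f|\leq M$ there. Writing $\log t=\log^{+}t+\log^{-}t$ with $\log^{-}t\leq 0$, monotonicity of $\log^{+}$ gives $\log^{+}|f(re^{i\theta})|\leq \log^{+}M$, hence $\int_{-\pi}^{\pi}\log^{+}|f(re^{i\theta})|\,d\theta\leq 2\pi\log^{+}M$. Combining this with the displayed identity yields
\begin{align*}
\int_{-\pi}^{\pi}\bigl|\log^{-}|f(re^{i\theta})|\bigr|\,d\theta=\int_{-\pi}^{\pi}\log^{+}|f(re^{i\theta})|\,d\theta-2\pi\log|f(0)|\leq 2\pi\log^{+}M-2\pi\log|f(0)|=:C,
\end{align*}
with $C<\infty$ independent of $r\in(0,1)$.

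Finally, I would let $r\to 1^{-}$. By uniform continuity of $f$ on $\{|z|\leq 1\}$ we have $f(re^{i\theta})\to f(e^{i\theta})$ uniformly in $\theta$, so the nonnegative functions $-\log^{-}|f(re^{i\theta})|$ converge pointwise (with values in $[0,+\infty]$) to $-\log^{-}|f(e^{i\theta})|$. Fatou's lemma then gives $\int_{-\pi}^{\pi}\bigl|\log^{-}|f(e^{i\theta})|\bigr|\,d\theta\leq C<\infty$, so $\log^{-}|f(e^{i\theta})|>-\infty$, i.e.\ $f(e^{i\theta})\neq 0$, for almost every $\theta\in[-\pi,\pi)$. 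The argument is entirely routine; the only point requiring a little care is the passage to the limit $r\to 1$, where one must keep track that $\log^{-}$ takes the value $-\infty$ at $0$ and invoke Fatou's lemma rather than dominated convergence.
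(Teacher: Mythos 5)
Your argument is correct and follows essentially the same approach as the paper: harmonicity of $\log|f|$ on the open disk gives the mean value identity, a uniform-in-$r$ bound on the $\log^{+}$ integral forces a uniform bound on the $\log^{-}$ integral, and Fatou's lemma as $r\to1^{-}$ gives $\log|f(e^{i\theta})|\in L^1$, hence $f(e^{i\theta})\neq0$ for a.e.\ $\theta$. The only small difference is in bounding $\int_{-\pi}^{\pi}\log^{+}|f(re^{i\theta})|\,d\theta$: the paper uses $x\le e^{x}$ together with Jensen's inequality to compare it with $\int|f(re^{i\theta})|\,d\theta$, whereas you simply observe that continuity on the closed disk makes $f$ bounded, so $\log^{+}|f|\le\log^{+}M$ pointwise, which is a cleaner way to get the same uniform bound.
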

\begin{proof}
We follow the argument of \cite[Theorem 17.17]{Rud}.
By the mean value properties of the harmonic function, we have
\begin{align}\label{log1}
\log|f(0)|=&\frac{1}{2\pi}\int_{-\pi}^{\pi}\log|f(re^{i\theta})|d\theta\\
=&\frac{1}{2\pi}\int_{-\pi}^{\pi}\log^{+}|f(re^{i\theta})|d\theta-\frac{1}{2\pi}\int_{-\pi}^{\pi}\log^{-}|f(re^{i\theta})|d\theta \nonumber
\end{align}
for $0<r<1$. On the other hand, by using $x\leq e^x$ for $x\in \re$ and Jensen's inequality, we have
\begin{align*}
\frac{1}{2\pi}\int_{-\pi}^{\pi}\log^{+}|f(re^{i\theta})|d\theta\leq& \exp(\frac{1}{2\pi}\int_{-\pi}^{\pi}\log^{+}|f(re^{i\theta})|d\theta)\\
\leq&\frac{1}{2\pi}\int_{-\pi}^{\pi}|f(re^{i\theta})|d\theta.
\end{align*}
By Fatou's lemma and $(\ref{log1})$, we obtain $\log|f(e^{i\theta})|\in L^1([-\pi,\pi))$. In particular, $\log|f(e^{i\theta})|<\infty$ for almost everywhere $\theta\in [-\pi,\pi)$. Thus $f(e^{i\theta})\neq 0$ for almost everywhere $\theta\in [-\pi,\pi)$.

\end{proof}

\begin{cor}\label{cpxcor}
Let $J=(a,b)$ be an open interval and $r=(b-a)/2$. 
Let $f:\{z\in \mathbb{C}\mid |z-(a+b)/2|\leq r,\, \pm \Im z\geq 0\}\to \mathbb{C}$ be a continuous function which is holomorphic and has no zero on $\{|z-(a+b)/2|<r, \Im z>0\}$. Then $f(\l)\neq 0$ for almost everywhere $\l\in J$.
\end{cor}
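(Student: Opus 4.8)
The plan is to reduce the statement to Lemma \ref{Jen} by conformally mapping the half-disk onto the unit disk and transporting $f$ through that map.

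First I would normalize. The real-affine bijection $\lambda\mapsto (2\lambda-a-b)/(b-a)$ sends $J=(a,b)$ onto $(-1,1)$ and the closed half-disk $\{|z-(a+b)/2|\le r,\ \pm\Im z\ge 0\}$ onto $\overline{\Omega^{\pm}}$, where $\Omega^{+}=\{z\in\co : |z|<1,\ \Im z>0\}$ and $\Omega^{-}=\{z : |z|<1,\ \Im z<0\}$; it preserves continuity, holomorphy and zero sets, and carries Lebesgue-null subsets of $J$ to Lebesgue-null subsets of $(-1,1)$ and back. The case of $\Omega^{-}$ follows from that of $\Omega^{+}$ by replacing $f$ with $\overline{f(\bar z)}$, which is continuous on $\overline{\Omega^{+}}$, holomorphic and zero-free on $\Omega^{+}$, and agrees with $\overline{f}$ on $(-1,1)$. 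Hence it suffices to show: if $f$ is continuous on $\overline{\Omega^{+}}$, holomorphic and zero-free on $\Omega^{+}$, then $f(\lambda)\ne 0$ for a.e.\ $\lambda\in(-1,1)$.

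Next I would exhibit an explicit conformal equivalence. The Joukowski map $\psi(z)=\tfrac12(z+z^{-1})$ is a biholomorphism of $\Omega^{+}$ onto the lower half-plane $\{\Im w<0\}$ and extends to a homeomorphism of $\overline{\Omega^{+}}$ onto the closure of the lower half-plane in the Riemann sphere (with $\psi(0)=\infty$): it maps the upper unit semicircle onto $[-1,1]$ and the segment $[-1,1]$ onto $(\re\setminus(-1,1))\cup\{\infty\}$. Composing with a M\"obius transformation of the lower half-plane onto the open unit disk $\mathbb{D}$ produces a map $\Phi$ which is a homeomorphism of $\overline{\Omega^{+}}$ onto $\overline{\mathbb{D}}$, is biholomorphic from $\Omega^{+}$ onto $\mathbb{D}$, is real-analytic on the open segment $(-1,1)$ with $\Phi'$ nowhere zero there, and carries $(-1,1)$ onto an open circular arc $A^{\circ}\subset\partial\mathbb{D}$ of positive length. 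Then $g:=f\circ\Phi^{-1}$ is continuous on $\overline{\mathbb{D}}$, holomorphic on $\mathbb{D}$ and zero-free on $\mathbb{D}$, so Lemma \ref{Jen} gives $g(e^{i\theta})\ne 0$ for a.e.\ $\theta$, hence $g\ne 0$ a.e.\ on $A^{\circ}$. Finally, since $\Phi|_{(-1,1)}$ is a $C^{1}$-diffeomorphism onto $A^{\circ}$ whose derivative is bounded above and below on every compact subinterval of $(-1,1)$, it maps null sets to null sets in both directions; therefore $\{\lambda\in(-1,1): f(\lambda)=0\}=\Phi^{-1}(\{w\in A^{\circ}: g(w)=0\})$ has measure zero, which is the claim.

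I expect the main point needing care to be the boundary behaviour of $\Phi$: that it extends to a homeomorphism of the closed half-disk onto the closed disk (needed so that continuity of $f$ up to the boundary transfers to $g$) and that it is smooth with nonvanishing derivative along the flat edge $J$ (needed so that the almost-everywhere statement transfers). Both follow from the explicit formulas for $\psi$ and the M\"obius factor; the only delicate spots are the two corners $z=\pm1$, where $\psi'$ vanishes so that $\Phi$ merely compresses the boundary (harmless for transferring null sets, and handled for positive-measure sets by exhausting $(-1,1)$ by compact subintervals away from the corners), and the point $z=0\in\overline{\Omega^{+}}$, where $\psi$ has a pole but $\Phi$ is nevertheless holomorphic with $\Phi'(0)\ne0$. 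As an alternative one could bypass conformal mapping and run the Jensen-type mean value argument of Lemma \ref{Jen} directly on $\overline{\Omega^{+}}$ with the harmonic measure of the half-disk in place of normalized arc length, but reducing to Lemma \ref{Jen} is the shorter route.
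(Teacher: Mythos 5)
Your proposal is correct and follows essentially the same route as the paper: conformally map the closed half-disk onto the closed unit disk by a map that is biholomorphic on the interior and smooth with nonvanishing derivative along the flat edge, apply Lemma \ref{Jen} to the transported function, and pull the null set back. The only difference is cosmetic: you use the Joukowski map $\tfrac12(z+z^{-1})$ followed by a M\"obius transformation (so the ``bad'' boundary point where the intermediate map has a pole is $z=0$, an interior point of the flat edge), while the paper uses $\k_1(z)=(1+z)^2/(1-z)^2$ composed with the Cayley transform (so the pole sits at the corner $z=1$); both give a map whose restriction to $(-1,1)$ is a real-analytic diffeomorphism onto an arc, hence preserves Lebesgue-null sets, which is all that is needed.
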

\begin{proof}
For simplicity, we assume $a=-1$ and $b=1$. 
Define $\k_1:D=\{|z|<1, \Im z>0\}\to \{\Im z>0\}$ and $\k_2:\{\Im z>0\}\to \{|z|<1\}$ by $\k_1(z)=(1+z)^2/(1-z)^2$ and $\k_2(z)=(z-i)/(z+i)$. Then $\k=\k_2\circ\k_1$ is biholomorphic from $\{|z|<1, \Im z>0\}$ to $\{|z|<1\}$ and  homeomorphic from $\{|z|\leq 1, \Im z\geq 0\}$ to $\{|z|\leq 1\}$. Moreover, since 
\begin{align*}
\k^{-1}(w)=\frac{\sqrt{i\frac{1+w}{1-w}}-1}{\sqrt{i\frac{1+w}{1-w}}+1}
\end{align*}
where we take a branch such that $\Im \sqrt{z}>0$, then $\k^{-1}|_{|z|=1}:\{|z|=1\}\to \bar{D}\setminus D$ is H\"older continuous. Thus $\k^{-1}|_{|z|=1}$ maps sets of Lebesgue measure zero to sets of Lebesgue measure zero. By Lemma \ref{Jen}, we obtain the desired result.
\end{proof}

Next proposition is a variant of \cite[Lemma 4.20]{KK}. See also \cite[Proposition 4.6]{KM}.

\begin{prop}\label{cpxprop}
Let $Z$ be a Banach space and fix a sgnature. For $J\subset \re$ be an open set, we denote $J_{\pm}=\{z\in \mathbb{C}\mid \Re z\in J, \pm\Im z\geq 0\}$. Let $K:J_{\pm}\to B_{\infty}(Z)$ be continuous and holomorphic on $\{\pm\Im z>0\}$. If $I+K(z)$ has a inverse in $B(Z)$ for each $z\in \{\pm \Im z>0\}$, then $\Gamma_0=\{\l\in \re\mid I+K(\l)\,\text{is not invertible}\}$ is a closed set with Lebesgue measure zero.
\end{prop}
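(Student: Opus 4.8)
The plan is to reduce, locally near each point of $J$, the invertibility of $I+K(z)$ to the non-vanishing of a single scalar function that is continuous on a closed half-disk and holomorphic on the open half-disk, and then to invoke Corollary \ref{cpxcor}. That $\Gamma_0$ is relatively closed in $J$ is immediate: the set of invertible elements of $B(Z)$ is open and $z\mapsto I+K(z)$ is continuous on $J_{\pm}$, so $\{z\in J_{\pm}\mid I+K(z)\ \text{is invertible}\}$ is relatively open in $J_{\pm}$; intersecting with $\re$ shows that $\Gamma_0$ is relatively closed in $J$ (and closed when $J=\re$).

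For the measure-zero statement, fix $\lambda_0\in J$. Since $K(\lambda_0)\in B_{\infty}(Z)$, choose a finite-rank operator $F$ with $\|K(\lambda_0)-F\|<1/2$; if $F=0$ already works then $I+K(z)$ is invertible near $\lambda_0$ by a Neumann series and there is nothing to prove, so assume $n\coloneqq \mathrm{rank}\,F\ge 1$. By continuity there is a relatively open neighbourhood $N$ of $\lambda_0$ in $J_{\pm}$ on which $\|K(z)-F\|<1$, so $A(z)\coloneqq I+(K(z)-F)$ is invertible in $B(Z)$ with $A(z)^{-1}$ continuous on $N$ and holomorphic on $N^{\pm}\coloneqq N\cap\{\pm\Im z>0\}$ (inversion is holomorphic on the invertibles of $B(Z)$). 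From the identity
\[
I+K(z)=(I+FA(z)^{-1})\,A(z),
\]
$I+K(z)$ is invertible if and only if $I+\widetilde G(z)$ is, where $\widetilde G(z)\coloneqq FA(z)^{-1}$ is finite rank with $\Ran \widetilde G(z)\subseteq V\coloneqq \Ran F$, a \emph{fixed} $n$-dimensional subspace. Writing $Fg=\sum_{j=1}^n m_j(g)\,v_j$ with $\{v_j\}$ a basis of $V$ and $m_j\in Z^{*}$, we get $\widetilde G(z)f=\sum_{j=1}^n \ell_j(z)(f)\,v_j$ with $\ell_j(z)\coloneqq m_j\circ A(z)^{-1}\in Z^{*}$ continuous on $N$ and holomorphic on $N^{\pm}$. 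Since $\widetilde G(z)$ is compact and $\Ran\widetilde G(z)\subseteq V$, any $f\in\Ker(I+\widetilde G(z))$ lies in $V$; hence $I+\widetilde G(z)$ is invertible iff it is injective iff its restriction to $V$ is injective iff
\[
D(z)\coloneqq \det\big((\delta_{ij}+\ell_i(z)(v_j))_{i,j=1}^{n}\big)\neq 0 .
\]
Thus $D$ is continuous on $N$, holomorphic on $N^{\pm}$, and $D(z)\neq 0$ for $z\in N^{\pm}$ by hypothesis.

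Finally, choose $r>0$ with $(\lambda_0-r,\lambda_0+r)\subseteq J$ and $\{|z-\lambda_0|\le r\}\cap J_{\pm}\subseteq N$, and apply Corollary \ref{cpxcor} to $D$ on the half-disk of radius $r$ centred at $\lambda_0$ (for the signature $-$, apply it instead to $z\mapsto\overline{D(\bar z)}$, which is continuous on the closed upper half-disk, holomorphic and nonvanishing on the open one, and has the same zero set on $\re$). This yields $D(\lambda)\neq 0$, hence $I+K(\lambda)$ invertible, for a.e.\ $\lambda\in(\lambda_0-r,\lambda_0+r)$. Therefore $\Gamma_0$ has Lebesgue measure zero in a neighbourhood of each point of $J$, and covering $J$ by countably many such intervals gives $|\Gamma_0|=0$. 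The delicate point is this finite-rank reduction: one must use the factorization $I+K(z)=(I+FA(z)^{-1})A(z)$ rather than $A(z)(I+A(z)^{-1}F)$, precisely so that the residual finite-rank operator $\widetilde G(z)$ has range in a subspace independent of $z$ — this is what converts invertibility into the non-vanishing of a single holomorphic determinant, to which Corollary \ref{cpxcor}, the only available bridge between the open half-disk and its real boundary, can be applied.
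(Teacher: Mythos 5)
Your proof is correct in spirit and reaches the same conclusion, but it takes a genuinely different route from the paper to the finite-dimensional determinant. The paper's reduction uses the Riesz spectral projection $P_z=\frac{1}{2\pi i}\int_{C_\lambda}(w-K(z))^{-1}\,dw$ onto the (finite-dimensional) part of the spectrum of $K(z)$ near $-1$, then conjugates $I+K(z)$ restricted to $\Ran P_z$ back to a fixed reference space $Z_\lambda=\Ran P_\lambda$ via the isomorphisms $\Theta_z=P_z|_{Z_\lambda}$, and takes the determinant of the resulting matrix $X(z)$. You instead approximate the compact operator $K(\lambda_0)$ by a finite-rank $F$ with $\|K(\lambda_0)-F\|<1/2$, factor $I+K(z)=(I+FA(z)^{-1})A(z)$ with $A(z)=I+(K(z)-F)$ invertible by a Neumann series, and observe that the residual operator $\widetilde G(z)=FA(z)^{-1}$ has range in the \emph{fixed} subspace $V=\Ran F$; this is the standard analytic-Fredholm-theorem reduction. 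Your approach is more elementary in that it avoids the contour-integral spectral calculus and the technical step of checking that $\Theta_z$ is an isomorphism for $z$ near $\lambda_0$; the remark about choosing the factorization $(I+FA(z)^{-1})A(z)$ rather than $A(z)(I+A(z)^{-1}F)$ is precisely the point that makes $V$ independent of $z$, and is well taken.

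There is, however, one genuine gap relative to the proposition as stated. You begin by choosing a finite-rank $F$ with $\|K(\lambda_0)-F\|<1/2$. This requires $K(\lambda_0)$ to be a norm-limit of finite-rank operators, which holds for every compact operator on a Hilbert space, but not on an arbitrary Banach space: by Enflo's theorem there exist Banach spaces $Z$ (even separable, reflexive ones) lacking the approximation property, on which some compact operators are not approximable by finite-rank ones. Since the proposition is stated for a general Banach space $Z$, your proof silently strengthens the hypothesis to ``$Z$ has the approximation property.'' The paper's route via Riesz projections avoids this: the Riesz theory of compact operators on an arbitrary Banach space already guarantees that the spectral projection onto the generalized eigenspace at $-1$ is finite rank, with no approximation property needed. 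In the paper's applications $Z$ is $L^2(\ze^d)$ or $L^2(\re^d)^n$, so your argument suffices there; but if the proposition is to be proved at the stated level of generality, you should either assume the approximation property explicitly, or replace the finite-rank approximation step by the Riesz projection argument.

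Two smaller remarks. First, the reflection $z\mapsto\overline{D(\bar z)}$ you use for the $-$ signature is unnecessary, since Corollary \ref{cpxcor} is already formulated for both half-disks; it does no harm, but it can be dropped. Second, after obtaining the finite matrix it is worth saying one more sentence, as you essentially do, that $I+\widetilde G(z)$ is a Fredholm operator of index $0$ (being identity plus finite rank), so injectivity is equivalent to invertibility; without this the passage from $D(z)\neq 0$ to invertibility of $I+K(z)$ on all of $Z$ is incomplete.
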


\begin{proof}
Since the set of all invertible operators in $B(Z)$ is open and since $K$ is continuous, then $\Gamma_0$ is closed. Thus it suffices to prove that the Lebesgue measure of $\Gamma_0$ is zero. 
Note that $I+K(\l)$ is not invertible if and only if $-1$ is in the spectrum of $K(\l)$ for $\l\in \Gamma_0$. Fix $\l\in \Gamma_0$. Since $K(\l)$ is compact, there exists a circle $C_{\l}$ enclosing $-1$ such that $C_{\l}$ is contained in the resolvent set of $K(\l)$. Since $K$ is continuous, there exists $r_{\l}>0$ such that $C_{\l}$ is contained in the resolvent set of $K(z)$ for $z\in \overline{B_{r_\l}^{\pm}(\l)}$ where $B_{r_\l}^{\pm}(\l)=\{z\in \mathbb{C}\mid \pm\Im z\geq 0,\, |z-\l|<r_{\l}\}$. We define
\begin{align*}
P_z=\frac{1}{2\pi i}\int_{C_{\l}}(w-K(z))^{-1}dw,
\end{align*}
then $z\in \overline{B_{r_\l}^{\pm}(\l)}\mapsto P_z\in B(Z)$ is analytic in $B_{r_\l}^{\pm}(\l)\setminus \re$ and continuous in $B_{r_\l}^{\pm}(\l)$. Note that $n_0=\dim \Ran P_{z}<\infty$ is independent of $z\in \overline{B_{r_\l}^{\pm}(\l)}$. Set $Z_z=\Ran P_z$ and fix a linear isomorphism $\Pi_{\l}:\mathbb{C}^{n_0}\to Z_{\l}$. We choose $r_{\l}$ smaller such that $I+P_{\l}(P_z-P_{\l})$ has an inverse in $B(Z_{\l})$. Then $\Theta_{z}=P_{z}|_{Z_{\l}}:Z_{\l}\to Z_z$ is a linear isomorphism with its inverse
\begin{align*}
(I+P_{\l}(P_z-P_{\l}))^{-1}P_{\l}:Z_z\to Z_{\l}.
\end{align*}
Now we set 
\begin{align*}
X(z)=\Pi_{\l}^{-1}\Theta_z^{-1}(I+K(z))\Theta_z\Pi_{\l}
\end{align*}
for $z\in \overline{B_{r_\l}^{\pm}(\l)}$. Then $X$ is continuous on $\overline{B_{r_\l}^{\pm}(\l)}$ and analytic in $B_{r_\l}^{\pm}(\l)$. Moreover, $\det X(z)$ is also continuous on $\overline{B_{r_\l}^{\pm}(\l)}$ and analytic in $B_{r_\l}^{\pm}(\l)$. We note that $\det X(z)=0$ if and only if $-1$ is in the spectrum of $K(z)$. By Corollary \ref{cpxcor} and the compactness argument, we conclude that the Lebesgue measure of $\Gamma_0$ is zero.

\end{proof}

\end{document}